\documentclass[reqno,10pt,a4paper]{amsart}
\RequirePackage{tikz-cd}
  \usepackage{amsmath}
\usepackage{amssymb}
\usepackage{latexsym}
\usepackage{epsf}
\usepackage{youngtab}
\usepackage{graphics}
\usepackage{tikz}
 \usepackage{comment}
\usetikzlibrary{decorations.pathreplacing}
\usepackage[dvips]{rotating}

\usepackage[all]{xy}
\usepackage{multicol}
\usepackage{hyperref} 
\usepackage[normalem]{ulem}  

\usepackage{bbm} 

\UseComputerModernTips
\CompileMatrices

\DeclareMathOperator{\Hom}{Hom}

\DeclareMathOperator{\End}{End}
\renewcommand{\ge}{\geqslant}

\newcommand{\U}{\mathcal{U}}

\newcommand{\C}{\mathbb{C}}
\newcommand{\QQ}{\mathbb{Q}}  
\newcommand{\N}{\mathbb{N}}

\newcommand{\Z}{\mathbb{Z}}

\DeclareMathOperator{\Ind}{Ind}
\DeclareMathOperator{\Res}{Res}

\newcommand{\rb}[2]{\raisebox{#1}{#2}}
\newcommand{\ig}{\includegraphics}

\newcommand{\beq}{\begin{equation}}
\newcommand{\eq}{\end{equation}} 

\newcommand{\blue}[1]{\textcolor{blue}{#1}}
\newcommand{\bluex}[1]{}
\newcommand{\red}[1]{\textcolor{red}{#1}}
\newcommand{\redx}[1]{}
\newcommand{\ppm}[1]{\red{#1}}
\newcommand{\ppmx}[1]{} 
\newcommand{\ppmy}[1]{}   
\newcommand{\ppmm}[1]{{#1}}
\newcommand{\ca}[1]{\blue{#1}}
\newcommand{\caa}[1]{{#1}}
\newcommand{\kn}{k}
\newcommand{\nk}{n}  
\newcommand{\dl}{d}  
\newcommand{\dell}{\dl}  
\newcommand{\PP}{P_\nk^\dl(\delta)} 
\newcommand{\spine}{\mathcal{S}_n(\emptyset,\emptyset)}
\newcommand{\Sym}{\mathfrak{S}}
\newcommand{\cell}{\mathcal{S}}
\newcommand{\SSS}{\mathcal{S}} 
\newcommand{\T}{T^2}  
\newcommand{\ignore}[1]{}  

\newcommand{\rmP}{{\mathtt P}} 
\newcommand{\rmPe}{{\rmP}^{(e)}} 

\newcommand{\Part}{\rmP}  

\newcommand{\confer}[1]{$\;$(Cf. (#1).)}  

\newcommand{\Gram}{\Gamma}

\begin{document}
\theoremstyle{plain}
\newtheorem{thm}{Theorem}[section]
\newtheorem{prop}[thm]{Proposition}
\newtheorem{cor}[thm]{Corollary}
\newtheorem{clm}[thm]{Claim}
\newtheorem{lem}[thm]{Lemma}
\newtheorem{conj}[thm]{Conjecture}
\theoremstyle{definition}
\newtheorem{defn}[thm]{Definition}
\newtheorem{rem}[thm]{Remark}
\newtheorem{eg}[thm]{Example}
\newtheorem*{rem*}{Remark}



\newcounter{minidef}[section]
\renewcommand{\theminidef}{\thesection.\arabic{minidef}}
\newcommand{\mdef}{\refstepcounter{thm} 
\medskip \noindent ({\bf \thethm}) }

\newcounter{minicapt}
\newenvironment{capt}[1]
{\refstepcounter{minicapt}\stepcounter{figure}
\begin{center}\sf Figure \theminicapt. }{ \end{center}}

\newcommand{\soutx}[1]{}  


\title%
{Semisimplicity criterion for 
$2$-tonal Partition algebras}
\author{C. A.  Ahmed }
\author{G.  M. Benkart$^\dagger$}
\thanks{$\dagger$ Sadly our friend and lead-collaborator passed on before this write-up of our results was complete.} 
\author{O. H. King \and  P. P. Martin
\and A. E. Parker} 
\address{Department of Mathematics \\ University of Wisconsin-Madison
  \\ Madison
   \\ USA}
\address{Department of Mathematics \\ University of Leeds \\ Leeds,
  LS2 9JT \\ UK}
  \address{Department of Mathematics \\ College of Science\\ University of Sulaimani\\ Sulaymaniyah,
  Kurdistan Region \\ Iraq}
\email{chwas.ahmed@univsul.edu.iq}
\email{oking@dhfs.uk}  
\email{p.p.martin@leeds.ac.uk}
\email{A.E.Parker@leeds.ac.uk}

\newcommand{\heredity}{Brauer contravariant}

\begin{abstract}
We determine the semisimplicity criterion for even partition algebras over the complex field. 
Specifically we 
prove that the 
even/2-tonal    
partition algebras $P^2_n(\delta)$ over $\C$ are semisimple for all $n$ if and only if parameter $\delta \not\in \N_0$.

\soutx{ 
In passing we take the opportunity to develop more formally than usual 
(cf. e.g. 
\mbox{\cite{JamesMurphy79,MartinSaleur94b}}) 
the toolkit of 
`\heredity' 
forms (their gram matrices, Smith forms and so on). 
}
\end{abstract}

\maketitle






\newcommand{\tab}[1]{#1}
\newcommand{\taboo}[1]{#1}

\vspace{1cm}

\tableofcontents 

\vspace{1cm}

\section{Outline}  

\taboo{Partition algebras and categories are central to several areas of modern mathematics. Originally coming from computational statistical mechanics \cite{Martin91,MartinSaleur93,Martin94}, they have been studied from the perspective of    Schur--Weyl duality (see for example \cite{Jones94,benkart2019partition}),    
combinatorics 
\cite{Halverson05,orellana2007partition};  
representation theory (including Kazhdan--Lusztig theory \cite{MartinWoodcock98},  
geometric representation theory, symmetric group representation theory \cite{MartinWoodcock98,Deligne07,KhovanovSazdanovic20});   
topological quantum field theory \cite{AlvarezMartin07,Comes20,KhovanovSazdanovic20};  
probability \cite{VershikNikitin06}; 
geometric complexity \cite{bowman2013partitionalgebrakroneckercoefficients}  and many other areas. } 
The tonal partition algebras are  subalgebras of the partition algebras 
\taboo{also} 
with many intriguing properties.
They 
\taboo{too}
have been studied from the perspective of Schur--Weyl duality (see for example 
\cite{
kosuda5party,
Kosuda08,
orellana2007partition,tanabe1997centralizer}, and cf. e.g. \cite{Bloss03});
combinatorics (e.g. \cite{amm1,orellana2007partition}) 
and representation theory (e.g. \cite{amm2019tonal,Kosuda6Irr}), but many features around 
modular and geometric 
representation theory, 
statistical mechanics
and topological quantum field theory 
(cf. e.g. \cite{
AlvarezMartin07} 
and references therein%
\footnote{Since the work reported here was completed, the useful literature on 
ordinary partition categories and TQFT 
has grown further. In this and other cases where updated references might help the reader, we may add them 
directly, or 
in footnote. Here for example we can add \cite{KhovanovSazdanovic20}.})
remain relatively unexplored. 
Their connections to statistical mechanics are significantly less straightforward than the partition algebra itself (motivations around block-spin renormalisation for the Potts model, for example, cf. \cite{
banjo13,Bloss03,Martin91,MartinElgamal04
}). 
However they do inherit from the partition algebra the `thermodynamic limit' properties 
(in particular of the representation category, as for example in \cite{Martin94}) 
that make many aspects of representation theory tractable, not to mention beautiful. 
Historically one of the first questions to be 
resolved 
for the partition algebra 
$P_n(\delta)$ 
over $\C$ 
was semisimplicity criteria with respect to the parameter $\delta$. 
The proof 
(in  \cite{MartinSaleur94b})
relies on the tractability of a nice combinatorial problem in the Bell number/Stirling number realm. 
A strategy for addressing the problem for tonal partition algebras follows the same line, thus meeting a commensurately harder combinatorial problem.
In particular this is the situation for the simplest new case --- the 2-tonal 
(or even) case
--- the restriction to even partitions.
Fortunately both this combinatorial problem has been solved --- by Benkart and Moon \cite{benkart2017walks}; and the `luck' needed for the bound this provides to be saturated, holds out. In this paper we give the proof.

\medskip 

The restriction of non-semisimplicity to integral $\delta$ cases is 
significant for a number of reasons. For example in the ordinary partition algebra case this can be seen as 
a 
sign of the relevance of alcove-geometric methods and Kazhdan--Lusztig theory 
\cite{MartinWoodcock98}.
When these apply, 
the geometrical features tend to rigidify the possibilities for non-semisimple $\delta$ - in general not necessarily to integers in a given parameterisation, but for example to roots of unity, as classically for Lie Theory and quantum groups \cite{AndersenJantzenSoergel94,Soergel97a}. 
This 
can  
be compared for example with the Kadar--Yu algebras \cite{KadarMartinYu12} -- superficially similar constructions, but where it is known that the range of non-semisimple cases is 
significantly  
wilder, and where there is correspondingly no known role for Kazhdan--Lusztig theory in general. 

\medskip 

We follow 
the method of 
\cite{MartinSaleur94b} 
where the semisimplicity criterion is given
for the partition algebras,
lifting 
lemmas where necessary to $\dell$-tonal partition algebras  
$\PP \;$
(either for all $d$ or for $d=2$).
We thus obtain the criterion 
for tonality case $d=2$, formulated 
as in Theorem~\ref{th:main}
below.

\medskip 

We consider the $\dl$-tonal partition algebras
over the complex field, 
organised as in the statistical mechanical context of the partition algebra, thus  
firstly fixing the tonality $\dl$ and 
parameter $\delta$, 
and then considering the collection of these algebras over all ranks $\nk$
(corresponding to taking the thermodynamic limit of a fixed physical model \cite{Martin91}). 
It is 
relatively 
easy to show 
(for example using the Potts functor \cite{Martin91,Martin08a}, as  reviewed in \S\ref{ss:PottsFunctors})
that this 
fixed $\delta$ collection is eventually 
non-semisimple,
i.e. for some rank and then every larger rank,
if $\delta \in \N$,
and for every positive rank of form $\dl  n$  if $\delta=0$
(see below for a proof and references).
Here we answer, 
in the negative, 
the 
harder question of whether there are any 
other non-semisimple cases  
for tonality $\dl =2$.
Apart from leaning on extensive representation theoretic machinery, 
the proof leans heavily 
for inspiration 
on 
the 
beautiful algebraic-combinatorial 
results of Benkhart and Moon \cite{benkart2012planar,benkart2017walks},
which 
appropriately 
generalise a key result needed for the proof of the 
ordinary partition algebra case in \cite{MartinSaleur94b}.

\medskip 


\begin{figure} 
\[
\includegraphics[width=12.8cm]{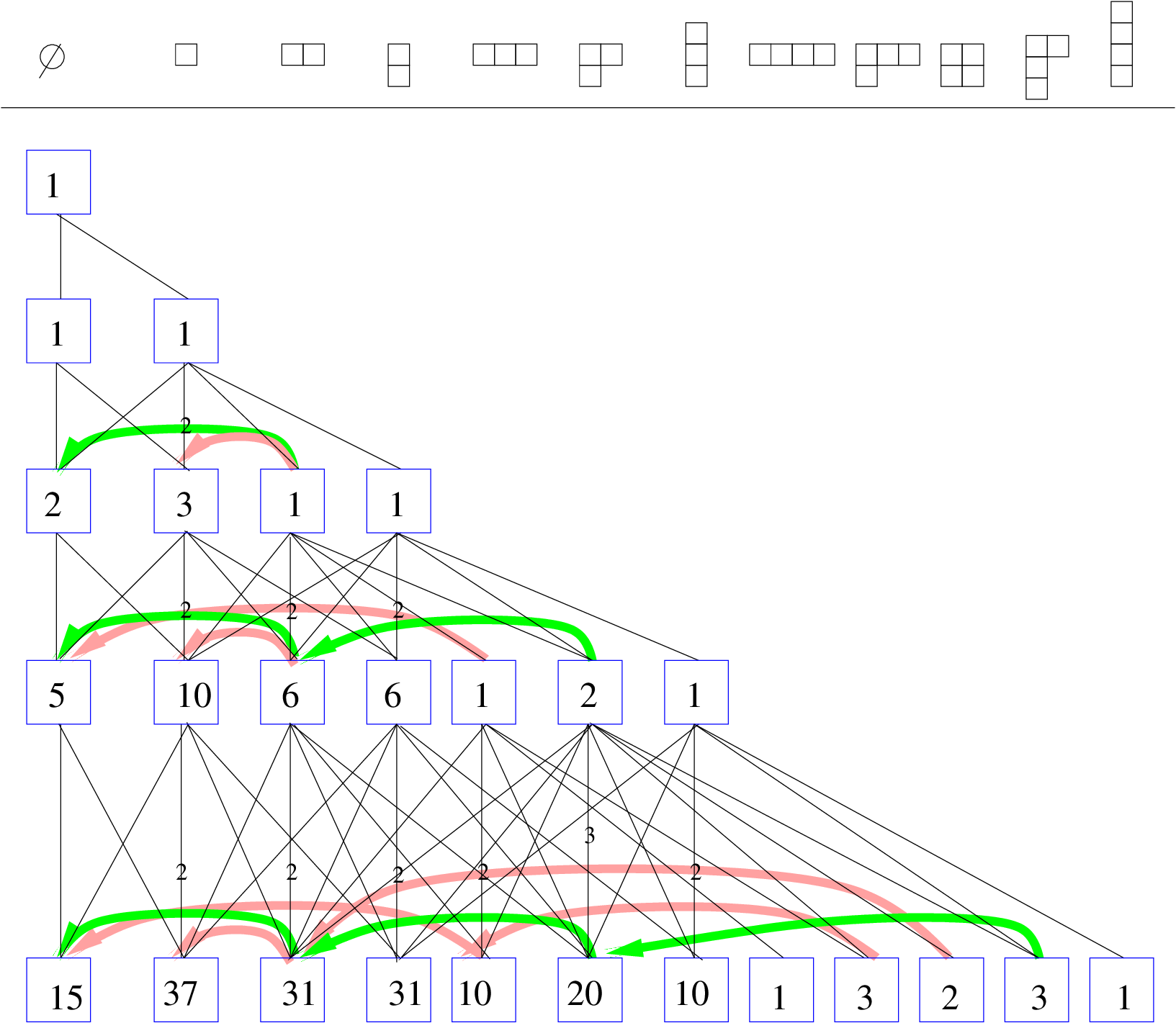}
\]
\caption{Augmented Bratelli diagram for 
algebras 
$P_0 \subset P_1 \subset ...\subset P_4$. 
Vertices are standard modules with index as shown at the top of their column; and dimension shown in the box. Black edges indicate restriction rules, with multiplicities, so dimensions can be checked.
\textcolor{green}{Green} arrows indicate module morphisms for $\delta=1 \in \C$
(see main text for commentary).
\textcolor{pink}{Pink} arrows indicate  morphisms for $\delta=2$.
(Morphisms for other $\delta$s 
omitted 
to avoid clutter.)
\label{fig:PnBratelli}}
\end{figure}

\begin{figure}[!htbp]
\[
\includegraphics[width=5.5in]{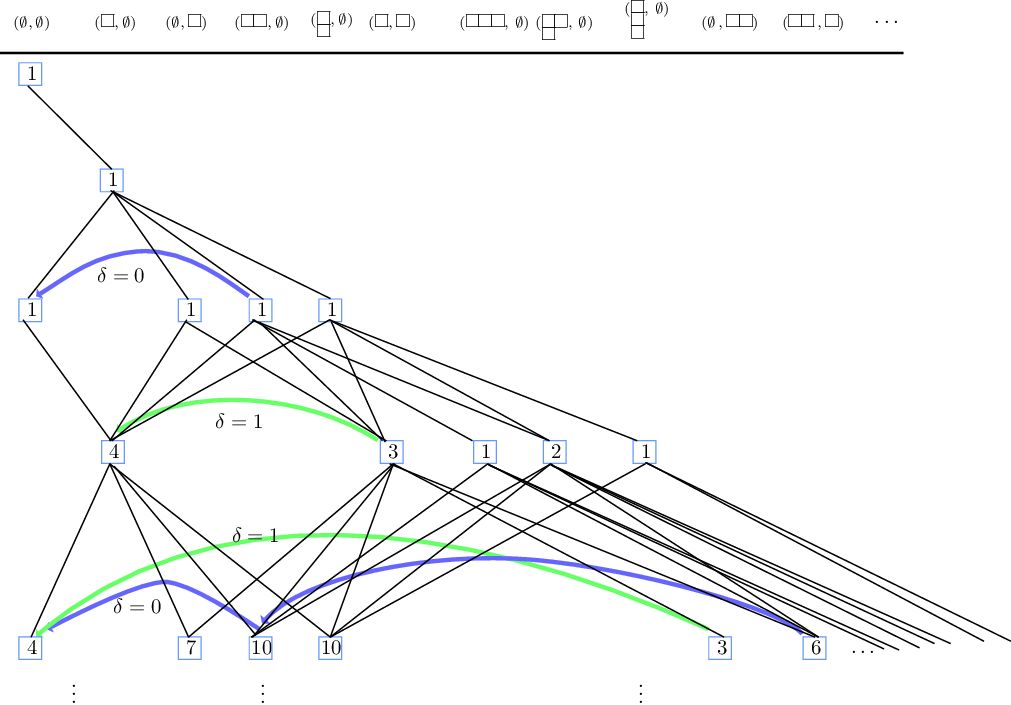}
\]
\caption{
Augmented 
Bratelli diagram for $P^2_n(\delta)$ up to $n=4$ (cf. Fig.\ref{fig:PnBratelli}), 
but here truncated 
to exclude some modules 
at rank 4, 
such as for $\lambda = \left( (1^2),(1)\right)$.
The 
coloured 
arrows represent the maps  
between the standard modules for the specified $\delta$. 
For example, 
on level $\nk=4$ when $\delta=1$ there is a map from 
$\SSS_4(\emptyset, (2))$ to 
$\SSS_{4}(\emptyset,\emptyset)$, indicated by a green arrow, with image of dimension 3.
$\;$ 
(See (\ref{pa:Trep}).)
\label{fig:bratvtwo}}
\end{figure}

To set the scene, 
we now briefly outline the problem 
(explicit  
details are
in \S\ref{ss:defns}). 
First recall that 
over $\C$ 
the partition algebras $P_n$ are generically semisimple, 
and in all cases with $\delta\neq 0$ the standard modules $\SSS_n(\lambda)$ give a basis for 
the Grothendieck group,
so that 
much of their combinatorial representation theory  
is encoded in the standard restriction diagram 
(generically the Bratelli diagram, or else 
a suitably enhanced version thereof)
for the tower of algebras 
$P_n \subset P_{n+1}$
in the generic case.
See Figure~\ref{fig:PnBratelli}.

In Fig.\ref{fig:PnBratelli}  
we also record the morphisms between standard modules 
in the cases $\delta = 1,2$ 
(generically there are no such morphisms, by Schur's Lemma).
The Figure encodes a lot of information. 
However here we
will only need some specific observations from it.


One can show, 
for example
using 
suitable 
gram matrices, 
that 
$\SSS_n(\emptyset)$ is simple for all $n$ for all but a given set of 
$\delta$ values (see \cite{MartinSaleur94b} - we review this in 
\S\ref{ss:gram-tower-semi}). 
It is this step that 
uses  
the combinatorial equalities. 
One can then show using Frobenius reciprocity and the connectedness of Fig.\ref{fig:PnBratelli} 
that $P_n$ is semisimple for every $n$ for all but this set of $\delta$ values 
(again see \cite{MartinSaleur94b}, and here (\ref{pa:indicator})). 

The corresponding Bratelli diagram for $P^2_n$ is drawn in Figure~\ref{fig:bratvtwo}. 
The details of the arguments for the two main steps are both different here. 
However  
by comparing  
Figures \ref{fig:PnBratelli} and \ref{fig:bratvtwo} 
one can already 
anticipate  
that the setups are at least  
structurally  
similar; 
and
in this paper,
in \S\ref{ss:maintheo} and \S\ref{ss:2tonal}, 
we show that both the gram matrix and Frobenius reciprocity strategies lift 
(complete with new combinatorial identities)
to prove the new 
semisimplicity criteria.   


\medskip \vspace{.1in} 

The key mathematics for this paper was completed during a joyful  and inspiring visit of Georgia to the University of Leeds.
Due to the intervention of life --- in the form of multiple happy births and sad deaths --- the write-up 
has taken much longer to complete.

\medskip 

\noindent
{\bf Acknowledgements}.
We would like to thank Tom Halversen for his 
guidance and 
advice  
that Georgia would be happy to be kept  as an author, under the sad circumstances.
PPM, OHK and AEP thank EPSRC for support from
the grant
EP/L001152/1.
PPM and AEP also thank EPSRC for support from the programme grant 
EP/W007509/1.


\section{Definitions 
and basic properties
of 
tonal partition 
algebras $\PP$}    \label{ss:defns}

\newcommand{\Potts}{{\mathfrak{P}}}
\newcommand{\Pcat}{{\mathsf{P}}}
\newcommand{\Mat}{{\mathsf{Mat}}}
\newcommand{\mat}{\left( \begin{array}{ccccccccccccccccccccc}}
\newcommand{\tam}{\end{array} \right)}

\newcommand{\ul}[1]{\underline{#1}} 

\newcommand{\Fk}{\mathbbm{k}}   

\newcommand{\pro}{\mathcal{P}}

\newcommand{\DDD}{{\mathcal{D}}}  

\noindent 
In this paper 
$\N$ denotes the set of  natural numbers and $\N_0$ the natural numbers with 0. 
For $n \in \N$ define $\underline{n} = \{1,2,...,n\}$ and 
$\underline{n}' = \{1',2',...,n'\}$.
The symmetric group is denoted $\Sym_n$. 
We write $\omega_n$ for the Coxeter longest-word element in $\Sym_n$. 
We will need some notation for set partitions.

\newcommand{\bb}[1]{b^{#1}}  
\newcommand{\bbb}[1]{b_0^{#1}}  
\newcommand{\w}[1]{w^{#1}}

If $S$ is a set, we write $\rmP_S$ for the set of set partitions of $S$. 
If $p \in \rmP_S$ and $S' \subset S$ we write $p|_{S'} \in \rmP_{S'}$ for the 
partition restricted to the subset; 
and $\#_{S'}(p) \; := \; |p| - |(p|_{S'})|$ 
(the number of parts of $p$ not intersecting $S'$). 

If $g$ is a graph on vertex set  $V$ we write $\pi_0(g) \in \rmP_V$ for the partition according to connected components of $g$. 

\mdef  \label{pa:Pn}
Consider a set consisting of 
$n+m$ nodes, 
organised as
an `upper' row of $n$ nodes and a lower row of $m$ nodes.
We label the upper nodes, left to right, 
using the set 
$\underline{n} = \{1,2,\dots,n\}$; 
and the lower row similarly by 
$\underline{m}' = \{1',2',\dots,m'\}$. 
We write $\rmP_{n,m}$ 
(or $\rmP_{nm} $ if $nm$ is unambiguous)
for the set of all set partitions of the above set.
That is, 
$\rmP_{n,m} = \rmP_{ \underline{n} \cup \underline{m}' } $. 

\newcommand{\piz}{\pi_0}  

\mdef 
An element of $\rmP_{n,m}$ can be represented 
as a graph $g$ on a vertex set  $V \supseteq \underline{n} \cup \underline{m}' $, 
using $\pi_0(g)|_{  \underline{n} \cup \underline{m}' }$. 
Hence an element of $\rmP_{n,m}$ can be represented 
pictorially
as a graph drawn 
in a rectangular box with the rows of nodes 
$\{ 1,2,...,n\}$ 
on the top and 
$\{ 1',2',...,m'\}$
on the
bottom edge of the box,
as for example here:
\beq 
\qquad 
\{\{ 1,2,6,6'\},\{ 3,4 \}, \{ 5,3'\}, \{ 1',2'\},\{ 4',5'\}\} 
\; = \; 
\rb{-.21in}{\ig[width=3cm]{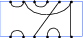}}
\in \rmP_{6,6}
\eq 
Vertices 
in $V\setminus \underline{n} \cup \underline{m}'$ 
(if any) are  
drawn in the interior. 
(Observe that drawn graph edges can cross in the interior of the box, but 
should  
do so transversally, 
and not at a vertex, 
to avoid ambiguity.)
 $\; $ 
Even more schematically we have pictures/diagrams such as:
\beq\label{bn} 
\bb{n} \; 
\; := \; \;
\rb{-.21in}{\ig[width=3cm]{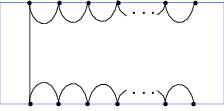} }\in\rmP_{n,n}
\eq
%
and
\[ 
\rb{-.02in}{\ig[width=3cm]{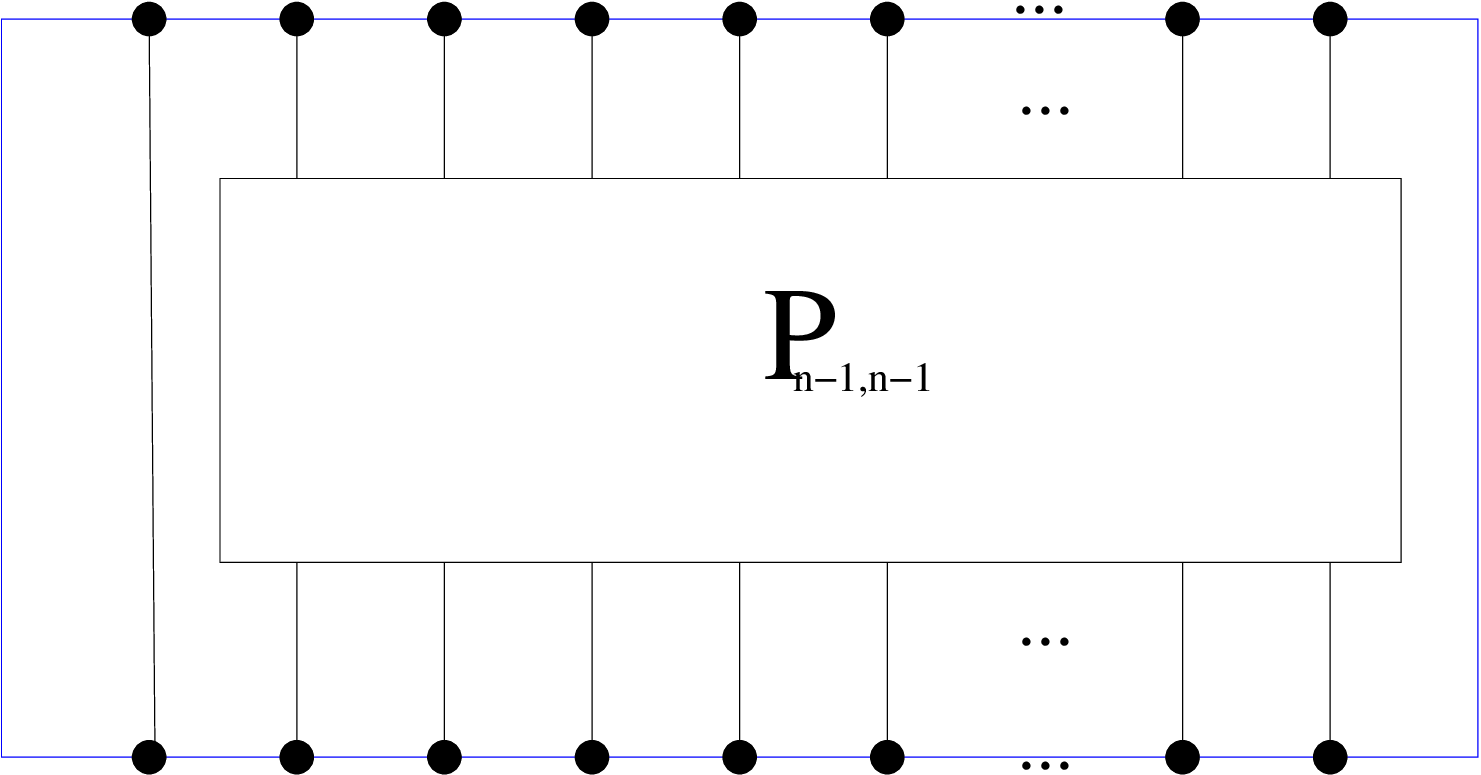}}
\]
representing a certain specific inclusion of $\rmP_{n-1,n-1} \hookrightarrow \rmP_{n,n}$. 
(Of course, such pictorial representations are highly non-unique.
There are many graphs representing the same partition; and many 
embeddings in the box for each graph.)
%

\ignore{ 
We can even allow some representative graphs with additional vertices 
drawn in the interior of the box, so long as the connections among 
$\underline{n} \cup \underline{m'} $  are correct. 
}

\mdef  \label{de:Pcomposition}
Note that in this way a suitable {\em stack} of two 
or more
box representations 
becomes itself another graph and hence a  
representation of another partition. 

Thus for example the following diagram, 
which will be important later (see e.g. \eqref{eq:wpw}), 
indicates a map taking any element of 
$\rmP_{n,n}$ as input and producing an element of $\rmP_{n-2,n-2}$ as output: 

\beq \label{eq:wpw0} 
\rb{-.6422in}{\ig[width= 8.10cm]{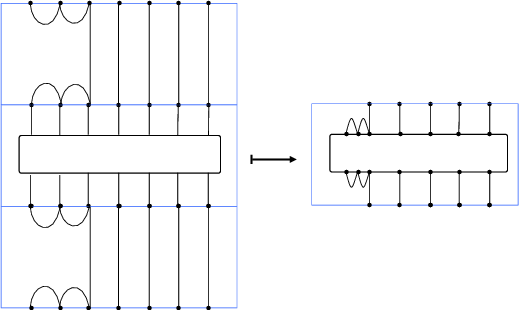}}
\eq

\mdef 
Let $p\in\rmP_{\nk,m}$. 
An element $s\in p$ is called a {\em part} or {\em block}.
A part is called a {\em propagating part} if it 
contains  
nodes from both the upper and lower row. 

For example, the partition 
\rb{-.21in}{\ig[width=3.5cm]{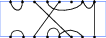}}$\;\in\rmP_{8,8}$ 
has 3 propagating parts and 4 non-propagating parts.

\medskip 

In this Section we follow    
\cite{amm2019tonal} 
closely,   
apart from adding some appropriate fresh examples
(we reproduce some constructions here simply for convenience of reference).

\medskip 

\mdef  \label{de:Pn}
For given ground 
ring $\Fk$,
and $\nk \in \N_0$, 
let $P_\nk(\delta)$ be the usual Partition algebra with parameter $\delta \in \Fk$.
We assume 
some 
familiarity with these algebras (see e.g. \cite{Martin94,Halverson05})
and the corresponding partition category $\Pcat(\delta)$ 
(again as in \cite{Martin94}, or for example \cite{Martin08a}).
\\ 
For $d \in \N$ 
we will define the $d$-tonal partition algebra $\PP$ as a subalgebra 
of $P_\nk(\delta)$, in (\ref{de:P2}).

Here we will fix the ground field to $\C$,
unless stated otherwise
(much of our exposition works for arbitrary $\Fk$, 
but not all of the representation theory, 
and for the latter steps below a different approach would be needed even for $\Fk$  an arbitrary field).

Recall 
that $\rmP_n := \rmP_{n,n}$ 
can be taken as
the defining basis of the algebra $P_\nk(\delta)$. 
The morphism sets 
of the partition category $\Pcat(\delta)$ are 
$\Pcat(\delta)(i,j) = \Fk \rmP_{i,j}$. 
As noted, we assume familiarity with the category composition
(which, when $i=j=n$, is the algebra composition).
In particular the reader will recall that it is a mild generalisation of the 
box-stack composition 
illustrated in (\ref{eq:wpw0})
above,
where a graph $g$ with $c$ purely-interior connected components represents 
$\delta^c \piz(g)$ rather than simply $\piz(g)$.

\mdef  \label{exa:ww}
Examples and notations. 
For $l \in \N$ 
let $\w{l} = \{ \{1,2,...,l \}\}  \in \rmP_{l,0} \;$ and 
$\w{l*} = \{ \{1',2',...,l' \}\} \in \rmP_{0,l}$
(for given $l$ these are written simply $\w{}$ and $\w{*}$ in \cite[(2.4)]{amm2019tonal}). 
These are composable in the category:
$\w{l} \w{l*} = \{\underline{l}, \underline{l'} \}$
and
$\w{l*} \w{l} = \delta \emptyset \; \in \Fk \rmP_{0,0}$.
Let 
$\sigma \; := \;  \{\{1,2'\},\{1',2\}\}   \in \rmP_{2,2} $
and 
$
\;\U \; :=\;  \w{2} \w{2*} \; = \; \{\{1,2\},\{1',2'\}\}   \in \rmP_{2,2}  
$
(in \cite[(2.2)]{amm2019tonal} this is denoted $u$). 
Let 
$ \;  \bb{l} \; := \;
\{\underline{l} \cup \underline{l'} \} $
(as in \cite[(2.2)]{amm2019tonal})
and 
$\; \bbb{l} := \; \w{l} \w{l*} \; = \; 
\{\underline{l} , \underline{l'} \} 
\;  \; \in\rmP_{l,l} $.

\mdef \label{de:flip}
The flip map $-^* : \rmP_{i,j} \rightarrow \rmP_{j,i}$ is obtained by toggling the prime label
on vertices (thus flipping diagrams from top to bottom). 
For example $\{\{1,2,1'\},\{2'\}\}^* = \{\{1,2',1'\},\{2\}\}$;
and see also (\ref{exa:ww}).
Note that this extends linearly to take the algebra to its opposite (and also the category). 

Observe that there is a natural inclusion 
of the symmetric group
$\Sym_n \hookrightarrow P_n(\delta)$
(the elementary transposition in $\Sym_2$ passes to $\sigma$ in (\ref{exa:ww}) above, and so on). 
Recall longest-word $\omega_n$. 
For $p \in \rmP_{n,m} $
the map $p \mapsto \omega_n p \omega_m  \in \rmP_{n,m}$,
denoted $p \mapsto \overline{p}$ 
corresponds to {\em left-right} flip of a diagram for $p$. 

\mdef  \label{pa:U}
Observe that side-by-side concatenation of diagrams gives a map
$\otimes : \rmP_{i,j} \times \rmP_{k,l} \rightarrow \rmP_{i+k,j+l}$.
%
%
For example 
\beq 
\U \otimes \U \; = \; 
\{\{1,2\},\{1',2'\}\} \otimes \{\{1,2\},\{1',2'\}\} 
\; = \;
\{\{1,2\},\{1',2'\}, \{3,4\},\{3',4'\} \}  \; \in \rmP_{4,4} 
\eq 
Then 
$\U^{\otimes m} \in \rmP_{2m,2m}$  $\;$ 
- noting that $\otimes$ is an associative composition. 

\newcommand{\W}{{\mathcal W}} 
\newcommand{\A}{{\mathcal A}}  
\newcommand{\one}[1]{1^{#1}}  

If we write 1 for a partition we mean 
$1=\{\{1,1'\}\}$. 
Then $\one{n} = 1\otimes 1 \otimes ...\otimes 1$ ($n$ factors). 
(In \cite{amm2019tonal} this $\one{n}$ is denoted $1_n$.)  
As in \cite[(3.8)]{amm2019tonal} we have, for given $n$,
\[
W^l \; := \; \left( \w{l} \w{l*}  \right) \otimes \one{n-l}
\; \in \; \rmP_{n,n},
\hspace{.9cm}
W_b^l \; \; := \;  b^{l+1}   \otimes 1^{n-l-1}
\; \in \; \rmP_{n,n}   ,
\hspace{.9cm}
\overline{W}_b^l \; \; = \;   1^{n-l-1}   \otimes b^{l+1}  
\]
For example the top `factor' on the left in (\ref{eq:wpw0}) is $W_b^2$ 
(in case $n=7$).


For given $n>0$ let 
$\A_1 \; := \; \{\{1\},\{1'\}\}\otimes 1^{n-1} =W^1$.

\mdef
Recall that 
this 
$\otimes$  extends to  
the usual 
monoidal product on category $\Pcat(\delta)$.




\mdef  \label{de:Pcat}
The object monoid of monoidal category 
$\Pcat(\delta)$   
is $(\N_0, +)$. 
\ignore{ 
For $i,j \in \N_0$ we write $\Pcat_{ij}$ for the set of set partitions of 
the set $\ul{i}\cup\ul{j}'$,
where $\ul{n}=\{1,2,..,n\}$ and $\ul{n}' = \{1',2',..,n'\}$.
}
As noted in (\ref{pa:U}), 
the monoidal composition is given by `side-by-side' concatenation. 
Note in particular that
the `source' object $n$ corresponds to the set $\{1,2,..,n\}$,
so that 
the monoidal product 
$1\otimes 2 = 3$ passes to $\{1\} \otimes \{1,2\} = \{1\}\cup\{2,3\}$. 
(In fact the monoidal structure we put on 
$\Pcat(\delta)$, 
while convenient, 
is not natural. It puts an order on the set $\{1,2,..,n\}$ which is not intrinsic to the set in this context.)

For fixed (but arbitrary) 
$\delta$  
we may write just $\Pcat$ for 
the category 
$\Pcat(\delta)$. 
If 
$\Fk$ is fixed but 
$\delta$  
is  {\em not} 
fixed then $\Pcat$ means the category over the ring $\Fk[\delta]$.

\subsection{The tonal partition algebras $P_\nk^\dell(\delta)$} 
\label{ss:constructPd}
$\;$ 

\newcommand{\tone}{tonal}  

\mdef 
Fix a positive integer $\dell$. 
Let partition $p = \{p_1, p_2, \ldots, p_l \} \in \rmP_{n,m}$.
We say $p$ is a  \emph{$\dell$-\tone\ partition} if for each 
part $p_j\in p$ the 
difference between the number of nodes on the top row and the number of nodes on the bottom row of $p_j$ is $0$ modulo $\dell$. 

Let 
$\rmP_{\nk,m}^\dell \; = 
\{p\in \rmP_{n,m} \mid p\text{ is a }\dell\text{-\tone\ partition}\}$.
And let $\rmP_{n}^\dell := \; \rmP_{n,n}^\dell$. 

\noindent 
Examples: 
\\ 
1. the partition 
\rb{-.1in}{\includegraphics[width=2.2cm]{exatonal.eps}}$\;\in \rmP_{6,6}$ 
is a $2$-tonal partition but not $3$-tonal; 
\\
2. the $P_\nk(\delta)$ identity,
$1_n$ 
is in 
$\rmP^\dell_\nk$ for any $\dell$;
\\
3. the partition 
\beq \label{eq:P29ex} 
\rb{-.22in}{\ig[width= 4.5cm]{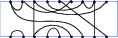}}
\;\; \in \;\; \rmP^2_{9}
\eq 
4. the partition $\U^{\otimes m} $ 
is in $\rmP^2_{2m,2m}$.

\mdef    \label{de:P2}
The restriction of the partition algebra $P_\nk(\delta)$  to the span of $\dell$-tonal partitions $\rmP_\nk^\dell$  yields 
a subalgebra --- see e.g.~\cite{amm2019tonal,Kosuda08,orellana2007partition, tanabe1997centralizer}.
This is called the  \emph{$\dell$-tonal partition algebra}, 
and denoted $\PP$ 
(various other names are also used in the literature).
We write $\Pcat^{\dell}(\delta)$ for the corresponding category
\cite{amm2019tonal}.

\mdef 
We write $\rmPe_S$ 
(respectively $\rmPe_n$)
for the subset of $\rmP_S$ 
(respectively $\rmP_n$)
of set partitions where every
part is of even size.
Observe that $\rmP^2_n = \rmPe_n$.

\mdef  \label{pa:Wdnil}
Observe that for all $d,m \in \N$ the element $W^{dm} \in P^d_{dm}$ 
is 
nilpotent when $\delta=0$, 
and 
indeed generates a nilpotent ideal. 
Thus the algebra $P^d_{dm}(0)$ is not semisimple.



\ignore{{ 
\mdef\ca{Shall I remove 2.4?? I do not think we need this part anymore} 
We denote the set of $\dell$-tone partitions , i.e. diagram basis of $\PP$, by $\mathrm{P}_\nk^\dl.$

I.e. if we call a connected component of a diagram a \emph{block},
then every block in a diagram in $\PP$ has the same number of nodes of
the top and the bottom modulo $\dell$.
}}


\ppmy{ 
\subsection{Preparatory homological and combinatorial generalities}
$\;$ 
\ppm{[probably just let 1st two items here be at end of previous subsection; and 3rd item be at start of next subsection, so this subsection disappears!]}
}

\newcommand{\siz}{\lfloor}

\mdef 
Given a set $S$ of finite sets, 
and $Q \in \N$,
we write $S\siz_Q$ for the subset of sets of order $Q$. Thus
\beq 
S \;=\;\; \bigsqcup_Q \; S\siz_Q
\eq

\mdef  \label{de:bell1}
Recall the Bell numbers 
$B(n) = | \rmP_{n,0} | = |\rmP_{\underline{n}}|$
and the Stirling numbers (of the second kind) 
$S(n,l) = | \rmP_{n,0}\siz_l | $
(see e.g. \cite[Ch.8]{Martin91} or \cite{Martin94} for realisations in our setting, and 
\cite{benkart2012planar,benkart2016schur,benkart2017walks} and
references therein). 
We have 
\(
B(n) = \sum_l S(n,l)   .
\)

Analogously let 
$$
\T(m,l) \;  :=  \;
       \left|    
             \rmPe_{2m,0}\siz_l \right|  \;\;
$$
- i.e. the number of even partitions into $l$ parts of the set $\underline{2m}$.

\subsection{\ppmm{Basic representation theory}} \label{ss:brt}       $\;$

\mdef  \label{de:GFfunctors}
Recall that for an algebra $A$ and idempotent $e \in A$ 
we have the following
`idempotent reciprocity' 
between the module categories of the algebras $eAe$ and $A$ 
(see e.g. 
\cite{green2007polynomial,Martin94,amm2019tonal}). 
\\ 
The functor 
$G_e : eAe-\!\!\!\!\!\mod \rightarrow \; A\!-\!\!\!\!\!\mod$ 
is given 
on objects
by 
$G_e - \; = \; Ae \otimes_{eAe} -$;
and this has a right-adjoint given by $F_e - = e- \;$.
The maps on 
morphisms will be clear, 
see e.g. \cite{CurtisReiner62,Martin91}.
\\ 
Functor $G_e$ is right-exact and $F_e$ is exact.
\\ 
Now suppose the ground ring is a field.
If $L$ is a simple $A$-module and $eL \neq 0$ then 
$eL$ is a simple $eAe$-module and 
the Jordan--Holder composition multiplicity 
$ \;  [eM : eL] \; = \;  [M : L] \; $
for $ \; M \in A\!-\!\!\!\!\!\mod$.
\\
If the ground ring is a field and 
$\Lambda(A)$ denotes an index set for simple $A$-modules then there is a choice for the index set $\Lambda(A/AeA)$ such that 
$\Lambda(A) = \Lambda(eAe) \sqcup \Lambda(A/AeA)$.

\medskip



\ignore{{ 

\subsection{How things work for the ordinary partition algebra}   $\;$ 

In this paper we follow the original method that works for
the partition algebras themselves.
In this subsection we assume familiarity with the partition algebra, and can then give a convenient brief overview of our approach in terms of the partition algebra case.

}}


\newcommand{\Cdelta}{\C[\delta_c\in \C]} 

Recall that the partition algebras $P_n$ 
(for $n \in \N$)
may be defined over $\Z[\delta]$, where they are of finite rank; and hence over any ring that is a $\Z[\delta]$-algebra, by base change.
For example we can work over the ring $\C[\delta]$, 
which is a $\Z[\delta]$-algebra by inclusion; 
or over $\C$ by evaluating
$\delta$ as some element $\delta_c\in\C$
(we may write   
$\Cdelta$ 
for $\C$ made a $\Z[\delta]$-algebra in this way, 
for example $\C[2]$,
or simply write $\C$ if no ambiguity arises, leaving the action of $\delta$ implicit). 

\ppmx{[just around here is where the standard modules are defined. this is one of the things that we will parallel for the tonal case, so perhaps this should be highlighted more clearly - so that explicit reference to this part can be made later. - and then explicitly paralleled in the tonal case. (BTW, I note that here it says very explicitly that qh will not be used.)]}

Indeed 
the pair 
$(\C[\delta],  \Cdelta )$
represent a modular system 
(in the sense, for example, of 
\cite{CurtisReiner62} or \cite[\S1.9]{Benson95})
for the 
representation theory of 
$P_n(\delta_c\in\C)$ - the algebra over 
$ \Cdelta $.    

\ppmy{[A copy of Pn-Brat-fig  was 
invoked here (it is labeled as fig:PnBratelli, which unpacks as \ref{fig:PnBratelli}, due to the later duplicate, but here becomes Fig.1). But without comment! And it is still duplicated later! Need to resolve this. Perhaps put the fig in Intro somehow -- now DONE! (as PnBratFig.tex) -- and remove other instances?]}

\mdef \label{lem:ss01}
Over the field of fractions of 
$\C[\delta]$
the algebras $P_n$ are semisimple. 
The simple modules can be labeled with $\lambda\in \Lambda^n$, the set of integer partitions of ranks up to $n$
(see
Fig.\ref{fig:PnBratelli}, 
and
\ref{pa:globloc102}).
\ppmy{[--maybe rearrange so   \ref{pa:globloc102} arrives here-ish??]}
There are integral lattice forms
of the simple modules,
a preferred choice of 
which 
(see \ref{pa:globloc102})
thus define
{\em standard modules} 
over $\Z[\delta]$, and hence by base change over any ground ring.
$\;$ 
\\
(At each step in this 
review
section we  
give, for ease of comparison, a link to the lift of the step  
to the 
new
$P^2$ case in \S\ref{ss:2tonal}. Here then:
\confer{\ref{pa:Pn2mod}}
)


Once the ground ring is fixed, the standard modules may be 
denoted $\SSS_n(\lambda)$, where label $\lambda$ is an integer partition of rank up to $n$.
Over $\C[\delta]$ these modules are quotients of indecomposable projectives.
Upon specialisation to $\delta_c \in \C^\times$ they have  
pairwise distinct simple heads.
We write $\DDD_n^{\delta_c}(\lambda)$ for the simple head of $\SSS_n(\lambda)$  in the specialisation evaluating $\delta$ at $\delta_c$. 

The case $\delta_c =0$ can be treated as a mild degeneration of this setup, and we will only write out these details explicitly when needed for our main proof.

\medskip 

{\it{Remark}}: Here we use the language of classical Brauer modular representation theory (see e.g. Curtis--Reiner \cite[\S 18]{CurtisReiner62}). 
The modules $\SSS_n(\lambda)$ 
also coincide with cellular cell modules and quasi-hereditary standard modules but we will not need this technology.
A natural construction is via the partition category $\Pcat$ 
over the corresponding ground ring, which we briefly recall in \S\ref{ss:PottsFunctors}.

\ppmx{[could perhaps put \ref{pa:globloc101} here for example. Done.]}




\mdef   \label{pa:globloc101}
\ppmx{[before, we had the rep theory we need demoted to the start of sec.\ref{ss:review}. but now we are putting some of it here (15/8/24: now moved to sec.3) - so that we can parallel the index set setup when we come to the $P^2$ story below. maybe we should therefore move more of it from sec.\ref{ss:review} to here? or otherwise move the corresponding bit for $P^2$ to later?]}
Note that if 
our parameter 
$\delta \in \Fk$ is invertible 
and $n>0$ 
then $\A_1$ 
from (\ref{pa:U}) 
is a pre-idempotent 
(idempotent up to a scalar)
in $P_n$. 
Observe 
that 
$\A_1 P_n \A_1 \cong P_{n-1}$; and that the quotient 
$P_n/ P_n \A_1 P_n \cong \Fk \Sym_n$,
and so is semisimple over $\Fk = \C$. 
\confer{\ref{pa:globloc2}}

\mdef  \label{pa:globloc102}
Let $\Fk = k$ be a field 
and let  $ \Lambda(k\Sym_n) $ denote the usual index set for simple $k \Sym_n$-modules. 
For example 
if $k=\C$ this is the set $\Lambda_n$ of integer partitions of $n$. 
It 
then
follows 
from (\ref{pa:globloc101}) and (\ref{de:GFfunctors}) 
(or see e.g. \cite{Martin94} or \cite[(4.1)]{amm2019tonal}) that
$\Lambda(P_n) 
= \Lambda(P_{n-1} )  \cup \Lambda(k \Sym_n)
= \cup_{i=0}^n \Lambda(k \Sym_i)
$
is an index set for simple $P_n$-modules
(provided $\delta \neq 0$). 
If $\delta \neq 0$ and  $k=\C$ we have $\Lambda(P_n) =\Lambda^n \; := \; \cup_{i=0}^n \Lambda_n$.

For $\lambda \in \Lambda(P_n)$ we write $|\lambda|$ for its 
order as an integer partition, i.e. the $i$ value such that     $\lambda \in  \Lambda(k \Sym_i)$.
This `rank' places $\lambda$ in the above decomposition of $\Lambda(P_n)$. 

\newcommand{\GA}{G_\A}  

In case $k$ contains $\QQ$ (e.g. $k=\C$)  
the standard $P_n$-modules may be given as follows. 
For rank $n$ then $\SSS_n(\lambda)$ is 
given by 
the corresponding simple module of $P_n/P_n \A_1 P_n$
(for which, over $k=\C$, all lattice forms are equivalent; but we can take it to be the corresponding $k \Sym_n$ Specht module).
If the rank is less than $n$ then we can proceed as follows.
If $\delta\neq 0$,
then we can take 
$\SSS_n(\lambda)  = G_{\A_1} \SSS_{n-1}(\lambda)$.
Or if $n>1$ then we can take 
$\SSS_n(\lambda)  = G_{W^1_b} \SSS_{n-1}(\lambda)$
(or indeed use other standard constructions that are equivalent where defined, for example using the category $\Pcat$ as a source of bimodules - we will not need these details here).
$\;$
\confer{\ref{pa:Trep}}

\mdef  \label{de:GA}
Consider the setup in (\ref{de:GFfunctors}). 
In our case, with $A=P_n$ and $e= \A_1$ let us 
now write simply $G_\A$ and $F_\A$ for the functors. 
Pass to any specialisation as in (\ref{lem:ss01}).
Observe that $\A_1 L=0$ if the index of 
simple module 
$L$ obeys $|\lambda| =n$; and is nonzero otherwise.
It follows in particular that 
we have `upper-triangularity'
of standard module decomposition matrices, 
with simple composition multiplicities obeying:
\begin{equation}
      \label{eq:uppertriangular}
[\SSS_n(\lambda) : \DDD_n^{\delta}(\mu)] \neq 0 
\;\implies\; 
|\lambda| < |\mu |  \; \mbox{ or } \lambda=\mu
\end{equation}
%
%
{From this we see that $\Lambda(P_n)$ has a 
natural partial order with a 
unique `first' element, $\lambda=\emptyset$. We call $\SSS_n(\emptyset)$ the `spine' module 
--- it occurs in the spine 
- the left edge -
of Fig.\ref{fig:PnBratelli} 
for example.}
$\;$
\confer{\ref{pa:P2head}}


\newcommand{\E}{E}

\medskip 
\mdef   \label{de:Pn-spine} 
Fix $n$  and a ground field $k$ (with the property of $\Z[\delta]$-algebra).
Let $\E_1 =\bb{n} \in P_n$  
as in (\ref{bn}) above. 
And, as in (\ref{exa:ww}), let 
\beq   \label{eq:E0} 
\E_0 = \bbb{n}  = \{ \{1,2,...,n\},\{1',2',...,n'\}\}
\eq 
We  have $\E_0 \E_0 = \delta \E_0$, 
and 
\beq \label{eq:EPE} 
\E_0 P_n \E_0 = \delta k \E_0 ,
\eq 
Thus $\E_0$ passes to an unnormalised primitive idempotent in any specialisation for which $\delta$ has an inverse.
In these cases then, the left ideal $P_n \E_0$ and right ideal $\E_0 P_n$ are  indecomposable projective. 

In fact,
noting (\ref{pa:globloc102}), 
we can take 
$\SSS_n(\emptyset) = P_n \E_0$ up to isomorphism; and 
$\SSS_n((1)) = P_n \E_1 / P_n \E_0 P_n$. 

By (\ref{eq:EPE}) we have a 
multiplication map
$\mu :  \E_0 P_n \times P_n \E_0  \rightarrow k\E_0$
and then a map $\kappa :  \E_0 P_n \times P_n \E_0  \rightarrow k$
given by 
$\mu(a,b)=ab = \kappa(a,b) \E_0$. 
Observe that this gives a contravariant form on $P_n \E_0$
with respect to the usual flip map from (\ref{de:flip}). 

Recall the chain of ideals
$P_n \supset P_n E_1 P_n \supset P_n E_0 P_n$.
We immediately have $E_1 E_1 = E_1$ and 
$E_1 P_n E_1 = k E_1$ mod. $P_nE_0 P_n$.
Thus $E_1$ is a primitive idempotent in the corresponding quotient. 
\confer{\ref{de:Pn2-spine}}

\mdef 
\label{de:PnE0basis}
Observe that $P_n E_0$ has basis the subset of the partition basis of $P_n$ consisting of all partitions in which the bottom row of vertices exactly make one of the parts. The basis is thus constructed by taking all partitions of the top row of $n$ vertices. 
There is a corresponding `flipped' basis for $\E_0 P_n$,
and with these bases we fix a gram matrix $\Gram_n(\emptyset)$ for the form $\kappa$. 
(See also \S\ref{ss:gramgen00}.)

\medskip 

In (\ref{pa:2bas})
it will be convenient to organise the top-row partitions 
in the partition basis of $P_n \E_0$ 
according to the number of parts.
There is of course one with one part, for any $n$. 
The partitions into two parts or less are 
double-counted by   
the power set of the set of vertices, so there are $2^{n-1}$ of these. 
For the three parts case, and higher cases, things are a bit more complicated, 
see (\ref{pa:floor1}) {\em et seq}. 



\subsection{Gram matrix generalities} \label{ss:gramgen00} 

$\;$

Before proceeding, let us take a moment to abstract the core gram matrix arguments 
from the ordinary $P_n$ case to a level that includes also the tonal cases.

\newcommand{\form}[1]{\langle #1 \rangle}
\newcommand{\bass}{\boldsymbol{\beta}}
\newcommand{\Gramb}{\Gram_{\bass}}
\newcommand{\rank}{{\mathrm{rank}}}
\newcommand{\LL}[1]{L^{#1}}
\newcommand{\head}{{\mathrm{head}}}

Here we start by briefly recalling some generalities (cf. e.g. \cite{amm2019tonal}). 
Throughout this section, the $P_n$ case will provide examples.

\newcommand{\herpre}{heredity preidempotent}

\mdef \label{pa:gram00}
Let $K$ be a commutative ring, and let $A$ be a finite rank $K$-algebra. 
Suppose $A$ has  an involutive 
self-set-map, written $a \mapsto a^*$, that gives an antiautomorphism
(or equivalently an opposite-isomorphism) --- such as the transpose map for a simple matrix algebra. 
If there 
is a non-zero element $e$ in $A$ such that $e^* = e$ and 
\beq   \label{eq:eAe=Ie}
eAe = Ie
\eq  
for some ideal $I \subseteq K$, then we call $e$ a {\em heredity preidempotent}.

Caveat: a \herpre\ is not necessarily a pre-idempotent in the sense of (\ref{pa:globloc101}).
%
{(Recall that an idempotent $e\in A$ is primitive if and only if $eAe$ is local.}
\footnote{ \label{eg:cyclic}
\newcommand{\za}{\zeta}
Aside: Note that the condition $e=e^*$ on an
unnormalised primitive idempotent is significant. 
It does not hold, for example,
for primitive idempotents of 
the complex group algebra of the cyclic group of order 3, 
$\; G = \; \langle c | c^3 \rangle$,
with map $-^*$ given by inverting elements of the group basis. 
Consider $e = c^0 + \za c +\za^2 c^2$ where $\za^3 = 1 \in \C$,
from which $e^* = c^0 +\za^2 c +\za c^2$.}%
)

Suppose $e$ is a heredity preidempotent, and 
in particular that $ee=\kappa_e e$ for some $\kappa_e \in I$. 
Then the left-ideal $Ae$ has a   
form 
on it
$$
\langle -,- \rangle_e : \; Ae \times Ae  \;\; \rightarrow \;\; I
$$ 
given by 
\beq \label{form} 
\form{ae,be}_{{e}}e  \; =\; (ae)^* be \; = ea^* b e \in Ie  . 
\eq 
Observe from the construction that this form is bilinear and indeed contravariant
with respect to map $*$. 
(Note that this form makes sense when $e \neq e^*$ but, for example, in our cyclic group 
example above$^{\ref{eg:cyclic}}$ 
the form vanishes, corresponding to the module $Ae$ being simple but not isomorphic
to its contravariant dual.)

Let $\bass$ be a $K$-basis for $Ae$. 
Write $\rho$ for the matrix representation corresponding to 
$Ae$ afforded by $\bass$ 
(hence $\rho^T$ is also a representation,
\ppmm{by the opposite-isomorphism property});
and 
write 
$\Gram_{\bass}$ for the gram matrix of $\form{,}_e$ 
with respect to $\bass$. 
Observe that $ \Gram_{\bass}$ is a $K$-matrix, hence with $K$-valued determinant.
And
\beq  \label{eq:intert2} 
\rho^T(a) \;\; \Gram_{\bass} \;\; = \;\; \Gram_{\bass} \;\; \rho(a) \hspace{1cm} \forall a \in A
\eq 

\mdef
Example. Let $K=\Z[\delta]$ and consider the algebras $P_n$ over $K$. 
Then $E_0$ as in (\ref{de:Pn-spine}) is a \herpre.
We have a basis for $P_n E_0$ as in (\ref{de:PnE0basis}), and in particular for $n=3$ we can order
as $\{ (1)(2)(3), (12)(3), (13)(2), (1)(23), (123) \}$ (omitting the primed part). 
See (\ref{eg:gramS30}) for explicit construction of the matrix representation; and Fig.\ref{fig:gram234} 
for the corresponding gram matrix. 
\ppmy{...
\ppm{[why ... here?]}
}

\mdef 
Now let $k$ be an algebraically closed field that is a $K$-algebra by a map $\psi:K \rightarrow k$
(for example $\C$ is a $\C[\delta]$-algebra by letting $\delta$ act as some $\delta_c \in \C$),
and denote $k$ as $k_{\psi}$ when regarded as this $K$-algebra.
Thus $A(\psi) =  k_{\psi} \otimes_{K} A $  is a finite dimensional $k$-algebra. 
Suppose the image of $\bass$ is a basis for the image of $Ae$
--- hereafter we call such images `evaluations'. 
And suppose the evaluation of $ \Gram_{\bass} $ intertwines the evaluations of representations
$\rho $ and $\rho^T$, which are $A(\psi)$-representations. 

Now suppose in particular that $K=\Z[\delta]$; and $k= \C$, so that 
specific 
$\psi$ maps are obtained by 
$\psi: \delta \mapsto \delta_c \in \C$. And write $A(\delta_c)$ for this $A(\psi)$, and so on. 

Since 
the determinant 
$|\Gramb| \in \Z[\delta]$ 
we may now define exponents $\alpha_{\delta_c}$ by writing 
\beq  \label{eq:gramfactor2}
|\Gramb |  \; = \; \varkappa \prod_{\delta_c} (\delta-\delta_c)^{\alpha_{\delta_{c}}} 
\eq 
(note $\varkappa \in \Z$) 
--- factorising if necessary over $\C$ by the algebraic closure of $\C$.

\mdef \label{pa:delta=0}
Observe that if matrix 
$\Gramb(\delta_c) = 0$    
then every matrix element in $\Gramb$ 
can be written in the form $(\delta-\delta_c)x$ for some $x \in \C[\delta]$,
so 
there is a $\C[\delta]$-valued matrix $ \Gramb' $ such that 
$\Gramb= (\delta-\delta_c) \Gramb'$ with 
$\Z[\delta] \hookrightarrow \C[\delta]$ in the 
natural way. 
And 
$\rho^T \; \Gram_{\bass}' \; =\; \Gram_{\bass}' \; \rho$.
Iterating 
this $\Gramb\rightarrow\Gramb'$ process, 
we arrive at 
a matrix 
$\Gramb^+$ with $\Gramb^+(\delta_c  )  \neq 0 $.
{And hence we have a non-trivial intertwiner at $\delta=\delta_c$.}

Aside: 
We would like (but here don't need) to deduce from (\ref{eq:eAe=Ie}) that $Ae$ has a simple head 
over $\C_{\delta_c}$,
regardless of whether $e$ is a pre-idempotent. 

\mdef \label{pa:argument-for-4.8}
In a case of $\delta_c$ 
as above 
where $e$ is pre-idempotent in $A(\delta_c)$,
i.e. $\kappa_e(\delta_c) \neq 0$, 
then $Ae \cong Ae'$ for an idempotent 
$e' =   \kappa_e(\delta_c)^{-1}  e$, 
and indeed $e'$ is primitive so $Ae$ has simple head, call it $L^{\delta_c}$.
Observe from (\ref{eq:eAe=Ie}) that 
$Trace(\rho(e))= Trace(L^{\delta_c}(e)) = \kappa_e  \;$ 
and 
$\; Trace(L_\chi(e))=0$ for every other simple module. 

Indeed 
in all such cases
$\Hom(Ae,Ae) \cong eAe \cong k$ as a space (and $eAe^{op}$ as an algebra), 
while for any module $M$ we have 
$\dim(\Hom(Ae,M)) = [M : \head(Ae)]$ (the composition multiplicity in $M$ of the simple head),
so $Ae$ has no other factor isomorphic 
to the head.
Observe that $e\;\head(Ae)\neq 0$ and $e$ acts like 0 on any other simple factor of $Ae$.
Since $e=e^*$ this also holds on the right - meaning every simple factor 
of right module $eA$ except the head is 
killed by $e$, so the head of $Ae$ and the socle of its contravariant dual are isomorphic,
and this simple module does not appear elsewhere in the composition factors of either module.
So by Schur's Lemma the intertwiner must map head of $\rho$ to socle of contravariant dual
$\rho^T$.

In these cases we 
deduce that 
$$
\rank(\Gramb^+(\delta_c)) = \dim \LL{\delta_c} 
$$ 
(intertwiner maps head to socle)
so exactly 
$\dim(Ae) - \dim(\LL{\delta_c}) $  
diagonal entries in 
the Smith form
$Sm(\Gramb)$ 
(we assume $\Gramb$ has a Smith form, as it has for example if
$ K$ is a PID)
have at least one factor $(\delta-\delta_c)$.
Comparing with (\ref{eq:gramfactor2}) we deduce
the following.

\mdef {\bf Proposition}. 
Suppose $A$ is a finite rank $\Z[\delta]$ algebra and $e$ a heredity preidempotent.
Then for each $\delta_c \in \C$ with $\kappa_e(\delta_c) \neq 0$ we have 
exponent 
\beq  \label{eq:alphabound2}
\alpha_{\delta_c}   \; \geq \;  \dim(Ae) - \dim(\LL{\delta_c})     
\eq
(from (\ref{eq:gramfactor2}))
with strict equality to zero if $ \LL{\delta_c} = Ae $.
%
\label{pr:hid22}
In particular 
if $ \LL{\delta_c} \neq Ae $,
that is if $\Gramb(\delta_c)$ is singular, then $Ae$ is 
indecomposable but 
not simple and $A$ is not semisimple;
and 
if   $\Gramb(\delta_c)$ is non-singular then $Ae$ is simple. 
\qed

\medskip 

\mdef We write $\Gram_n(\emptyset)$ 
or $\Gram_n(0)$ 
for the Gram matrix 
of the $P^{}_n(\delta)$ 
module   $\mathcal{S}_n(\emptyset)$ 
in the   {standard-basis (as in (\ref{de:PnE0basis}))}.

\begin{eg} \label{eg:gramSn0}
\ignore{ 
Consider the  
Gram matrix 
$\Gram_n(\emptyset)$
of the $P^{}_n(\delta)$ 
module  $\mathcal{S}_n(\emptyset)$ 
in the  {standard-basis (as in (\ref{de:PnE0basis}))}.
}
The Gram matrix $\Gram_n(\emptyset)$
is 
given 
for $n=2,3,4$
in Fig.\ref{fig:gram234}.
For example the notation $(12)(3)$ represents the
partition $\{\{1,2\},\{3\}\}$, and so on. 
(Remark. The only basis element here not present in the
planar/non-crossing/Temperley--Lieb case is $(13)(24)$.) 

\begin{figure} 
$\begin{pmatrix}
\delta^2 & \delta\\
\delta & \delta\\
\end{pmatrix}$ 
$
{\small 
\begin{pmatrix}
\delta^3 & \delta^2 & \delta^2 & \delta^2 & \delta\\
\delta^2 & \delta^2 & \delta & \delta & \delta\\
\delta^2 & \delta & \delta^2 & \delta & \delta\\
\delta^2 & \delta & \delta & \delta^2 & \delta\\
\delta & \delta & \delta & \delta & \delta\\
\end{pmatrix}
}
$ 
%
$
{\small 
\begin{pmatrix}
\delta^4 & \delta^3 & \delta^3 & \delta^3 & \delta^3 & \delta^3 & \delta^3 & \delta^2 & \delta^2 & \delta^2 & \delta^2 & \delta^2 & \delta^2 & \delta^2 & \delta \\
\delta^3 & \delta^3 & \delta^2 & \delta^2 & \delta^2 & \delta^2 & \delta^2 & \delta^2 & \delta & \delta & \delta^2 & \delta^2 & \delta & \delta & \delta \\
\delta^3 & \delta^2 & \delta^3 & \delta^2 & \delta^2 & \delta^2 & \delta^2 & \delta & \delta^2 & \delta & \delta^2 & \delta & \delta^2 & \delta & \delta \\
\delta^3 & \delta^2 & \delta^2 & \delta^3 & \delta^2 & \delta^2 & \delta^2 & \delta & \delta & \delta^2 & \delta & \delta^2 & \delta^2 & \delta & \delta \\
\delta^3 & \delta^2 & \delta^2 & \delta^2 & \delta^3 & \delta^2 & \delta^2 & \delta & \delta & \delta^2 & \delta^2 & \delta & \delta & \delta^2 & \delta \\
\delta^3 & \delta^2 & \delta^2 & \delta^2 & \delta^2 & \delta^3 & \delta^2 & \delta & \delta^2 & \delta & \delta & \delta^2 & \delta & \delta^2 & \delta \\
\delta^3 & \delta^2 & \delta^2 & \delta^2 & \delta^2 & \delta^2 & \delta^3 & \delta^2 & \delta & \delta & \delta & \delta & \delta^2 & \delta^2 & \delta \\
\delta^2 & \delta^2 & \delta & \delta & \delta & \delta & \delta^2 & \delta^2 & \delta & \delta & \delta & \delta & \delta & \delta & \delta \\
\delta^2 & \delta & \delta^2 & \delta & \delta & \delta^2 & \delta & \delta & \delta^2 & \delta & \delta & \delta & \delta & \delta & \delta \\
\delta^2 & \delta & \delta & \delta^2 & \delta^2 & \delta & \delta & \delta & \delta & \delta^2 & \delta & \delta & \delta & \delta & \delta \\
\delta^2 & \delta^2 & \delta^2 & \delta & \delta^2 & \delta & \delta & \delta & \delta & \delta & \delta^2 & \delta & \delta & \delta & \delta \\
\delta^2 & \delta^2 & \delta & \delta^2 & \delta & \delta^2 & \delta & \delta & \delta & \delta & \delta & \delta^2 & \delta & \delta & \delta \\
\delta^2 & \delta & \delta^2 & \delta^2 & \delta & \delta & \delta^2 & \delta & \delta & \delta & \delta & \delta & \delta^2 & \delta & \delta \\
\delta^2 & \delta & \delta & \delta & \delta^2 & \delta^2 & \delta^2 & \delta & \delta & \delta & \delta & \delta & \delta & \delta^2 & \delta \\
\delta & \delta & \delta & \delta & \delta & \delta & \delta & \delta & \delta & \delta & \delta & \delta & \delta & \delta & \delta \\
\end{pmatrix}
}
$ 
\caption{Gram matrices for $\SSS_n(0)$ for $n=2,3,4$. 
The bases are $\{ (1)(2), (12) \}$,
$\{ (1)(2)(3), (12)(3), (13)(2), (1)(23), (123) \}$,
$\{  (1)(2)(3)(4), (12)(3)(4), (13)(2)(4), 
$ $(1)(23)(4), (14)(2)(3), (1)(24)(3), (1)(2)(34),
(123)(4), (124)(3), (12)(34), ..., (1234)  \}$ respectively (see main text for notation). 
\label{fig:gram234}}
\end{figure}

The corresponding  determinants are:
$
\; \det(\Gram_2(\emptyset)) = 
\delta^2(\delta-1)$; $\;$ 
$
\det(\Gram_3(\emptyset)) =
\delta^5(\delta-1)^4(\delta-2)$; $\;$
$
\det(\Gram_4(\emptyset)) =
\delta^{15}(\delta-1)^{{14}}(\delta-2)^7(\delta-3)$.

\end{eg}

\newcommand{\bassoo}[1]{
\mat 
#1 (1)(2)(3) \\ #1 (12)(3) \\ #1 (13)(2) \\ #1 (1)(23) \\ #1 (123) \tam
}
\newcommand{\basss}{
\mat 
\delta^3 & \delta^2 & \delta^2 & \delta^2 & \delta\\
\delta^2 & \delta^2 & \delta & \delta & \delta\\
\delta^2 & \delta & \delta^2 & \delta & \delta\\
\delta^2 & \delta & \delta & \delta^2 & \delta\\
\delta & \delta & \delta & \delta & \delta\\
\tam 
}

\mdef The Gram matrix $\Gram_n(0)$ intertwines 
the corresponding matrix form of 
$\SSS_n(0)$ with its contravariant dual.
The modules are generically isomorphic so the intertwiner is invertible
in generic specialisations.
But for $\delta=1$ we see from Fig.\ref{fig:PnBratelli} that 
$\SSS_2(0)$
has a 1d submodule, so the intertwiner maps the 1d head to the socle, and so has
rank 1 on evaluation at $\delta = 1$.

\mdef \label{eg:gramS30}
The matrix representation 
of left-module 
$\SSS_3(0)$ with the given ordered basis is given by
\[
(\A_1)\mat 
(1)(2)(3) \\ (12)(3) \\ (13)(2) \\ (1)(23) \\ (123) \tam
= \mat 
\A_1(1)(2)(3) \\ \A_1 (12)(3) \\ \A_1 (13)(2) \\ \A_1 (1)(23) \\ \A_1 (123) \tam
=\mat 
\delta (1)(2)(3) \\ (1)(2)(3) \\  (1)(2)(3) \\ \delta (1)(23) \\  (1)(23) \tam
=\mat 
\delta \\ 1&0 \\ 1&&0 \\ &&&\delta \\ &&& 1 &0 \tam
\mat 
(1)(2)(3) \\ (12)(3) \\ (13)(2) \\ (1)(23) \\ (123) \tam
\]
\[
(A_{12}) \mat 
(1)(2)(3) \\ (12)(3) \\ (13)(2) \\ (1)(23) \\ (123) \tam
=  
\bassoo{A_{12}} 
=\mat 
 (12)(3) \\ (12)(3) \\  (123) \\  (123) \\  (123) \tam
=\mat 
0&1 \\ &1 \\ &&0&&1 \\ &&&0&1 \\ &&&& 1  \tam
\mat 
(1)(2)(3) \\ (12)(3) \\ (13)(2) \\ (1)(23) \\ (123) \tam
\]
\[
(\sigma_{23})\mat 
(1)(2)(3) \\ (12)(3) \\ (13)(2) \\ (1)(23) \\ (123) \tam
=  
\bassoo{\sigma_{23}} 
=\mat 
 (1)(2)(3) \\ (13)(2) \\  (12)(3) \\  (1)(23) \\  (123) \tam
=\mat 
1& \\ &0&1 \\ &1&0&& \\ &&&1& \\ &&&& 1  \tam
\mat 
(1)(2)(3) \\ (12)(3) \\ (13)(2) \\ (1)(23) \\ (123) \tam
\]
\[
(\sigma_{12})\mat
(1)(2)(3) \\ (12)(3) \\ (13)(2) \\ (1)(23) \\ (123) \tam
=  
\bassoo{\sigma_{12}} 
=\mat 
 (1)(2)(3) \\ (1)(23) \\  (12)(3) \\  (13)(2) \\  (123) \tam
=\mat 
1& \\ &1&& \\ &&0&1& \\ &&1&0& \\ &&&& 1  \tam
\mat 
(1)(2)(3) \\ (12)(3) \\ (13)(2) \\ (1)(23) \\ (123) \tam
\]
where $\A_1 = (1)(1')(22')(33')$, 
$A_{12} = (11'22')(33')$,
and so on.
The corresponding intertwining is:
\[
\mat 
\delta \\ 1&0 \\ 1&&0 \\ &&&\delta \\ &&& 1 &0 \tam
\basss
=
\mat 
\delta^3 & \delta^2 & \delta^2 & \delta^2 & \delta\\
\delta^2 & \delta^2 & \delta & \delta & \delta\\
\delta^2 & \delta & \delta^2 & \delta & \delta\\
\delta^2 & \delta & \delta & \delta^2 & \delta\\
\delta & \delta & \delta & \delta & \delta\\
\tam 
\mat 
\delta&1&1 \\ &0& \\ &&0& \\ &&&\delta&1 \\ &&&  &0 \tam
\]
\[
\mat 
0&1 \\ &1 \\ &&0&&1 \\ &&&0&1 \\ &&&& 1  \tam
\basss
=
\basss
\mat 
0& \\ 1&1 \\ &&0&& \\ &&&0& \\ &&1&1& 1  \tam
\]
- the $\sigma_{ij}$ identities will be clear.
The Smith form of a matrix has the same rank, so
\[
\Gram_3(0) \leadsto
\mat 
\delta (\delta-1)(\delta-2) \\ & \delta(\delta-1 ) \\
&& \delta(\delta-1 ) \\ &&& \delta(\delta-1 ) \\ &&&& \delta \tam
\]
is forced here.



\subsection{Potts functors, symmetry and simple modules} 
$\;$   \label{ss:PottsFunctors}



\noindent 
This is an opportune moment to consider the Potts functors (see e.g. \cite{Martin91,Martin08a}), 
which facilitate the analysis of key representations that we will need
(culminating for example in (\ref{pa:simpleS0})). 

\mdef   \label{de:PottsFunctor}
%
For a given commutative ring $\Fk$, 
$\Mat$ denotes the monoidal category of matrices over $\Fk$ with Kronecker product as monoidal product
(in our convention 
$\Mat(m,n)$ is $m\times n$ matrices, with $m$ rows, and so on). 
For fixed $Q \in \N$,
$\Pcat(Q)$ 
(or 
simply 
$\Pcat$) 
denotes the partition category with $\delta_c = Q$
as in (\ref{de:Pcat}).
For fixed $Q \in \N$ 
the Potts functor is a strict monoidal functor 
$$
\Potts : \Pcat(Q) \rightarrow \Mat   .
$$
The functor may be given as follows.

\mdef  \label{de:Potts}
The category $\Pcat$ 
is linearly-monoidally generated by the unique partition elements in 
$\Pcat(1,0)$ and $\Pcat(0,1)$; the single-part partition elements in $\Pcat(1,2)$ and $\Pcat(2,1)$;
and the elementary permutation element in 
$\Pcat(2,2)$. 

The image under $\Potts$ of the generating object 1 is $Q$.
So the image of an element in $\Pcat(i,j)$ lies in $\Mat(Q^i, Q^j)$.
The rows of a matrix in $\Mat(Q^i, Q^j)$ are indexed by words in 
$\ul{Q}^i  \cong \hom(\ul{i}, \ul{Q})$;
and the columns by words in $\ul{Q}^j$. 
The images for $Q=2$ of the generators above are
\beq   \label{eq:Potts1}
\Potts(\{\{1\}\}) = \mat 1\\1 \tam
\hspace{1.8in}   
\Potts(\{\{1'\}\}) = \mat 1 & 1 \tam
\eq 
\beq 
\Potts(\{\{1,1',2'\}\}) = 
\mat 
 1&0&0&0 \\ 
0&0&0&1\tam
\hspace{1.358in}   
\Potts(\{\{1,2,1'\}\}) = 
\mat 1&0\\
0&0 \\ 
0&0\\
0&1
 \tam
\hspace{.3in} 
\eq 
\beq  \label{eq:Potts12}
\Potts(\{\{1,2'\},\{2,1'\}\}) = 
\mat 
1&0&0&0 \\ 
0&0&1&0 \\ 
0&1&0&0 \\ 
0&0&0&1 \tam
\eq 


More generally, 
for the image $\Potts(p)$ of a partition basis element $p$,
noting that each matrix entry position corresponds to a $Q$-state Potts configuration (or $Q$-colouring) on the corresponding vertex set, 
all vertices in the same part 
of $p$ 
must have the same `colour' for an entry 1, otherwise the entry is 0.
(The ``Potts rule''.)

\mdef Examples. Derived from (\ref{eq:Potts1})-(\ref{eq:Potts12}) above, 
keeping $Q=2$, we have 
(with $T$ for transpose) 
\beq 
\Potts(\{\{1,2,...,n\}\}) \;\; = \; \mat
1&0&0&...&0&0&1   \tam^{\! T} 
\;\; \in \;\; \Mat(Q^n,Q^0)
\eq 
\beq 
\Potts(\{\{1,2,...,n,1'\}\}) \; = \mat
1&0&0&...&0&0&0 \\  0&0&0&...&0&0&1   
\tam^{\!\! T} 
\;\; \in \;\; \Mat(Q^n,Q^1)
\eq 
\ignore{{ 
With the basis ordered as 111,112,121,122,211,212,221,222, we have
\beq 
\Potts(\{\{1\},\{2\},\{3\}\}) = \mat
1&1&1&1&1&1&1&1
\tam ,
\hspace{1cm}
\Potts(\{\{1,2\},\{3\}\}) = \mat
1&1&0&0&0&0&1&1
\tam ,
\eq 
}}

\newcommand{\Pottss}[1]{\Potts_{#1}}  

\mdef  \label{pr:SWdual01}
Observe that the functor 
$\Potts = \Pottss{(Q)} \;$ 
(momentarily writing $\Potts = \Pottss{(Q)}$  to emphasise the $Q$-dependence)
is invariant under the diagonal action of 
the symmetric group
$\Sym_Q$
(the element $\sigma_1 \in \Sym_Q$ takes word $w=11..1$ to 
$\sigma_1 w =22..2$ and so on). 
This is simply the Potts symmetry \cite{Martin91}, 
but in algebraic terms,
for $(\sigma)$ the matrix form of an element in $\Sym_Q$, and $p \in \Pcat(n,m)$ 
\beq  \label{eq:SW00}
\mat \sigma \tam^{\otimes n} \Potts(p) 
\; = \; \Potts(p)  \mat \sigma \tam^{\otimes m}
\eq 

\mdef The examples of (\ref{eq:SW00}) that we will use concern $p\in \rmP_{\ul{n}}$.
For example, with $Q=2$ and for any partition $p$ of $\ul{n}$ we have 
$$
\mat 0&1 \\ 1&0 \tam^{\otimes n} \Potts(p) 
\; = \; \Potts(p)  \mat 0&1 \\ 1&0 \tam^{\otimes 0}
\; = \; \Potts(p) 
$$
and so in particular 
the vector entry $\Potts(p)_{11..1}$ obeys 
$\; \Potts(p)_{11..1} = \Potts(p)_{22..2}$.

\mdef  \label{lem:norbits}
In light of the $\Sym_Q$ invariance, 
the dimension of the image of $\Pcat(n,0)$ under $\Potts$ is thus 
bounded above  
by the number of orbits of the $\Sym_Q$ action on $\ul{Q}^n$.

We will see in (\ref{lem:orb1}) that this number is the 
number of partitions of $\ul{n}$ into at most $Q$ parts,
and 
in (\ref{lem:basis1})
that the bound is saturated.

\medskip

\mdef 
%
{\em Lemma}.   \label{lem:SSAe}
Let $A$ be a $\C$-algebra and $e^2 = e \in A$ with the subspace $eAe=\C e$. 
Suppose $V$ is a space and 
$\rho : A \rightarrow End(V)$
is a semisimple representation of $A$ with
$Trace_{\rho}(e) =1$.
Then 
the irreducible $A$-module decomposition of $V$ contains 
the 
irreducible $head(Ae)$ with multiplicity one; 
and 
the submodule $\rho(Ae)V$ obeys $\rho(Ae)V \cong head(Ae)$
as an $A$-module.

\noindent {\em Proof}. It follows 
from the setup 
that $e$ is a primitive idempotent and so the left ideal  $Ae$ is indecomposable projective, with  $Trace_{Ae} (e) =1$. 
The projection by $e$ kills all simples in $V$ not isomorphic to $head(Ae)$; 
and the trace shows that its multiplicity is one
(a character is the sum of component irreducible characters).
   \qed 

\medskip 

\noindent 
Corollary: 
\label{pa:dimD0}
Observe that 
in $P_n(Q)$ 
the idempotent       
normalisation   of
the partition
$E_0 \; := \; b_0^n$ 
from (\ref{exa:ww})
is $\frac{1}{Q} E_0 \;$ and, 
fixing $Q \in \N$, 
$\; Trace(\Potts(E_0)) =Q$. 
Working over $\C$  
we have that $\Potts(P_n)$ lies in the natural basis choice of 
$End((\C^{Q})^n)$. Observe from the construction that  
$\Potts(P_n)$ is semisimple 
(cf. e.g. \cite{Adamson71}). 
We thus deduce 
from the Lemma 
that the simple head of the left module 
$P_n E_0 \cong \Pcat(n,0) $ 
occurs exactly once as a summand of the Potts representation 
$\Potts(P_n = \Pcat(n,n))$. 
Indeed since $\Potts(P_n E_0) = \Potts(P_n) \Potts(E_0)$ and $\Potts(E_n)$ is primitive,
the dimension of 
simple module 
$\DDD^Q_n(0)$ is the vector space dimension of $\Potts(\Pcat(n,0))$.

\mdef  \label{pa:2bas}
Indeed for $Q=2$ it 
is straightforward to see 
that the $2^{n-1}$ basis elements of $P_n E_0$  corresponding to 
(up to) two-part partitions have independent images in $\Potts$; and that elements of higher order then do not.
Thus the dimension of the simple head is $2^{n-1}$. 

\newcommand{\cruder}{<}

\ignore{{ 
For example consider the three-part partition 
$q= \{\{1\},\{2\},\{3\}\}$.
In general we write $p \cruder p'$ 
to mean that $p$ is a 
cruder partition than $p'$.
Thus 
$ \{\{1,2\},\{3\}\} \cruder  \{\{1\},\{2\},\{3\}\} $  
and so on.

$
\Potts(\{\{1,3\},\{2\}\}) = \mat
1&0&1&0&0&1&0&1
\tam ,
$
$
\Potts(\{\{2,3\},\{1\}\}) = \mat
1&0&0&1&1&0&0&1
\tam .
$
So  
$$
\Potts(q) \; - \; 
\sum_{p < q} 
\left( \Potts(p) -\sum_{p'<p} \Potts(p') \right)
\;=\; \mat 
0&0&0&0&0&0&0&0
\tam 
$$

}}

\medskip

To illustrate this, and the generalisation to 
determine $\dim\Potts(P_n E_0)$ for 
all $Q$ that we will use, 
we first introduce a bit more notation.

\mdef  \label{pa:(123)}
We may  
follow \cite[Ch.8]{Martin91} in writing $(12)(3)$ for the set $\{ \{1,2\},\{3\}\}$, and so on, just to avoid curly-bracket overload.

\newcommand{\sh}{sh}

\mdef  \label{de:shape}
The {\em shape} $\sh(p)$ of a set partition $p$ is the list of sizes of its parts arranged as an integer partition. Thus $\sh: \Part_S \rightarrow \Lambda_{|S|}$. 
Example: $\sh((12)(3)(4)) = (211)\vdash 4$.

\mdef  \label{pa:floor1}
Recall 
from (\ref{pa:Pn}) that 
we 
write $\rmP_{i,j}$ 
for the partition basis of 
the morphism set $\Pcat(i,j)$
(and  
may abbreviate $\rmP_{n,n}$ as $\rmP_n$).
Thus there is a unique element in each 
$\rmP_{i,j}\siz_1$. 
And $\;\rmP_{n,0}\siz_2  \;\cup\;  \rmP_{n,0}\siz_1 $ 
yields the basis of 
the image under $\Potts$  of 
$P_n E_0  \cong \Pcat(n,0)$  in (\ref{pa:2bas}) above
(when $Q=2$),
noting (\ref{de:PnE0basis}).  
\hspace{1cm} 
Example:
\beq  \label{eq:P123}
\rmP_{3,0}\siz_1 = \{   (123)  \}
,\qquad 
\rmP_{3,0}\siz_2 = \{   (12)(3), (13)(2), (1)(23)  \}
,\qquad 
\rmP_{3,0}\siz_3 = \{   (1)(2)(3)  \}
\eq

\mdef 
For any set $S$ a function $f:S\rightarrow T$ to any target defines an equivalence relation, and hence a partition 
$\varpi(f)$
of $S$, by $a\sim b$ if $f(a)=f(b)$.
Thus in particular, fixing $Q \in \N$,  
there is a partition $\varpi(w)$ of $\ul{n}$ for every word 
$w$ in $\ul{Q}^n$
as in (\ref{de:Potts}). 
For example $\varpi(112)=(12)(3)$. 

Observe that 
the image of the map $ \varpi : \ul{Q}^n    \; \rightarrow \;\; \rmP_{\ul{n}} \; $
is $\; \sqcup_{i=1}^Q \; \rmP_{\ul{n}} \siz_i $. That is, 
$$
\varpi : \ul{Q}^n    \; \rightarrow \;\;  \sqcup_{i=1}^Q \; \rmP_{\ul{n}} \siz_i 
$$
is  well-defined and surjective.

\mdef \label{lem:orb1}
Keep $Q \in \N$ fixed. 
Observe here that the set $\varpi^{-1}(\varpi(w))$ is the orbit of 
$w \in \underline{Q}^n$ under the diagonal $\Sym_Q$ action as in (\ref{pr:SWdual01}). 
Thus $| \sqcup_{i=1}^Q \; \rmP_{\ul{n}} \siz_i |$ 
is the number of $\Sym_Q$ orbits. 
For example, when $Q=2$, 
$\varpi^{-1}((12)(3)) = \{ 112, 221 \}$;
and 
when $Q=3$,  
$\varpi^{-1}((1234)) = \{ 1111, 2222, 3333  \}$
and
$\varpi^{-1}((12)(3)(4)) = \{ 1123, 1132, 2213, 2231, 3312, 3321  \}$.

Aside: To
choose representatives for the $\Sym_Q$ orbits  
we could use
dictionary order
for example.
Fixing $Q$ and $p \in \sqcup_{i=1}^Q \; \rmP_{\ul{n}} \siz_i  $,
let us write $\varpi^- (p)$ for the representative in $\varpi^{-1}(p)$. 
For example $\varpi^- ((12)(3))=112$.

\mdef  \label{eg:123}
Considering the partitions of $\ul{3}$ organised as in (\ref{eq:P123}) 
we have (with $Q=2$):
\beq \label{eq:123}
\Potts((123)) = \mat 1 &0& 0& 0& 0&0&0&1  \tam^T 
, \qquad 
\eq 
\beq  
\Potts((12)(3)) = \mat 1 &1& 0& 0& 0&0&1&1  \tam^T 
= \mat 1&0&0&1 \tam^T \otimes \mat 1&1 \tam^T 
\eq 
In fact this is the first instance so far in which the Kronecker convention in $\Mat$ is relevant to how we write the image.
Here we say that the possible states of vertices 123 in the $Q=2$ state case are ordered as 111, 112, 121, 122, 211, 212, 221, 222.
(All other cases will be clear from this.)
This is sometimes called aB convention since in writing $A\otimes B$ the first block we write is given  by $B$ multiplied by the first entry of $A$. 

We also have, still with $Q=2$,
\[
\Potts((13)(2)) = \mat 1 &0& 1& 0& 0&1&0&1  \tam^T 
, \qquad
\] \beq \label{eq:1-23} 
\Potts((1)(23) = \mat 1 &0& 0& 1& 1&0&0&1  \tam^T
= \mat 1&1 \tam^T \otimes \mat 1&0&0&1 \tam^T
\eq 
Clearly these (\ref{eq:123}-\ref{eq:1-23}) are all independent of each other. 
Meanwhile 
$$
\Potts((1)(2)(3)) = \mat 1&1&1&1&1&1&1&1 \tam^T 
$$
is not then also independent. 

\mdef
Examples with $Q=3$. $\;$ 
The states of vertices 123 in case $Q=3$ are
given by
$\; \ul{3}^3 = $ 
\\  $\mbox{ }$  $\;$ \hspace{-.321in} 
$\{ \!\!\!\!$
{\tiny{ 
111, 112, 113, 121, 122, 123, 131, 132, 133, $\;$
211, 212, 213, 221, 222, 223, 231, 232, 233, $\;$ 
311, 312, 313, 321, 322, 323, 331, 332, 333. 
}}  $\!\!\!\!\!\!\}$
\\ 
The orbits of the diagonal action of $\Sym_3$ are \quad 
111, 222, 333, \qquad
112, 113, 221, 223, 331, 332, \qquad 
121, 131, 212, 232, 313, 323, \qquad
122, 133, 211, 233, 311, 322, \qquad
123, 132, 213, 231, 312, 321.
\\
Thus we only need to give the five vector entries in a transversal of this partition to give any $\Potts(p)$ for $ p \in \Part_{\underline{n}}$
(here with $n=3$).  

Consider 
$\ul{3}^5$ (243 words altogether). 
Representatives of $\Sym_3$-orbits 
can be organised according to integer partitions of 5 into at most 3 parts
(thus (5), (41), (32), (311), (221)), 
these being the possible outcomes of $\sh(\varpi(w))$.
A complete list of representatives
are contained in the sets: 
$r_{(5)} = \{ 11111 \}$,  \qquad 
\( r_{(41)} =\{ 11112, 11121, 11211, 12111, 21111 \} \),  \qquad 
\\ 
\( r_{(32)} =\{ 11122, 11212, 11221, 12112, 12121, 12211, 21112, 21121, 21211, 22111 \} \), \qquad 
\\ 
\( r_{(311)} =\{ 11123, 11213, 11231, 12113, 12131, 12311, 21113, 21131, 21311, 23111 \} \), \qquad 
\\ 
\( r_{(221)} =\{ 11223, 11232, 11322, 12123, 12132, 13122, 12213, 12312, 13212, 
12231, 12321, 13221, 31122, 31212, 31221 \} \).
Note, one
representative 
in an orbit of size 3 and 40 in orbits of size 6 --- a 41d space.
Of course these correspond to the 41 set partitions of $\ul{5}$ of size at most 3.

Thus to give $\Potts((1)(2)(3)(45))$ we only need to give $\Potts(p)_w$
for $w$ in the lists above. For example 
$\Potts(p)_{11111} =1$; $\Potts(p)_{11112} =0$, 
$\Potts(p)_{11121} =0$, $\Potts(p)_{11211} =1$ and so on.

\mdef 
Let $S$ be any given set, and $p,q \in \rmP_S$. 
Write $p \prec q$ 
(or $q \succ p$)
to denote that partition $p$ is less refined than $q$.
For example
$(123)\prec (12)(3) \prec (1)(2)(3)$.
Note that this gives a partial order on $\rmP_S$. 

\mdef   \label{lem:basis1}
Observe 
from (\ref{de:Potts})
that $\Potts(\varpi(w))_w =1$ (recall words $w$ index vector elements here, cf. (\ref{pr:SWdual01})). 
And observe that for every 
partition $q\nsucceq \varpi(w)$ we have $\Potts(q)_w =0$.
This verifies the independence of the set of 
images of 
partitions into at most $Q$ parts.

Then 
with (\ref{lem:orb1}) 
- taking account of the diagonal action of $\Sym_Q$ - 
and (\ref{lem:norbits}) 
it also yields that the images of these partitions form a spanning set
of our space $\Potts(\Pcat(n,0) \cong P_n E_0)$.
\\
Specifically in our example (\ref{eg:123}) above we have 
$\varpi^{-1}((123))=\{111,222\}$, 
$\varpi^{-1}((12)(3))=\{112,221\}$, 
$\varpi^{-1}((1)(23))=\{122,211\}$, 
$\varpi^{-1}((13)(2))=\{121,212\}$, 
whose union is $\ul{2}^3$, so $\Potts((1)(2)(3))$ must be {\em dependent}.

\mdef 
Aside:
Observe in the  
`Potts rule' 
(see (\ref{de:Potts}))
that 
different parts may have different
colours, but of course if there are only two colours then if there are more than two parts some of them will have the same colour, and potentially be indistinguishable from cruder partitions. 
We deduce that $\Potts$ has a significant kernel.

Consider in particular the representation of $P_n(Q)$ given by $\Potts$. 
This is a representation on $(\C^Q)^{\otimes n}$, and hence of dimension $Q^n$.
Furthermore the image is a *-algebra and hence a semisimple quotient of $P_n(Q)$.
Indeed the corresponding quotient tower gives a Jones basic construction 
in the sense, for example, of \cite{GoodmanDelaharpeJones89}
(in fact two such).

\newcommand{\ff}{{\mathsf{f}}} 

An example of an element in the kernel is the 
unnormalised Young antisymmetrizer of rank $Q+1$
(in $P_n$ for any $n>Q$ by the natural inclusion). 
We write this for now as 
$\ff_{Q+1} = \sum_{g \in S_n} (-1)^{len(g)} g$. 
For example $\ff_2 = (11')(22') - (12')(21')$.
Evidently this vanishes in $\Potts$ when $Q=1$. 
More generally the image of $\ff_{Q+1}$ vanishes whenever we have symmetry under the exchange of two vertices among $Q+1$ - which holds in $\Potts$ for any element of $P_n$ since we only have $Q$ colours with which to distinguish vertices.

\mdef
We have seen 
that the partition category action commutes with the action of the symmetric group $\Sym_Q$ permuting the `colours' 
(as opposed to the vertices)
- the diagonal action on the corresponding tensor spaces. 
This is the Potts symmetry (of the Potts model). 
Indeed these actions are in (an appropriate generalisation of) Schur--Weyl duality.


\section{
Proof of semisimplicity condition 
for the   ordinary 
$\dl =1$ case
recalled}   $\;$ 
\label{ss:review}

In this paper we follow and extend the original method  
used for the partition algebras themselves.
In this subsection we 
again 
assume familiarity with the partition algebra
(and partition category), 
and hence also the  
group algebra of the symmetric group $\Sym_n$,
and can then give a convenient brief overview of our approach in terms of the partition algebra case.

\medskip




\subsection{Gram matrices, tower representation theory and semisimplicity}
$\;$ 
\label{ss:gram-tower-semi}

\ignore{{ 
\begin{figure} 
\[
\includegraphics[width=12.8cm]{figs/PnBratelli8ii.eps}
\]
\caption{Augmented Bratelli diagram for $P_0 \subset P_1 \subset ...\subset P_4$. 
Vertices are standard modules with index as shown at the top of their column; and dimension shown in the box. Black edges indicate restriction rules (with multiplicities) so dimensions can be checked.
\textcolor{green}{Green} arrows indicate morphisms for $\delta=1 \in \C$
(see main text for commentary).
\textcolor{pink}{Pink} arrows indicate  morphisms for $\delta=2$.
(Morphisms for other $\delta$s 
omitted 
to avoid clutter.)
\label{fig:PnBratelli}}
\end{figure}
}}

\ppmy{ 
\mdef 
\ppm{[red since now copied in Intro!:]
Over $\C$ the algebras $P_n$ are generically semisimple, 
and in all cases with $\delta\neq 0$ the standard modules give a basis for 
the Grothendieck group,
so much of their combinatorial representation theory  
is encoded in the standard restriction diagram 
(generically the Bratelli diagram, or else 
a suitably enhanced version thereof)
for the tower of algebras 
$P_n \subset P_{n+1}$
in the generic case.
See Fig.\ref{fig:PnBratelli}.
\\
In Fig.\ref{fig:PnBratelli}  
we also record the morphisms between standard modules 
in the cases $\delta = 1,2$.
The Figure encodes a lot of information. However here we
will only need some specific observations from it. 
[-Delete this now!]
}
}

Like Figure~\ref{fig:PnBratelli}, 
the gram matrices for the standard 
modules also 
contain a lot of information, but again 
we will only need parts of it, 
as in (\ref{de:Pn-spine}),
that are relatively readily accessible. 
To illustrate, 
consider the examples given 
explicitly in (\ref{eg:gramSn0}-\ref{eg:gramS30}). 



\mdef  \label{pa:gram-det-order}
The general $n$ form of the gram determinant 
$\det(\Gram_n(0))$
is somewhat non-trivial. But it is 
easy to see that the exponent of factor $\delta$ is 
the dimension of the module
(which is the Bell number $B(n)$ from (\ref{de:bell1})); 
and the total 
polynomial degree is 
$\sum_{l=1}^n l S(n,l) $ 
(coincidentally also dim$(\SSS_n((1)))$)
--- this is the sum of the exponents down the main diagonal, by the composition rule; 
and it is straightforward to check that this gives the highest order term in the Laplace expansion.

\mdef  \label{pa:simpleS0}
On the other hand we observe that the polynomial 
$\det(\Gram_n(0))$
must,
on representation theory grounds, 
contain factors as follows. 
(NB, we sometimes write $\SSS_n(0)$ for $\SSS_n(\emptyset)$ - purely for aesthetic reasons.)
The head of $\SSS_n(\lambda)$ 
when working over a field
depends on the specialisation $\delta\leadsto\delta_c$. For given $\delta_c$ we write $\DDD_n^{\delta_c}(\lambda)$ for the head,
as in (\ref{lem:ss01}).
For the factors $(\delta -1)$ in the determinant we observe that 
dimension of head of $\SSS_n(0)$ 
when $\delta_c =1$
is  $\dim(\DDD_n^{1}(0)) = 1$. 
We can observe this directly in our low rank 
diagram in Fig.\ref{fig:PnBratelli};
or in general by inspecting the Potts representation.
Thus the rank of the 
morphism (unique up to scalars, since necessarily head to socle) from 
standard module $\SSS_n(0)$ to 
costandard module $\nabla_n(0)$   
is 1. 
The gram matrix $\Gram_n(0)$ intertwines these two modules over any ground ring.
By (\ref{eq:EPE}) it
always maps the head to the socle, 
but its rank can
vary in specific specialisations (evalutations of $\delta$).
In order to have the right rank when $\delta=1$, 
it follows that all but one entry in the Smith form 
of $\Gram_n(0)$ over $\Z[\delta]$ (strictly speaking for a Smith form it should be over, say,
$\C[\delta]$, but here it makes no difference)
has a factor $(\delta -1)$.
So there are at least dim$(\SSS_n(0))$-1 such factors
in the determinant. 
That is, writing 
\beq  \label{eq:gram1} 
\det(\Gram_n(0)) = \prod_{\delta_c} (\delta-\delta_c)^{\alpha_{\delta_c}} 
\eq 
we have 
a bound on the exponent:
$\alpha_1 \geq \dim(\SSS_n(0))-1$.
A similar argument gives a bound 
\beq \label{eq:bound1}
\alpha_Q \geq \dim(\SSS_n(0))-\dim(\DDD_n^Q(0))
\eq 
on the number of
factors of each form $(\delta-Q)$ for $Q \in \N$. 
In general we have
\beq   \label{eq:dimD1} 
\dim(\DDD_n^Q(0))  \;=\; 
 \sum_{l=1}^{Q} S(n,l)
\eq 
arguing as in (\ref{pa:dimD0} - \ref{lem:basis1}). 
Hence
\[
\dim(\SSS_n(0))-\dim(\DDD_n^Q(0))  \;=\; 
\sum_{l=1}^n S(n,l) - \sum_{l=1}^{Q} S(n,l)
\; = \; \sum_{l=Q+1}^{n} S(n,l)
\]
Summing we obtain an overall bound for the polynomial degree
\[
degree(|\Gram_n (0) | )    \; \geq \; 
\sum_Q \sum_{l=Q+1}^{n} S(n,l) 
\; = \; \sum_{l=1}^n l S(n,l)
\]
By 
our earlier observation in (\ref{pa:gram-det-order})
this is saturated.
Thus the formulae (\ref{eq:bound1}) are equalities; and 
all factors in (\ref{eq:gram1}) come from 
these $Q$ values. Thus this gram determinant does not vanish for any other $\delta$ value. 
Thus $\SSS_n(0)$ is simple for all $n$ for all other $\delta$ values. 

\medskip

On the other hand we have the following. 

\mdef \label{pa:indicator}
\ignore{{ 
Finally we 
argue
by Frobenius reciprocity 
using properties of the standard module restriction rules (as indicated in Fig.\ref{fig:PnBratelli} for example), 
that 
}}
If $\delta_c \neq 0$ and 
$\SSS_n(0)$ is simple for all $n$ then 
$P_n (\delta_c)$ is semisimple for all $n$.

\medskip 

\newcommand{\ind}{Ind\;}
\newcommand{\res}{Res\;}
\newcommand{\mul}{\lambda}
\newcommand{\nul}{\nu}
\newcommand{\bigplus}{\sum} 
\newcommand{\lleadsto}{\mbox{\reflectbox{$\leadsto$}}}

We 
now 
briefly recall the argument for this from \cite{MartinSaleur94b}, in preparation
for the 
modifications needed for the 
2-tonal case (which is in \ref{frecip}).
%

\noindent 
(I)
Suppose $n$ fixed. 
For brevity we assume the construction and basic properties of
the set $\SSS$ of standard $P_n$-modules $\SSS_n(\lambda)$ with 
$\lambda$ an integer partition of $l\leq n$. 
We write $|\lambda| = l$, as in (\ref{pa:globloc101}).

If a module $M$ has a filtration by a set ${\mathcal T}$ of modules we say that it has 
a ${\mathcal T}$-filtration.

\noindent
(II)
Note that if $P_n$ is not semisimple then there must be a homomorphism
$\psi: \SSS_n(\mul) \rightarrow \SSS_n(\nul)$ for some 
distinct $\mul,\nul$;
in particular with $|\mul| > |\nul |$
(the latter condition comes from localisation functors and Maschke's Theorem,
considering (\ref{pa:globloc101})
and in particular (\ref{eq:uppertriangular});
or equivalently from quasiheredity).
And hence in particular some $\SSS_n(\nul) $ is not simple. 
So it is enough to prove that all $\SSS_n(\nul) $ are simple. 
We do this by induction on $|\nul|$.
\ignore{ 
Assuming $\delta \not\in \N_0$ then it 
is straightforward to eliminate cases with $|\mul| = |\nul | +1$,
so we consider $|\mul| > |\nul | +1$.
}

\noindent(III) 
We will use the reciprocity 
\beq \label{eq:FR}
\Hom(\Ind \SSS_n(\mul), \SSS_{n+1}(\nul) ) \cong \Hom(\SSS_n(\mul) , \res \SSS_{n+1}(\nul) )
\eq 
where the $\Ind$ and $\res$ functors are those associated to 
the homomorphism $ P_{n} \hookrightarrow P_{n+1}$ given by 
$p \mapsto p\otimes 1$. 
By the usual $P_{n} \hookrightarrow P_{n+1}$ 
Induction/Restriction rules 
(as indicated,
at least at the level of characters, 
in Fig.\ref{fig:PnBratelli} for example), 
we have 
\beq \label{eq:Indres}
\Ind \SSS_n(\mul) \;\lleadsto  \;
\{ \SSS_{n+1}(\rho) \; | \;    \rho \in \mul\pm \square   \}
,
\qquad \hspace{.21in} 
\res \SSS_{n+1}(\mul) \;\lleadsto  \; 
\{ \SSS_{n}(\rho) \; | \;    \rho \in \mul\pm \square  \}
,
\eq 
where $\lleadsto {\mathcal T}$ indicates 
${\mathcal T}$-filtration; 
and $\lambda\pm\square $ indicates 
a set of factors 
adjacent to $\lambda$ in the Young graph (add or remove a box from the Young diagram, or add then remove), and hence 
with weights of rank between $|\lambda|+1$ and $|\lambda|-1$.
A schematic for this is:
\beq  \label{eq:FR111}
\includegraphics[width=10cm]{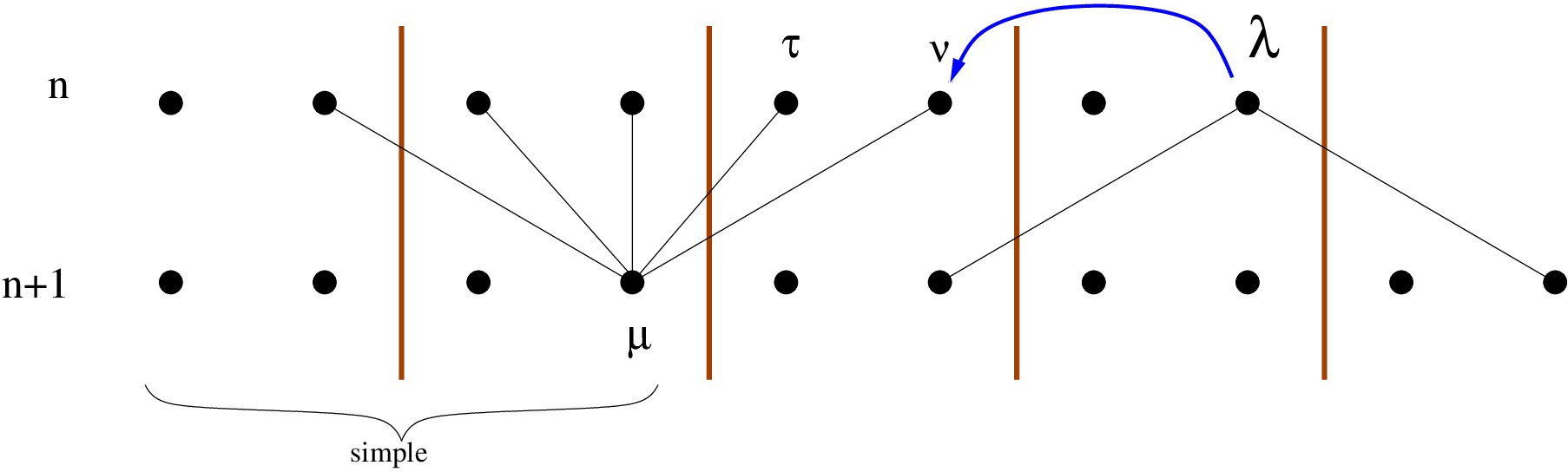}
\eq
where the brown walls 
represent
partitioning of 
the index set by increasing rank;
and the black lines indicate the factors of 
$\res\SSS_{n+1}(\mu)$ and $\Ind\SSS_n ( \lambda)$ respectively. 

\noindent
(IV)
So now suppose for an induction that modules 
$\SSS_n(\mu)$
of ranks $|\mu|$ up to $l$ are simple 
(for all $n$ - hence for fixed but arbitrary $n$).
The case we have 
concluded in (\ref{pa:simpleS0}) above - rank 0 - will 
eventually be the base of induction,
but for this proposition the base is exactly the hypothesis.
Excluding $\delta=0$ this implies no maps $\psi$ in
to $\SSS_n(\mu)$ (apart from self-isomorphism), 
so then in particular
for every $\mu$ of rank $|\mu| =l$ we have 
$\; \Hom(\Ind \SSS_n(\mul), \SSS_{n+1}(\mu) ) =0 \;$ 
for all $\mul$ of rank at least $|\mu|+2$, 
by 
the $\SSS$-module restriction rule from (\ref{eq:Indres}),
the upper-triangular simple decomposition matrix property
(\ref{eq:uppertriangular}), and Schur's Lemma.
But suppose (for a contradiction, in the inductive step to $l+1$) some 
$\SSS_n(\nu)$ with 
$\nu$ of rank $l+1$ 
is not simple, and hence has a map in, from $\SSS_n(\lambda)$ say. 
See the figure in (\ref{eq:FR111}).
The factors in any  
{$\res \SSS_{n+1}(\mu)$} as above are either in their own block by 
the inductive assumption, or of equal rank to $\nu$ and hence
(cf. (\ref{pa:globloc101}))
not involved
in extensions with $\SSS_n(\nu)$,
and for a suitable $\mu$ they include $\SSS_n(\nu)$. 
Thus there is a map in 
$  \Hom(\SSS_n(\mul) , \res \SSS_{n+1}(\mu) )  $. 
This then contradicts (\ref{eq:FR}). 
We deduce that every $\SSS_n(\nu)$ with 
$\nu$ of rank $l+1$ is simple, which completes the inductive step.

\medskip

\mdef 
Given (\ref{pa:indicator}) and (\ref{pa:simpleS0}) we have that $P_n$ is semisimple over $\C$
unless $\delta \in \N_0$. 


\noindent 
(Cf. in particular the formulation of our Theorem~\ref{th:main} here.)




\medskip

\section{General preparations for the 2-tonal ($d=2$) case}
\label{ss:maintheo}

In  section~\ref{ss:2tonal} we will prove our main 
result, 
Theorem~\ref{th:main}.
In \S\ref{ss:tonal-prep} we 
appropriately 
generalise the Potts functor machinery of \S\ref{ss:PottsFunctors}.
In \S\ref{ss:gramgen00} we assemble the gram matrix machinery that we will use,
generalising \S\ref{ss:gram-tower-semi}. 


\subsection{Potts functor preparations for the tonal cases}  \label{ss:tonal-prep}  $\;$

\mdef
For later use it is convenient to  
recall that even partitions span a 
monoidal subcategory of $\Pcat$. 
This category is monoidally generated by 
$\{\{1,2\}\} \in \Pcat(2,0)$,  
$\;  \{\{1',2'\}\} \in \Pcat(0,2)$,
$\; \{\{1,2'\},\{1',2\}\} \in \Pcat(2,2)$,
and
$\{\{1,2,1',2'\}\} \in \Pcat(2,2)$.
$\;$ \confer{\S\ref{ss:PottsFunctors}}

The $\Potts$-images of these generating elements for $Q=2$ are
\[
\Potts( \{\{1,2\}\} ) = \mat  1 \\ 0 \\ 0 \\ 1 \tam ,
\hspace{.241in} 
\Potts( \{\{1',2'\}\} ) = \mat 1&0&0&1 \tam ,
\hspace{.241in} 
\Potts( \{\{1,2,1',2'\}\} ) = \mat 1&0&0&0 \\ &0 \\ &&0 \\ 0&0&0&1 \tam 
\]
and as in (\ref{eq:Potts12}).

\mdef  \label{pa:12-34}
Still with $Q=2$, 
for example
\[
\Potts((12)(34)) = \mat 1&0&0&1 \tam^T\otimes\mat 1&0&0&1 \tam^T  
\hspace{1in} 
\] \[ \hspace{.6in} 
 = \mat 1&0&0&1&0&0&0&0&0&0&0&0&1&0&0&1 \tam^T 
\]
(Recall the basis here is 
{\tiny{1111, 1112, 1121, 1122, 1211, 1212, 1221, 1222, 2111, 2112, 2121,2122,2211,2212, 2221, 2222}}.)

\mdef 
Observe that these matrices have the symmetry not only of commuting with the 
categorified diagonal $\Sym_Q$ matrix action \cite{Martin08a}, but also with the categorified diagonal action of the group 
of signed permutation matrices
(for the non-categorified version see e.g. \cite{benkart2016schur,benkart2017walks}). 
This group, and hence action, is generated by the $\Sym_Q$ matrices  
together with the signed unit matrices.

\mdef 
For example, for $Q=2$ it is sufficient to check the action of matrix $diag(1,-1)$:  
\[
\left( \mat 1&0 \\ 0& -1 \tam^{\otimes 2}  \right) 
\mat 1 \\ 0 \\ 0 \\ 1 \tam  =  
\left( \mat 1&0 \\ 0& -1 \tam  \otimes \mat 1&0 \\ 0& -1 \tam  \right) 
\mat 1 \\ 0 \\ 0 \\ 1 \tam  =  
\mat 1 \\ & -1 \\ && -1 \\ &&&& 1 \tam 
\mat 1 \\ 0 \\ 0 \\ 1 \tam
\] \[ 
=\mat 1 \\ 0 \\ 0 \\ 1 \tam (1) 
=\mat 1 \\ 0 \\ 0 \\ 1 \tam
\mat 1&0 \\ 0& -1 \tam^{\otimes 0}
\]

Observe then that in addition to vectors in $\Potts(\rmP_{n0})$ being eigenvectors 
(with unit eigenvalue) of the diagonal $\Sym_Q$ action, 
{the vectors in   $\Potts(\rmP^2_{n0})$  }
are also 
eigenvectors 
for the diagonal action of the signed permutation matrices, and hence in particular for the diagonal action of the signed unit matrices.
(Hence they intertwine a single irreducible representation - the trivial representation - and so form a simple representation themselves of $P^2_n$, by Schur's Lemma.)

For this diagonal action in rank $n$, consider the example
\[
\mat 1&0 \\ 0& -1 \tam^{\otimes 4} 
 = diag( \; 1,-1,-1,\; 1,-1,\; 1,\; 1,-1,-1, \; 1, \; 1,-1, \; 1,-1,-1, \;1) 
\]
cf. (\ref{pa:12-34}).
The sign here depends on the number of 2's in the word.
And in general on the number of $i$'s for all $i$. 
For invariance (eigenvalue 1) we see that a vector entry in $\Potts(p)$ should be zero 
unless the word has all multiplicities being even.
(The generalisation to any $d$ will be clear.) 

\mdef  
Since we have already established 
in (\ref{lem:basis1})
that the images of partitions of up to $Q$ parts are independent, we deduce that 
(for $n$ even) 
the images of even partitions of up to $Q$ parts give a basis of 
the image of 
the module
$P^2_n E_0 \cong \Pcat^2(n,0)$.
(In the notation of \cite{amm2019tonal}, 
$\cell_\nk(\emptyset, \emptyset) =    
P^2_n E_0 $.)
Observe from the semisimplicity of $\Potts(P^2_n)$ and Lemma~\ref{pa:dimD0} 
(or Schur's Lemma) that the image is isomorphic to the simple head. 


Recall 
from \ref{de:bell1} that 
$\T(m,l) := 
       \left|    
             \rmPe_{2m,0}\siz_l \right|  \;$
- i.e. the number of even partitions into $l$ parts of $\underline{2m}$.

Hence we have established the following Lemma, paralleling (\ref{eq:dimD1}).

\begin{lem}\label{dimL}
Let $\delta =l \in\N$, 
and $n$ even. Then
$$
\dim head(\cell_\nk(\emptyset, \emptyset)) 
\; =  \;
\sum_{t=0}^{l} \T(\frac{\nk}{2}, t)
$$
{(the sum 
runs from $t=0$ to include the  
$n=0$ case).}
\qed
\end{lem}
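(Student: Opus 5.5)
The plan is to follow the argument of (\ref{pa:dimD0})--(\ref{lem:basis1}) for the ordinary case, transplanted to $P^2_n(l)$ with $Q=l$. Take the heredity preidempotent $E_0=b_0^n\in P^2_n$ (it is $2$-tonal when $n$ is even). Then $\cell_n(\emptyset,\emptyset)=P^2_nE_0\cong\Pcat^2(n,0)$, with basis the even set partitions of $\underline{n}$, and $E_0P^2_nE_0=\delta\,\C E_0$. Since $\delta=l\neq0$, the element $\frac1l E_0$ is a primitive idempotent, and $P^2_nE_0$ has a simple head.

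The first step is to restrict the Potts representation $\Potts$ at $Q=l$ to $P^2_n$. The image $\Potts(P^2_n)$ is closed under transpose, since the flip $*$ maps to transpose. It is therefore a $*$-algebra of real matrices and hence semisimple. We also have $Trace(\Potts(\frac1l E_0))=1$, because $\Potts(E_0)=vv^T$ with $v=\Potts(\w{n})$ the indicator of the $l$ constant words, so the trace is $l$. Lemma~\ref{lem:SSAe} then shows that $\Potts(P^2_nE_0)\cong\Potts(P^2_n)\Potts(E_0)(\C^l)^{\otimes n}$ is isomorphic to the head of $P^2_nE_0$. So $\dim head(\cell_n(\emptyset,\emptyset))=\dim \Potts(\Pcat^2(n,0))$, the span of the vectors $\Potts(p)$ for $p\in\rmPe_{n,0}$.

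The second step computes that span. For \emph{independence}, restrict (\ref{lem:basis1}) to even partitions with at most $l$ parts. For each such $p$, choose a word $w$ with $\varpi(w)=p$; this is possible since $p$ has at most $l$ parts. Then $\Potts(p)_w=1$, while $\Potts(q)_w=0$ for every $q\nsucceq p$. Ordering by refinement gives a triangular system, so the vectors $\Potts(p)$ are independent. For \emph{spanning}, note that every $\Potts(q)$ with $q$ even is invariant under the diagonal action of the signed permutation matrices. Hence it lies in the space of vectors constant on $\Sym_l$-orbits and supported on words in which every colour occurs an even number of times. That space has a basis indexed by orbits of such words, i.e. by $\varpi(w)$, which is an even partition with at most $l$ parts, by (\ref{lem:orb1}). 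So its dimension is $\sum_{t\le l}\T(n/2,t)$. The images of the even partitions with at most $l$ parts are independent and lie in this space, so they span it. The bound is therefore saturated and the formula follows. The $t=0$ term only contributes when $n=0$, where both sides equal $1$.

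I expect the main obstacle to be the invariance claim in the spanning step: every entry of $\Potts(q)$ at a word with some colour of odd multiplicity vanishes. I would check this directly from the Potts rule rather than through the sign action. A nonzero entry requires each part of $q$ to be monochromatic, and since each part has even size, every colour multiplicity is a sum of even numbers.
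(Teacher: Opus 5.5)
Your proposal is correct and follows essentially the same route as the paper. Like the paper, you restrict the Potts representation at $Q=l$ to $P^2_n$, and use the transpose-closed (hence semisimple) image together with Lemma~\ref{lem:SSAe} and $\mathrm{Trace}(\Potts(E_0))=l$ to identify the head with the span of $\Potts(\Pcat^2(n,0))$; you then take independence from (\ref{lem:basis1}) and spanning from the fact that even partitions have Potts vectors supported on words with all colour multiplicities even. Your direct Potts-rule check of that vanishing, and your explicit orbit count for the spanning step, usefully spell out details the paper only sketches via the signed-permutation invariance.
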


\ignore{{ 
\ppm{[DELETE:]}
NB: If $\delta =0$ then 
\ppm{formally??}
$\dim L(\emptyset, \emptyset) =0$. 
($P^2_\kn(\delta)$ is
cellular but not quasi-hereditary in this case. $L$ is defined to be
the cell module over its radical.
\ppm{[-this obviously does not work as written. but easy to fix.]})
}}


\section{The 2-tonal ($d=2$) case} $\;$ 
\label{ss:2tonal}


In \S\ref{ss:2tonal}-\ref{ss:avengers-end-game} 
we focus on the tonal case $\dell=2$. 
We show that the
2-tonal partition algebra 
$P^2_n(\delta)$ 
is semisimple over $\C$ 
for all $n$, 
for all $\delta\in\C$ 
except  
$\delta \in \N$.
{(Recall that 
in this paper we have the `statistical physics perspective', focusing on results that hold for large $\nk$, i.e. in the thermodynamic limit \cite{Martin91},
and 
formulations that 
hence 
hold
in practice for all $n$ 
(since partition algebras have global limits
in the sense of \ref{de:GA}),
like absence of non-split extensions. 
It is a straightforward exercise using our results to determine the specific integral values of $\delta$ where non-semisimplicity first arises for given $n$.)}

We  
prove the result 
by first showing that we can reduce this problem to establishing that
the ``spine'' module $\spine = P^2_n E_0$
(when $n$ is even)  
is simple for all
even $n$;
with a similar result for odd $n$, see Lemma~\ref{frecip}. 
We then show this module is simple 
for non-integral $\delta$ 
by finding the degree of the Gram
determinant in $\delta$ and then showing that this degree is saturated by the
integral roots.
(This method is used in \cite{MartinSaleur94b}
for the ordinary case,
and has been reviewed here in \S\ref{ss:review}.)

\mdef   \label{pa:Pn2mod}
Observe (e.g. from \cite[Th.9.7]{amm2019tonal}) that 
$P^2_n$ is generically semisimple over $\C$, and 
the modular properties of $P_n$ as in (\ref{lem:ss01}) carry over to $P^2_n$. 
The index set for simples is  
recalled 
in (\ref{pa:Trep})
below.
A preferred choice of integral lattice forms, given via (\ref{pa:Trep}),
again yields {\em standard} modules for every specialisation.




\newcommand{\WW}{W^2}
\newcommand{\WWW}{W_b^2}
\newcommand{\oWW}{\overline{W}^2}
\newcommand{\oWWW}{\overline{W}_b^2}

\mdef  \label{pa:globloc2}
We will  
use a version of (\ref{pa:globloc101}) for $P^2_n$. $\;$
For given $n$ note that $\WW$ and  $\WWW$ from (\ref{pa:U}) are in $P^2_n$ (these require $n\geq 2$ and $n\geq 3$ respectively).
Note that $\WWW$ is idempotent, and 
if $\delta$ is invertible $\WW$ is also preidempotent. 
\ppmx{[fixed now?: -this seems false at the moment! shall we define $\W$ as $\W_n$ in the cases $n>2$? ...Could define it as $\U$ in case $n=2$, and point out that this is not idempotent! - although this seems ugly!]}
Recall then for example from \cite{amm2019tonal} that 
$\WW P^2_n \WW \cong P^2_{n-2}$;
and that the quotient algebra
$$
A^2_n = P^2_n / P^2_n \WW P^2_n
$$
is semisimple over $\C$ - see in particular 
\cite[(9.3)]{amm2019tonal}. 
\\
In  \cite[(9.3)]{amm2019tonal}   
we see also that the same identities hold replacing $\WW$ with 
$\WWW$, or $\oWW$, or $\oWWW$. 
\caa{For example, the following figure (also in 
(\ref{eq:wpw0})) 
schematically indicates,
here for $n=7$, 
the isomorphism between $W^2_b \; P^2_7(\delta) \; W^2_b \;$  and $P^2_5(\delta)$: 
}
\beq \label{eq:wpw} 
\rb{-.22in}{\ig[width= 8.10cm]{wwpwn.eps}}
\eq

\mdef    \label{pa:Trep}
Next, cf. (\ref{pa:globloc102}),
we recall the   corresponding 
index set {for simple modules} 
of $P_\nk^2(\delta)$,  
denoted $ \Lambda(P_\nk^2(\delta)) $. 

For $(l,m) \in \N_0^2 \;$
let 
$\Lambda(\C(\Sym_l\times \Sym_m)) $ denote the index set for simple $\C(\Sym_l\times \Sym_m)$-modules by pairs of integer  partitions 
$(
\lambda, \mu)$, 
where $\lambda$ is a partition of $l$ and $\mu$ is a partition $m$.
{We write $ ( \lambda, \mu) \vdash (l,m) $  for this.
The `rank' is 
$|  ( \lambda, \mu) | \; := \; l+2m$}.

Then  
(from \cite[(9.3)]{amm2019tonal})
an index set for simple $A^2_n$-modules over $\C$ 
is given by 
\ignore{{ 
\beq \label{eq:LAn} 
\Lambda(A^2_n)\; =\; 
\bigcup_{\ppm{(\lambda,\mu) \; s.t. [???]} \; |  ( \lambda, \mu) |=n} 
\Lambda(\C(S_{|\lambda|}\times S_{|\mu|}))
\eq 
\ppm{[-this seems wrong. and very confusing. delete?]} 
}}
\begin{equation}
     \label{eq:LAnn}
\Lambda(A^2_n)\; =\; 
\bigcup_{(l,m) \in \N_0^2 \; s.t. \; l+2m=n} 
\Lambda(\C(\Sym_l\times \Sym_m))
\end{equation} 
In other words the elements are pairs $(\lambda,\mu)$ 
of rank $|(\lambda,\mu)| = n$.

If $\delta\neq0$ then 
from (\ref{pa:globloc2}) 
and (\ref{de:GFfunctors}) 
(cf. ~\cite[(5.9)]{amm2019tonal}) 
we have 
\begin{equation}
  \label{eq:Index} 
\Lambda(P_\nk^2(\delta)) \; =\; 
   \Lambda(A^2_n)\cup \Lambda(A^2_{n-2})\cup \dots \cup \Lambda(A^2_{0/1 }) \; 
\end{equation}
where $0/1=n\bmod{2}.$

Let 
\[
\gamma^{2,n}=\{(l,m)\in \N_0^2\;:\; (n-(l+2m))/2\in\N_0\}
.
\]
And then from (\ref{eq:LAnn})
we have 
\[
\Lambda(P_\nk^2(\delta)) \; =\;
\left\{ \; 
( \lambda, \mu) \vdash (l,m) \;\; | \; (l,m)\in\gamma^{2,n} \right\}    .
\]


\ignore{{We recall from \cite{amm2019tonal}[Theorem 9.7] that the 2-tonal partition algebra $P_\nk^2(\delta)$ is generically semisimple over $\C$. Hence, similar to the ordinary partition algebra there is a modular system for each $\delta\in\C$, this is discussed in \cite{amm2019tonal}[9.8].}} 
For a fixed $n$   we write 
$\cell_n(
\lambda, \mu)$ 
for the standard $P_\nk^2(\delta)$-module labelled by 
$(
\lambda, \mu)\in \Lambda(P_\nk^2(\delta))$, 
as constructed 
by analogy with  \S\ref{ss:brt} / 
(\ref{pa:globloc102})
above 
(and explicitly for example in \cite[\S7.3]{amm2019tonal}). 
If $|(\lambda,\mu)|=n$ then $\SSS_n(\lambda,\mu)$ is the corresponding $A^2_n$ Specht module induced along $\psi: P^2_n \twoheadrightarrow A^2_n$.
Otherwise
if $n>2$ then we can take 
$\; \SSS_{n+1}(\lambda,\mu)  \; =\;  G_{W^2_b} \SSS_{n-1}(\lambda,\mu)$
(recall (\ref{de:GFfunctors})).
The module this excludes is  $\SSS_2(\emptyset,\emptyset)$,
but see (\ref{de:Pn2-spine}) for this.

\mdef  \label{pa:P2head}
When $\delta\neq0$, {or $n$ is odd}, the set 
$\; \left\{ \; head(\cell_n(
\lambda, \mu)) \;:\;(\lambda, \mu)\in \Lambda(P_\nk^2(\delta))
\right\} \;$ 
forms a complete
set of pairwise non-isomorphic simple $P_\nk^2(\delta)$-modules 
(for a proof see e.g. \cite[Lemma 7.14]{amm2019tonal}). 

Note that a version of the upper-triangular decomposition property 
from (\ref{de:GA})
holds
(see e.g. \cite[9.4]{amm2019tonal}).

\newcommand{\oo}{\emptyset,\emptyset}

\mdef  \label{de:Pn2-spine}
In direct analogy to (\ref{de:Pn-spine}) we have 
$ \cell_\nk(\oo ) = P^2_n E_0$
for all $\delta$ and all even $n$;
and 
$ \cell_\nk((1),\emptyset) = P^2_n E_1$
for all $\delta$ and all odd $n$.

\ignore{{ 
Recall 
$ \; 
\T(m,t) = \mbox{number of ways of partitioning 
a set of size $2m$ into $t$ blocks of even size}
$
\footnote{{ 
Remark: Like Stirling numbers, the numbers $\T(m,t)$   obey 
various nice identities (not needed here) such as 
$$
\T(m,t)=\frac{1}{t!2^{t-1}}\sum_{j=1}^t(-1)^{t-j}{2t \choose t-j} j^{2m}.
$$
They are $A156289$ in the Encyclopedia of Integer Sequences~\cite{SloaneOEIS}.
 }}
 }}



\begin{lem}   \label{lem:dimS}
For an even integer $\nk$ we have 
(cf. \ref{dimL})
$$
\dim \cell_\nk(\emptyset,\emptyset) = 
\sum_{t=1}^{\nk/2} \T(\frac{\nk}{2}, t)
$$  
\end{lem}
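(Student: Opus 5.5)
We identify a basis of $\cell_\nk(\emptyset,\emptyset) = P^2_\nk E_0$, in direct parallel with (\ref{de:PnE0basis}) for the ordinary case, and then count its elements by their number of parts.

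First, by (\ref{de:Pn2-spine}) we have $\cell_\nk(\oo) = P^2_\nk E_0$ with $E_0 = \bbb{\nk} = \{\underline{\nk},\underline{\nk}'\}$. Since $E_0$ is even, $P^2_\nk E_0$ is spanned by the products $pE_0$ with $p \in \rmP^2_\nk$. For a partition $p$, the composite $pE_0$ (as in (\ref{de:Pcomposition})) is a scalar power of $\delta$ times a single partition $q$. In $q$ the bottom row $\underline{\nk}'$ forms exactly one part. The top row of $q$ is the restriction of $p$ to $\underline{\nk}$, after merging all top parts that are connected through the bottom row into the single part of $E_0$. All those merged parts can be absorbed into the bottom block, so one has to check which top parts remain separate.

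We verify the following two properties. (a) Every such $q$ is determined by an even set partition $r$ of the top row $\underline{\nk}$: the top-row parts of $q$ not attached to the bottom are even, because they are parts of $p$. The part of $q$ containing $\underline{\nk}'$ has an even number of bottom vertices (since $\nk$ is even) and an even total size by parity (the total count $2\nk$ is even and all other parts are even). Hence the top-row portion of each block is even, so each $q$ is a $2$-tonal partition and its top-row partition $r$ has all blocks even. Here we take the convention, as in (\ref{de:PnE0basis}), that the top parts are disjoint from the bottom, i.e.\ $q = r \cup \{\underline{\nk}'\}$. This requires checking that a top-row part joined to the bottom is identified with a free even part, exactly as in the ordinary case, where $E_0$ has its top and bottom rows as separate parts. (b) Conversely, every even set partition $r$ of $\underline{\nk}$ arises, since $q = r\cup\{\underline{\nk}'\}$ equals $(r\cup\{\underline{\nk}'\}) E_0$ up to a power of $\delta$, and $r\cup\{\underline{\nk}'\}$ is itself $2$-tonal because each of its parts has a $0 \bmod 2$ top–bottom difference. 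These partitions are distinct basis elements of $P_\nk$, so they are linearly independent.

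Hence $\dim \cell_\nk(\oo) = |\rmPe_{\nk,0}| = \sum_t |\rmPe_{\nk,0}\siz_t| = \sum_t \T(\nk/2,t)$ by the definition in (\ref{de:bell1}). Every block of an even partition has at least two elements, so $t \le \nk/2$; and for $\nk>0$ we have $t \ge 1$, which gives the stated range.

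The main obstacle is the bookkeeping in step (a): confirming that the spanning set is exactly $\{r\cup\{\underline{\nk}'\}\}$ with $r$ ranging over even partitions. In particular one must exclude top parts attached to the bottom block that have odd top intersection. This is handled by the parity argument above.
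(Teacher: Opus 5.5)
Your approach is the paper's: its one-line proof reads off that $P^2_nE_0$ has basis the 2-tonal partitions whose bottom row is exactly one part, as in (\ref{de:PnE0basis}), and counts these by number of top parts. Two steps in your write-up need repair before that count is justified.

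\textbf{Step (a) misdescribes the composite.} Because $E_0=\{\underline{n},\underline{n}'\}$ has no propagating part, no block of $pE_0$ can meet both $\underline{n}$ and $\underline{n}'$. The parts of $p$ touching its bottom row merge with the \emph{top} block of $E_0$ (the middle row), not with the bottom block.
\begin{itemize}
\item So $pE_0=\delta^{\epsilon}\,(r\cup\{\underline{n}'\})$ holds exactly. Here $r$ consists of the top-only parts of $p$ together with the union $M$ of the top portions of the propagating parts of $p$. We have $\epsilon=1$ exactly when $p$ has no propagating part, and then $M$ is absent.
\item No ``convention'' identifying a top part joined to the bottom with a free part is needed. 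Nor would one be legitimate, since distinct set partitions are distinct basis elements.
\item The parity fact you need is that $|M|=n-\sum(\text{sizes of top-only parts of }p)$ is even. This holds because $n$ is even and each top-only part of a 2-tonal $p$ has even size. Your parity count is really this computation, but attributed to the wrong block.
\end{itemize}

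\textbf{Step (b) fails at $\delta=0$.} You have $(r\cup\{\underline{n}'\})E_0=\delta\,(r\cup\{\underline{n}'\})$, which vanishes at $\delta=0$. Over $\mathbb{Z}[\delta]$ it only shows $\delta q\in P^2_nE_0$, not that the $q$'s form a basis of the integral form in which the Gram matrix is computed. The paper, however, takes $\mathcal{S}_n(\emptyset,\emptyset)=P^2_nE_0$ for all $\delta$ and uses it at $\delta=0$, for instance in the discussion of $\mathcal{S}_4(\emptyset,\emptyset)$ and in Prop.~\ref{simpS}.

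The fix is short. Choose a block $B$ of $r$ and set $p=(r\setminus\{B\})\cup\{B\cup\underline{n}'\}$. This $p$ is 2-tonal because $|B|$ and $n$ are both even, and $pE_0=r\cup\{\underline{n}'\}$ with no factor of $\delta$.

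With these two corrections, your count $\sum_{t=1}^{n/2}T^2(n/2,t)$ goes through as stated, for every $\delta$.
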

\begin{proof}
    This is the dimension of $P^2_n E_0$ by the construction of the algebra
    as in \S\ref{ss:constructPd}, 
    noting (\ref{de:bell1}).
\end{proof}


\newcommand{\Ress}[2]{\Res^{#1}_{#2}}  

\mdef \label{res:Tonal}
Let 
$\; \Ress{n}{n-1}: 
P_\nk^2(\delta)-\text{mod} \;\to\; P_{n-1}^2(\delta)-\text{mod} \;\;$ 
be the restriction functor associated with the inclusion $P_{n-1}^2(\delta)\hookrightarrow  P_\nk^2(\delta)$ given by 
$d\mapsto 1\otimes d$.
Then 
$\Ress{n}{n-1}(\cell_n(
\lambda, \mu))$ 
has filtration by standard modules 
{(see e.g. \cite[11.3]{amm2019tonal})}. 
Below is a schematic illustrating 
{filtration factors of 
$  \Ress{n}{n-1} ( \cell_n( \lambda, \mu))$}
in general position (see \cite[11.3 and 11.4]{amm2019tonal}):


{\small
\begin{equation}
     \label{eq:resschem}
\begin{tikzcd}
	{(\lambda+\square, \mu-\square) } && {(\lambda+\square,\mu) } && {(\lambda+\square,\mu+\square) } \\
	\\
	{(\lambda,\mu-\square) } && {(\lambda,\mu)} && {(\lambda,\mu+\square) } \\
	\\
	{(\lambda-\square,\mu-\square) } && {(\lambda-\square,\mu) } && {(\lambda-\square,\mu+\square) }
	\arrow[from=3-3, to=1-1]
	\arrow[from=3-3, to=1-3]
	\arrow[from=3-3, to=5-3]
	\arrow[from=3-3, to=5-5]
\end{tikzcd}
\end{equation}
}

And here is the same data expressed in a layout matching (\ref{eq:FR111}):

{\small
\[\begin{tikzcd}
	{n-1} && { (\lambda-\square,\mu) } & {(\lambda+\square, \mu-\square)} && {(\lambda-\square,\mu+\square)} & {(\lambda+\square,\mu) } \\
	\\
	n &&&& {(\lambda,\mu)}
	\arrow[from=3-5, to=1-3]
	\arrow[from=3-5, to=1-4]
	\arrow[from=3-5, to=1-6]
	\arrow[from=3-5, to=1-7]
\end{tikzcd}\]
}

\noindent
See Fig.\ref{fig:bratvtwo} for the realisation in low ranks, paralleling Fig.\ref{fig:PnBratelli}.


\medskip 

\begin{lem}\label{frecip}
For $\delta\in \C^\times$, the 2-tonal partition algebras 
obey

\noindent
a)  $P^2_{2\nk}(\delta)$ is semisimple for all $\nk$ if and only if
$\cell_{2\nk}({\emptyset},\emptyset)$ 
is simple for all $\nk$.

\noindent
b)  $P^2_{2\nk+1}(\delta)$ is semisimple for all $\nk$ if and only if
$\cell_{2\nk+1}({(1)},\emptyset)$ 
is simple for all $\nk$.
\end{lem}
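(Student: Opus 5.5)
The forward direction of each part is immediate. If $P^2_{2n}(\delta)$ is semisimple, every standard module $\cell_{2n}(\lambda,\mu)$ is semisimple. By (\ref{de:Pn2-spine}), $\cell_{2n}(\emptyset,\emptyset)=P^2_{2n}E_0$, and $E_0$ is a heredity preidempotent with $E_0P^2_{2n}E_0=\C E_0$ for $\delta\neq 0$. So this module is indecomposable projective with simple head, and semisimplicity forces it to be simple. The odd case is identical, using $\cell_{2n+1}((1),\emptyset)=P^2_{2n+1}E_1$ and the fact that $E_1$ is primitive modulo the ideal generated by $E_0$. So the work is the converse. I will follow the proof of (\ref{pa:indicator}) from \cite{MartinSaleur94b}, now for the tower $P^2_{n-1}\hookrightarrow P^2_n$ via $d\mapsto 1\otimes d$, treating the two parities together.

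\textbf{Step 1: reduction to simplicity of all standard modules.} Fix $\delta\neq0$. Using the localisation functors $G,F$ for $W^2$ or $W^2_b$ from (\ref{pa:globloc2}), the semisimplicity of the quotients $A^2_n$, and the upper-triangularity from (\ref{pa:P2head}), I argue as in (II) of (\ref{pa:indicator}). If $P^2_n$ is not semisimple, there is a nonzero non-iso map $\SSS_n(\lambda',\mu')\to\SSS_n(\lambda,\mu)$ with $|(\lambda',\mu')|>|(\lambda,\mu)|$, and the rank gap is even since ranks agree with $n$ mod 2. Hence it suffices to show that every $\cell_n(\lambda,\mu)$ is simple. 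I do this by induction on the rank $r=|(\lambda,\mu)|$, simultaneously for all $n\equiv r \pmod 2$. The base cases $r=0$ (even $n$) and $r=1$ (odd $n$) are exactly the hypotheses of a) and b).

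\textbf{Step 2: inductive step via Frobenius reciprocity.} Suppose all standard modules of rank $\le r$ are simple for every $n$. Suppose $\SSS_n(\nu)$ has rank $r+1$ (or $r+2$, depending on the parity bookkeeping) and is not simple, so some $\SSS_n(\kappa)$ of strictly larger rank maps nontrivially into it. The restriction rule (\ref{eq:resschem}) changes rank by exactly $\pm1$: it adds or removes a box in $\lambda$, or moves a box between $\lambda$ and $\mu$. So I can choose $\rho$ of rank $r$ (hence simple by induction) such that $\Res\,\SSS_{n+1}(\rho)$ contains $\SSS_n(\nu)$ as a filtration factor. For instance, take $\rho=(\nu_1-\square,\nu_2)$ if $\nu_1\neq\emptyset$, and $\rho=(\nu_1+\square,\nu_2-\square)$ otherwise; the latter has rank one less. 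The other factors have rank $r-1$, which lies in blocks separated by induction, or rank $r+1$, which cannot extend with $\SSS_n(\nu)$ since same-rank modules come from the semisimple $A^2$ layer. So $\SSS_n(\nu)$ splits off in the relevant block, giving $\Hom(\SSS_n(\kappa),\Res\,\SSS_{n+1}(\rho))\neq0$. By reciprocity (\ref{eq:FR}) this gives $\Hom(\Ind\,\SSS_n(\kappa),\SSS_{n+1}(\rho))\neq0$. But $\Ind\,\SSS_n(\kappa)$ is filtered by standards of rank at least $|\kappa|-1>r$, and none of these maps nontrivially to the simple module $\SSS_{n+1}(\rho)$ with a distinct label, by upper-triangularity and Schur's Lemma. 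This is a contradiction.

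\textbf{Main obstacle.} The delicate part is the bookkeeping in Step 2. I need to justify that $\Ind$ has a standard filtration with the dual rule to (\ref{eq:resschem}); I would obtain it from $G_{W^2_b}$ and \cite[11.3]{amm2019tonal}. I also need to guarantee that $\SSS_n(\nu)$ genuinely splits off from $\Res\,\SSS_{n+1}(\rho)$ despite same-rank neighbours such as $(\lambda+\square,\mu-\square)$. For the latter, I would use that standards of equal rank have heads indexed by distinct simples of $A^2_n$, so there are no extensions or maps between them. The small cases $n\le2$, where $W^2_b$ is unavailable, would be checked directly from Fig.~\ref{fig:bratvtwo}.
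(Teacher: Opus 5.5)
\textbf{There is a genuine gap in the parity bookkeeping of your Step 2.}

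You work with the one-step inclusion $P^2_n\hookrightarrow P^2_{n+1}$. Its restriction rule \eqref{eq:resschem} changes rank by $\pm1$, so it alternates the parity of the level. Your induction on $r$ therefore moves back and forth between even and odd levels, and you take both $r=0$ and $r=1$ as base cases. That is, you assume the hypotheses of a) and b) at once. This proves only the combined statement: if all $\cell_{2n}(\emptyset,\emptyset)$ \emph{and} all $\cell_{2n+1}((1),\emptyset)$ are simple, then every $P^2_n(\delta)$ is semisimple. The lemma instead asserts a) and b) separately, each from its own hypothesis.

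For b) your scheme actually fails. The only rank-$1$ label is $((1),\emptyset)$, so to reach rank $2$ you must restrict $\cell_{n+1}((1),\emptyset)$. That restriction contains the rank-$0$ factor $\cell_n(\emptyset,\emptyset)$. Your splitting argument needs this factor to be simple and in its own block, and hypothesis b) says nothing about even levels. For a) you could avoid b): $\Res\,\cell_{n+1}(\emptyset,\emptyset)\cong\cell_n((1),\emptyset)$ has no lower neighbours, so your step from $r=0$ to $r=1$ uses nothing beyond a). As written, though, you import b). Your hesitation ``rank $r+1$ (or $r+2$, depending on the parity bookkeeping)'' is exactly this unresolved point.

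\textbf{How the paper avoids this.} It uses the iterated inclusion $P^2_{n-2}\hookrightarrow P^2_{n-1}\hookrightarrow P^2_n$, which preserves index parity. The double restriction of a rank-$r$ standard has factors only of ranks $r-2$, $r$, $r+2$. One then inducts on rank in steps of $2$, using the two-step induction $\Ind_n^{n+2}$ and reciprocity with $\Res^{n+2}_n$, with the single hypothesis of a) (resp.\ b)) as the base. The rest of your Step 2 transfers verbatim to that setting: the choice of $\rho$, the block argument letting $\cell_n(\nu)$ be taken as the first filtration factor, and upper-triangularity plus Schur's Lemma on the induced module. Alternatively, for $\delta\neq0$ you could use \eqref{odd-even} to show that the hypotheses of a) and b) are equivalent, after which your combined statement suffices.

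\textbf{A minor slip in your ``only if'' direction.} For odd $n$ the element $E_0$ is not $2$-tonal, so ``primitive modulo the ideal generated by $E_0$'' makes no sense there. In fact $E_1P^2_nE_1=\C E_1$ outright for odd $n$, because every $2$-tonal diagram with an odd number of vertices per row has a propagating part. So your conclusion still stands.
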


\begin{proof} 
{ The proof in each case is a direct analogue of the one  in (\ref{pa:indicator}) for the ordinary case.
In particular consider the `if' direction.

\noindent 
Step (I): 
We
note from {\ref{pa:Trep} }
(coming in turn from \cite{amm2019tonal}, for example)
that the 
simple 
index set for $P_n^2$ is much larger and more complex
than for $P_n$,
but again has a rank $|-|$ correlated to the `layer' in the idempotent reciprocity property
from (\ref{pa:globloc2}). Paralleling Fig.\ref{fig:PnBratelli}  
let us first take a look at the corresponding figure for $P^2_n$ --- 
\ppmm{Fig.\ref{fig:bratvtwo}.}
Observe that the index ranks are even for even $n$ and odd otherwise.
\ppmm{
Observe that the inclusion 
$P^2_{n-2} \hookrightarrow P^2_n$
given by the iterated inclusion 
$P^2_{n-2} \hookrightarrow P^2_{n-1} \hookrightarrow P^2_n \; $}
\ppmm{formally 
preserves index parity under restriction. 
Thus the iterated restriction obeys a parity-preserving form of 
(\ref{eq:Indres}).} 

\ignore{{ 
\begin{figure}[!htbp]
\[
\includegraphics[width=5.5in]{bravtwo.eps}
\]
\caption{Enhanced Bratelli diagram for $P^2_n(\delta)$ up to $n=4$ (cf. Fig.\ref{fig:PnBratelli}).
Observe that the index ranks are even for even $n$ and odd otherwise.
At each level the 
coloured 
arrows represent the maps existing between the standard modules for the specified $\delta$. For example, {on level $\nk=4$ when $\delta=1$ there is a map from 
$(\emptyset, (2))$ to ($\emptyset$,$\emptyset$), indicated by a green arrow, with image of dimension 3.}
\label{fig:bratvtwo}}
\end{figure} 
}}
%

\newcommand{\boxx}{\square}

\noindent
Step (II) then lifts directly
from (\ref{pa:indicator})(II). 
So for $\delta_c \in \C^\times$ it is enough to prove all $\SSS_n(\lambda,\mu)$ are simple. 

\noindent 
Step (III):
Firstly note that 
we have the restriction rules from \ref{res:Tonal}
(coming in turn from \cite[\S11.1]{amm2019tonal}), 
so that the formulation as in 
(\ref{pa:indicator})(III) lifts essentially directly
\ppmm{to each case, 
(a) and (b), 
by considering $P^2_{n-2} \hookrightarrow P^2_n$ and restricting to fixed index parity.}

\ppmm{ 
For example, $\SSS_n(\emptyset,\boxx) \;$ - as in Figure~\ref{fig:bratvtwo} - 
with rank $| \emptyset,\boxx | = 2$,
restricts firstly (along $P^2_{n-1} \hookrightarrow P^2_n$) 
to a module with 
factors $\SSS_{n-1}(\boxx,\emptyset)$ 
(rank $| \boxx,\emptyset  | = 1$) and 
$\SSS_{n-1}(\boxx,\boxx)$
(rank $| \boxx,\boxx  | = 3$).
These factors then restrict again, 
along   $P^2_{n-2} \hookrightarrow P^2_{n-1}$
- the first gives factors 
$\SSS_{n-2}(\emptyset,\emptyset)$
(rank 0),
$\SSS_{n-2}(\emptyset,\boxx)$,
$\SSS_{n-2}(\boxx\!\boxx , \emptyset)$,
and 
$\SSS_{n-2}( 
{{\scalebox{.55}{\rb{-.07in}{$\young(\,,\,)$}}}}
, \emptyset)$
(all rank 2).
Then restricting  $\SSS_{n-1}(\boxx,\boxx)$ we get 
modules with ranks 2 and 4. 
Altogether we have ranks 0,2,4.
(Here we do not track details of the possible order of filtration factors. We will address this shortly.)
And in general starting with rank $r$ we will get 
at most ranks $r-2,r,r+2$. 
 }

\ppmm{Note so far that the restriction part of the schematic in (\ref{eq:FR111})
can be used again here, except that the layers are now 2 apart; and the rank partition looks only at ranks of a corresponding fixed parity.}

\newcommand{\Indd}[2]{\Ind^{#1}_{#2}}

\ppmm{For the induction rules, in the ordinary 
$P_{n-1} \hookrightarrow P_n$ 
case we have,
expressed informally:
$ Ind_{n-1}^{n} - \cong Res^{n+1}_{n}(G_{n}^{n+1} G_{n-1}^{n} -)$ 
(and $G_{n-1}^{n} \SSS_{n-1}(\lambda) = \SSS_n(\lambda)$),
where $G = G_\A$ is as in \ref{de:GFfunctors} and \ref{de:GA}. 
This has a parallel for 
$P^2_{n-1} \hookrightarrow P^2_{n}$ 
as in 
\cite[(11.7)]{amm2019tonal},
which we denote 
$\; \Indd{n}{n-1} : P^2_{n-1}-\!\!\!\!\mod \;\rightarrow\; P^2_{n}-\!\!\!\!\mod$
(using $G_{W^2}$ once instead of $G_\A$ twice). }
Applying this to some $\SSS_{n-1}(\lambda,\mu)$ we thus get a $P^2_n$-module with an $\SSS$-filtration. 
Next applying $\Indd{n+1}{n}-$, we first apply $G-$ to the filtered module (then finally $\Ress{n+2}{n+1} -$). The functor $G-$ is right exact but not exact. 
However,
since it is well defined in all cases including semisimple, 
it behaves `as expected' on characters, 
and this will be sufficient for our purposes. 
\ppmx{ 
[So it remains to derive the 
corresponding property in our double-step case.]
[DONE now?]}

\medskip 

\noindent 
Step (IV):
The base of induction here is,
in each case,
the if-hypothesis in the Lemma. 
Otherwise the argument lifts essentially directly, using (\ref{pa:globloc2}) instead of (\ref{pa:globloc101}).
\ppmm{
The inductive assumption is that all modules $\SSS_n(\mu,\mu')$ with ranks $|(\mu,\mu')|$ up to $l$ are simple. 
In particular the double-scale interpretation of the schematic 
(\ref{eq:FR111}) 
sets up the argument for why 
$\; \Hom(\Ind_n^{n+2}(\SSS_{n} (\lambda,\lambda')) , 
\; \SSS_{n+2}(\mu,\mu')) = 0 \;$
--- here we write $\Ind_{n}^{n+2} \!-\; $ 
for the two-step induction functor;
we write 
$(\lambda,\lambda')$ for the double-index version of $\lambda$ in (\ref{eq:FR111}), hence with rank $l+4$ or greater; 
and $(\mu,\mu')$ for the double-index version of $\mu$. 
The argument is as follows. 
Firstly   $ \SSS_{n+2}(\mu,\mu') $ is simple by assumption,
so it is enough to show that this simple does not occur in the induction. Here the schematic indicates that none of the $\SSS$-modules in the induction have this label, or any lower label in the rank order
--- the lowest possible rank is $l+2$. 
But then the upper-triangular simple decomposition property 
(see (\ref{pa:P2head}) and e.g. \cite[9.4]{amm2019tonal}) 
shows that the simple  $ \SSS_{n+2}(\mu,\mu') $ does not appear
as a simple factor,
and we are done by Schur's Lemma. }

\ppmm{ 
Next, continuing to parallel (\ref{pa:indicator})(IV), we 
suppose for a contradiction that some $\SSS_n(\nu,\nu')$ of rank $l+2$ (the next larger rank after rank $l$ here) is not simple,
and hence has a map in, necessarily from some 
$\SSS_{n}(\lambda,\lambda')$ of larger rank (by the upper-triangular property (\ref{pa:P2head})) as above. 
Observe from the restriction rules that any such   $\SSS_n(\nu,\nu')$
lies in the double-restriction of some $ \SSS_{n+2}(\mu,\mu') $
as above.
The other factors in 
the corresponding 
$\Res^{n+2}_{n} (\SSS_{n+2}(\mu,\mu')) $ are either in their own singleton block (since simple by assumption and not mapping into any lower rank module, since these are also all simple by assumption), 
or also of rank $l+2$, and hence neither extending nor extended by  $\SSS_n(\nu,\nu')$.}
Thus 
the factor $\SSS_n(\nu,\nu') $  in 
$\Res^{n+2}_{n} (\SSS_{n+2}(\mu,\mu')) $
can be made the first such filtration factor, so that 
$\Hom(\SSS_n(\nu,\nu'), \Res^{n+2}_{n} (\SSS_{n+2}(\mu,\mu'))) 
\neq 0$. 
This contradicts 
our Frobenius reciprocity identity. 
We must conclude that all modules of rank $l+2$ are simple. This completes the inductive step. 
}

The only-if direction is elementary, since the spine module $P^2_n E_{{0/1}}$ 
(case (a) or (b) respectively)
is indecomposable.
\end{proof}

\ignore{{ 
In \cite{amm2019tonal} it is shown that the tower of 2-tonal partitions algebras 
$0\subset P^2_1(\delta)\subset P^2_2(\delta)\subset P^2_3(\delta)\subset P^2_4(\delta)\subset\cdots$ 
satisfy the six {tower} axioms of \cite{cox2006representation},
\ppm{with 
$\cell_\nk(\ca{\emptyset},\emptyset)$ 
$[CORRECT??]$ 
in the role of the signifier module}. 
\ppm{[-but I am confused here, because \cite{cox2006representation} does not seem to talk about the signifier module!]}
\\
 Then by \cite[Theorem 1.1.]{cox2006representation} $P^2_\nk(\delta)$ is semisimple for all $\nk$ if and only if
$\cell_\nk(\emptyset/(1),\emptyset)$  is simple for all $\nk$.
\ppm{(as defined in (\ref{}))}
is simple for all $\nk$.
\ppm{[-I don't understand this yet!  Martin-Saleur proved the partition algebra case a long time before the paper \cite{cox2006representation} existed. And I think we need to explain the connection to Th.1.1 in \cite{cox2006representation} if we are going to claim to use it here. It is not obvious to me from what we write so far.]}
}}
\ppm{
}
\\


\ignore{{ 
{\it{Remark}}:   \ppm{[DELETE this now?]}
\ppm{In particular, satisfaction of}
The quasi-heredity axiom shows 
\ppm{[implies?]}
that the morphisms 
\ppm{[-what morphisms?]}
are only in one direction 
 determined by the quasi-heredity order on the indexing set of the simple modules. The Frobenius reciprocity and restriction rules axioms show that if there are any morphisms between two $P^2_n(\delta)$-modules labeled by $(\lambda,\mu)$ and $(\lambda',\mu')$ then the morphisms must appear in lower levels if  $(\lambda,\mu),(\lambda',\mu')\in\Lambda(P^2_{n-2}(\delta))$, see Figure~\ref{brat}. 
%

}}
\ignore{{

\subsection{On the hyperoctahedral group} 
\ppm{[DELETE this section now?]}

Let $\Z_2 \wr
\Sym_\kn$ be the wreath product of the $\Z_2$ with $\Sym_\kn$. The group $\Z_2 \wr
\Sym_\kn$ is sometimes known as the hyperoctahedral group. 
It can be identified with 
{the group of}
$\kn\times \kn$ signed permutation matrices. 

\mdef \label{pr:repZwrS}
The ordinary irreducible representations of $\Z_2 \wr
\Sym_\kn$ may be parameterised by the set $$
\Lambda(\Z_2 \wr
\Sym_\kn)=\{(\lambda, \mu)\mid \lambda\vdash s,\, \mu\vdash t \text{ and } s+t=\kn\}.
$$
\ppm{We do not give any details here, since none are needed, except that...}
The trivial representation is labeled by the pair $\left((\kn),\emptyset\right).$
For more on the representation theory of $\Z_2 \wr
\Sym_\kn$, see for example \cite{al1981representations, morris1981representations,  sniady2006gaussian, banjo}.

}}
\ignore{{

\subsection{Dimension of the head of the $P^2_n$ `spine' module
$\spine = P^2_n E_0$ for $\delta\in\N$ }   $\;$ 

\ppm{[{\bf DELETE} everything in this section except the Lemma at the end? That Lemma is what is needed, and it is not proved by anything here! (It is proved in \S\ref{ss:review}/\ref{ss:tonal-prep}.) Maybe somehow keep the Benkhart--Moon refs - even though we perhaps do not really use them anymore!]}

\newcommand{\V}{\mathsf{V}}
\newcommand{\G}{\mathsf{G}}   
\newcommand{\head}{{\mathsf{head}}}

\ppm{[it is maybe a bit hard to follow what is going on here. how about starting by explaining in terms of what happens in \cite{MartinSaleur94b}??]}
In this section, we follow \cite{benkart2016schur, benkart2017walks}  in recalling the Schur-Weyl duality between the 
\ppm{complex} 
representation theories of 
\ppm{a finite group}
$\G$ and 
\ppm{the centraliser algebras of its actions on $n$-th tensor powers of a fixed $\C \mathsf{G}$-module $V$,
denoted}
$\mathsf{Z}_\nk(\mathsf{G})$. 
This approach will allow us to use equation 28 of \cite{benkart2017walks} to determine the dimension of the head of the $P^2_\nk(\kn)$-module 
$\spine$.

\mdef 
Let $\G$ be finite group and $\mathsf{V}$ be a $\C \mathsf{G} $-module. 
Thus the action of $\C \G$ 
on $\V$ is given by a subalgebra of $\End(\V)$.
The centraliser algebra of the action of $\mathsf{G}$ on $\mathsf{V}^{\otimes \nk}$ is defined to be 
\ppm{[-are you defining, or introducing notation? I think we can define centraliser algebra for any action on any space. Then here maybe you could say the Z notation in these terms.]}\ca{[The reason why I did it in this way is I wanted to stay as close as possible to the notations and results in \cite{benkart2016schur, benkart2017walks}]}
\ppm{[-sure. But two wrongs dont make a right. Let us at least explain? It is very confusing to imply there is something special about this case when there is not.]}
\[
\mathsf{Z}_\nk(\mathsf{G})
\;=\;
\{z\in \mathsf{End}(\mathsf{V}^{\otimes \nk})\mid z(g.w)
=g.z(w)\text{ for all }g\in \mathsf{G}, w\in \mathsf{V}^{\otimes \nk} \}.
\]
\ppm{[-this depends on $\V$ but the notation suggests not. maybe add a $\V$? Or say that $\V$ is fixed?]}

\mdef  \label{de:Zlam}
Let $\Lambda(\G)$ 
be an indexing set for the simple $\mathsf{G}$-modules over $\C$.
Then let    $\Lambda_\nk(\mathsf{G})\subseteq\Lambda(\mathsf{G})$ 
be the indexing set of simple $\mathsf{G}$-modules that appear with a non-zero multiplicity in $\mathsf{V}^{\otimes \nk}$. 
The centraliser algebra $\mathsf{Z}_\nk(\mathsf{G})$ is a semisimple $\C$-algebra. 
 By the Schur-Weyl duality,
\begin{enumerate}
    \item[i)] there is a bijection between $\Lambda_\nk(\mathsf{G})$ and an indexing set of
    \ppm{simple}
 $\mathsf{Z}_\nk(\mathsf{G})$-modules. 
    Let $\{\mathsf{Z}^\lambda_\nk\mid \lambda\in\Lambda_\nk(\mathsf{G})\}$ 
    be a 
    corresponding 
    complete set of simple $\mathsf{Z}_\nk(\mathsf{G})$-modules up to isomorphism.
    \ppm{[-can you find a more mathematical way of saying this? mention Schur's Lemma maybe?]}
    \item[ii)]  for each $\lambda\in\Lambda_\nk(\mathsf{G})$ we have \[\dim\mathsf{Z}^\lambda_\nk=\text{multiplicity of the corresponding simple }\mathsf{G}\text{-module } \mathsf{G}_\lambda\text{ in }\mathsf{V}^{\otimes \nk}. \]
\end{enumerate}

\mdef 
In particular, if $\mathsf{G}=\Z_2 \wr
\Sym_\kn$, then $\mathsf{G}$ acts on $\mathsf{V}=\C^\kn$ as $\kn\times \kn$ signed permutation matrices relative to the standard basis of $\mathsf{V}$.
\\
For $\kn\geq 2\nk$, Tanabe in \cite{tanabe1997centralizer} proved the following isomorphism of $\C$-algebras  
\[
\mathsf{Z}_\nk(\Z_2 \wr
\Sym_\kn)\simeq P^2_\nk(\kn) 
\hspace{2cm}
{(\kn \geq 2\nk)}
\] 
(but of course this is false in general). 
For each positive integer $\kn$, the restriction of the ``Potts" representation of the partition algebra $P_\nk(\kn)$ on $\mathsf{V}^{\otimes \nk}$ ({see} \cite{martin2000partition}) gives,
\[
P^2_\nk(\kn)\twoheadrightarrow \mathsf{Z}_\nk(\Z_2 \wr
\Sym_\kn).
\]

\mdef 
Assume $\nk$ is even. The space of 
$\Z_2 \wr\Sym_\kn$-invariants 
in $\mathsf{V}^{\otimes \nk}$ is denoted by $\left(\mathsf{V}^{\otimes \nk}\right)^{\Z_2 \wr\Sym_\kn}$. 
Obviously this is the isotypic subspace where $\Z_2 \wr\Sym_\kn$ acts as on copies of the trivial module. 
Thus its dimension is the 
multiplicity of the trivial module, and hence the 
dimension of the corresponding module for the centraliser. That is 
\[
\dim\mathsf{Z}_{\nk}^{((\kn),\emptyset)}
=\dim ( (\mathsf{V}^{\otimes \nk})^{\Z_2 \wr\Sym_\kn})
\]

\mdef 
\ppm{PROPOSITION.}
Assume $\nk$ is even. 
Consider the index
$(\emptyset,\emptyset)\in\Lambda(P^2_\nk(\kn))$. 
We have 
\ppm{[better to put $\SSS_n(...)$ below 
- also better to have $\delta=k$ in notation?
- where is `head' introduced??
- change macro!? -]}
$$
\head(\spine)
=  \mathsf{Z}_{\nk}^{((\kn),\emptyset)}
$$ 
\ppm{[surely this is $\cong$ not $=$?]}
(recall the notation from (\ref{pr:repZwrS}))  
\ppm{[do we also need (\ref{de:Zlam})(i)? in that case I think the ``a bijection'' there needs to be specified?!]}
\ppm{[say something about the PROOF???]}

\mdef 
Furthermore
\[
\dim\mathsf{Z}_{\nk}^{((\kn),\emptyset)}
=\dim (\mathsf{V}^{\otimes \nk})^{\Z_2 \wr\Sym_n}
=\dim\mathsf{Z}_{\frac{\nk}{2}}(\Z_2 \wr\Sym_\kn))
=\sum_{t=1}^{\kn} T(\frac{\nk}{2}, t)
\]
\ppm{[-hard to believe it does not depend on $n$!!!]}\ca{[you are right, fixed it! this happened when I switched the k's and n']}
The second and third equality  follows from equation 28 of \cite{benkart2017walks}


Hence we have the following Lemma
\begin{lem}\label{dimL}
Let $\delta =\kn \ge 1$ then
$$\dim head(\cell_\nk(\emptyset, \emptyset)) 
= 
\sum_{t=1}^{\kn} T(\frac{\nk}{2}, t)
$$\qed
\end{lem}
NB: If $\delta =0$ then 
\ppm{formally??}
$\dim L(\emptyset, \emptyset) =0$. 
($P^2_\kn(\delta)$ is
cellular but not quasi-hereditary in this case. $L$ is defined to be
the cell module over its radical.
\ppm{[-this obviously does not work as written. but easy to fix.]})

}}

\subsection{Gram matrices for $P^2_n$  and the main Theorem} $\;$ 
\label{ss:avengers-end-game}




\ignore{{
\noindent 
\begin{figure}
\[
\includegraphics[width=5.5in]{bratf.eps}
\]
\caption{{Enhanced Bratelli diagram for $P^2_\nk(\delta)$ up to $\nk=4$. At each level the arrows represent the maps existing  between the standard modules for a specified $\delta$. For example, on level $\nk=4$ when $\delta=1$ there is a map from ($\emptyset$, (2)) to ($\emptyset$,$\emptyset$) with image of dimension $3$.\ca{[we do not need this fig anymore]} \label{brat}}}
\end{figure} 
}}

\mdef  
We will only need the $e_{(0,0)} = E_0$ case here, but 
there is a contravariant form 
$\langle-,-
\rangle_{e_{(\lambda,\mu)}}$ 
on each standard $P^2_{\nk}(\delta)$-module $\mathcal{S}_\nk(\lambda,\mu) \;$ 
(see Lemma 9.1 of \cite{amm2019tonal}), so that the conditions of (\ref{pa:gram00}) hold.
\ignore{{
defined by 
\begin{equation}\label{form}
    (xa^{\vec{m}} e_{(\lambda,\mu)})^{op}(ya^{\vec{m}} e_{(\lambda,\mu)})=\langle 
x,y\rangle_{e_{(\lambda,\mu)}}a^{\vec{m}} e_{(\lambda,\mu)}.
\end{equation} 
Here   ${(-)}^{op}$ is an 
}}
The
algebra antiautomorphism of $P^2_\nk(\delta)$ is again defined on the diagram basis by $p\mapsto p^*$ where $p^*$ is the diagram obtained from $p$ by a horizontal ``flip", as in (\ref{de:flip}). 

\mdef 
For example, consider $\nk=4$. 
Then Figure~\ref{gram} shows the Gram matrix of 
$\langle-,-\rangle_{e_{(\emptyset,\emptyset)}}$ 
on $\spine = P_n^2 \E_0$ 
- with respect to the basis and order shown in the figure. 
\begin{figure}
\includegraphics[width=2.5in]{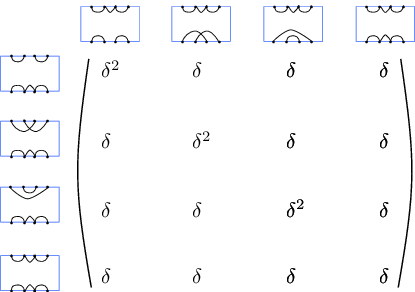}
\hspace{1.5cm}
\includegraphics[width=2.5in]{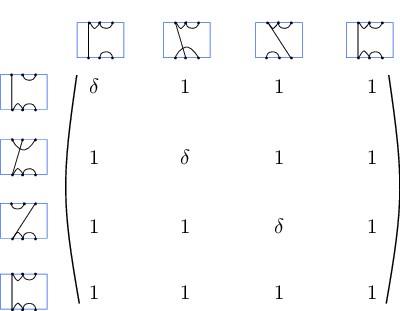}
\caption{{The first matrix is the Gram matrix of $\langle-,-
\rangle_{e_{(\emptyset,\emptyset)}}$ when $n=4$,
with respect to the indicated basis;  $\;$ 
and the second is the gram matrix of $\langle-,-
\rangle_{e_{((1),\emptyset)}}$ when $n=3$.} \label{gram}}
\end{figure}
It has determinant $\delta^4(\delta-1)^3$. 
Compare this with Fig.\ref{fig:bratvtwo}.
%
%
When $\delta=1$ 
we see from Fig.\ref{fig:bratvtwo} (the analogue here of Fig.\ref{fig:PnBratelli}) that 
there is a homomorphism 
from $\cell_4(0,(2))$         
on to the radical $rad(\cell_4(\emptyset,\emptyset))$ and 
$\dim rad(\cell_4(\emptyset,\emptyset))=3.$
$\;$ 
When $\delta=0$, 
there is a {\it surjective} map from $\cell_4((2),0)$ to $\cell_4(\emptyset,\emptyset)$,
so $\cell_4(\emptyset,\emptyset)$ is simple. 
But  $\E_0$ is nilpotent so $P^2_n \E_0$  is not the quotient of a projective,
just as in the $P_n(0)$ case.

\mdef 
For $\lambda,\mu$ a suitable pair of partitions let 
$\Gram^2_n(\lambda,\mu) = \Gram(\cell_\nk(\lambda,\mu))$ be the Gram matrix of 
$\langle -, -\rangle_{e_{(\lambda,\mu)}}$ on $\cell_\nk(\lambda,\mu)$ with respect to a given ordered basis.

We restrict  
computational 
attention to the Gram determinant of  $\cell_\nk(\emptyset,\emptyset)$. 
However observe that  
\begin{equation}\label{odd-even}
\delta^{\dim(\cell_{\nk+1}(\emptyset,\emptyset))}    
 \;   \Gram(\cell_\nk((1),\emptyset))
    \;   = \;  \Gram(\cell_{\nk+1}(\emptyset,\emptyset))
\hspace{1.6cm} \mbox{($n$ odd)}
\end{equation}
 cf. (\ref{eg:gramSn0}).
An example of the above equality is given in Figure~\ref{gram}.

\medskip


Next we parallel the Gram determinant polynomial degree identity  
for the $P_n$ case
from (\ref{pa:gram-det-order}). 

\begin{lem} \label{lem:degg}
The polynomial degree  
of the Gram determinant of $\cell_\nk(\emptyset,\emptyset)$ 
{in $P^2_\nk$} ($n$ even)
is 
$$
\deg( \det( \Gamma^2_n(\emptyset,\emptyset))) \; = \; 
\sum_{t=1}^{\nk/2} t \T(\frac{\nk}{2}, t)
$$
\end{lem}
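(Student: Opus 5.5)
The plan is to copy the argument of (\ref{pa:gram-det-order}) for the ordinary case. First identify the Gram matrix entries explicitly. By (\ref{de:Pn2-spine}) and Lemma~\ref{lem:dimS}, $\spine = P^2_n E_0$ has basis $\{ p \otimes \w{n*} \}$, with $p$ running over $\rmPe_{n,0}$, i.e.\ even set partitions of the top row $\underline{n}$. For two such basis elements $a = p\otimes \w{n*}$ and $b=q\otimes\w{n*}$, the form (\ref{form}) is computed by stacking $a^*$ on $b$. The middle row of $n$ vertices then carries the partitions $p$ and $q$. The top and bottom rows are single blocks, and they lie in $E_0$, so they do not touch the middle. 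Every connected component of the middle row is therefore closed, and
\[
\form{a,b}_{E_0} \;=\; \delta^{\,|p\vee q|},
\]
where $p\vee q$ is the join (finest common coarsening) of $p$ and $q$ in the lattice $\rmP_{\underline n}$. This agrees with the $P_n$ matrices in Fig.~\ref{fig:gram234}, restricted to even partitions. So $\Gram^2_n(\emptyset,\emptyset)$ is the principal submatrix of $\Gram_n(\emptyset)$ indexed by $\rmPe_{n,0}$.

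Next, the degree. The determinant is a signed sum over permutations $\pi$ of $\rmPe_{n,0}$ of $\prod_p \delta^{|p\vee \pi(p)|}$. Since $|p\vee q| \le \min(|p|,|q|)$, each term has degree at most
\[
\sum_p \min(|p|,|\pi(p)|) \;\le\; \sum_p |p|,
\]
with equality iff $|p\vee\pi(p)|=|p|=|\pi(p)|$ for all $p$. That condition forces $p\vee\pi(p)=p=\pi(p)$. So only $\pi=\mathrm{id}$ attains the maximum, and the leading term is $\delta^{\sum_p |p|}$ with coefficient $1$, which cannot cancel. Grouping the even partitions by their number of parts $t$ via the definition of $\T$ in (\ref{de:bell1}) gives
\[
\sum_{p\in\rmPe_{n,0}} |p| \;=\; \sum_{t=1}^{n/2} t\,\T(\tfrac n2,t),
\]
which is the claim.

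The only real obstacle is the first step: checking carefully that the composition in $\Pcat^2$ gives exactly $\delta^{|p\vee q|}$. In particular one must check that no component of the middle row escapes to the outer rows. The uniqueness of the maximal term is then elementary.
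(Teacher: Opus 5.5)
Your proof is correct and follows the paper's route: identify each Gram entry as the monomial $\delta^{|p\vee q|}$, show that the product of the diagonal entries carries the top degree, and count the diagonal degrees by number of parts. Your extra step, that equality $|p\vee\pi(p)|=|p|=|\pi(p)|$ for all $p$ forces $\pi=\mathrm{id}$, is a worthwhile sharpening. The paper only notes that no diagonal degree is strictly dominated within its row or column, and that by itself does not rule out cancellation among tied top-degree terms; your argument shows the leading coefficient is exactly $1$.
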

\begin{proof}
Fix an order on the diagram basis of $\cell_\nk(\emptyset,\emptyset)$.  
The entries of the Gram matrix are all monomials in $\delta$. 
Let $x$ and $y$ be two diagram basis 
\ppmm{elements}
of $\spine$. Then  
$\langle x, y\rangle_{e_{(\emptyset,\emptyset)}}
 = \delta^{\text{ number of parts of }x\setminus \{\underline{n}'\}} $ 
 when $x=y$, and by the pigeonhole principle 
$\;$ degree of $\langle x, x\rangle_{e_{(\emptyset,\emptyset)}}\geq$ degree of $ \langle x, y\rangle_{e_{(\emptyset,\emptyset)}}$ 
when $x\neq y$. 
That is, in each row/column the degree of the diagonal entry is never strictly dominated by the degree of the other entries. 
Hence the degree of 
 the determinant of Gram matrix 
 $\Gram(\spine)$   
 equals the 
 degree of the product of the diagonal entries of 
 $\Gram(\spine).$ 
 There are $\T(\frac{\nk}{2}, t)$ basis 
 {elements}
 $x$ such that the number of parts of $x\setminus \{\underline{\nk}'\}$ is $t$. 
 Hence the degree  
 is 
$
\sum_{t=1}^{\nk/2} t \T(\frac{\nk}{2}, t).
$
\end{proof}


Following (\ref{eq:gram1}) 
and (\ref{eq:gramfactor2})
define 
exponent 
$\beta_{\delta_c}$ for $\delta_c \in \C$ 
(and fixed $n$)
by setting
\beq  \label{eq:gram21} 
\det(\Gram^2_n(\oo)) = \prod_{\delta_c} (\delta-\delta_c)^{\beta_{\delta_c}} 
\eq 

\begin{lem}\label{gramfact}
For $k \in \Fk$
let 
$ \; L_\nk^{\delta=\kn}(\emptyset,\emptyset)
\; = \; 
head \;\cell_\nk(\emptyset,\emptyset) \;$
{in $P^2_\nk(\kn)$}. 
\begin{enumerate}
 \item[(I)] The identities in Lem.\ref{dimL} and Lem.\ref{lem:dimS} yield:
$$
\dim \cell_\nk(\emptyset,\emptyset)  + 
\sum_{\delta=\kn \in \N} 
\left( \dim \cell_\nk(\emptyset,\emptyset) 
- \dim
L_\nk^{\delta=\kn}(\emptyset,\emptyset)\right) 
\;\;\;  =  \;\;\;    {\sum_{t=1}^{\nk/2}}t \T(\frac{\nk}{2}, t).
$$
\item[(II)]
The Gram determinant of $\cell_\nk(\emptyset,\emptyset)$ 
{in $P^2_\nk$} ($n$ even)
factorises as 
\beq \label{eq:detfact2}
\det
(\Gram(\cell_\nk(\emptyset,\emptyset) ))
\; = \; 
\delta^{\dim \cell_\nk(\emptyset,\emptyset)}
\prod_{k=1}^{\nk/2-1}
(\delta-\kn)^{
\dim \cell_\nk(\emptyset,\emptyset) - \dim L_\nk^{\delta=\kn}(\emptyset,\emptyset)
}
\eq 
\end{enumerate}
\end{lem}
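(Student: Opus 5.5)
The plan is to prove part (I) by a purely combinatorial rearrangement of the two dimension formulas. Part (II) then follows by the same saturation argument as in (\ref{pa:simpleS0}) for $P_n$: lower bounds on each exponent $\beta_{\delta_c}$ from representation theory, summing exactly to the degree computed in Lemma~\ref{lem:degg}.

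For (I), write $m=\nk/2$, with $m\geq 1$ since $\nk>0$ is even. By Lemma~\ref{lem:dimS}, $\dim\cell_\nk(\oo)=\sum_{t=1}^{m}\T(m,t)$. By Lemma~\ref{dimL} with $\delta=\kn\in\N$, $\dim L_\nk^{\delta=\kn}(\oo)=\sum_{t=0}^{\kn}\T(m,t)$, where $\T(m,0)=0$ for $m\geq1$. Hence
\[
\dim\cell_\nk(\oo)-\dim L_\nk^{\delta=\kn}(\oo)=\sum_{t=\kn+1}^{m}\T(m,t).
\]
This vanishes for $\kn\geq m$, so only $\kn=1,\dots,m-1$ contribute. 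Interchanging the order of summation gives $\sum_{\kn\geq1}\sum_{t>\kn}\T(m,t)=\sum_{t=1}^m (t-1)\T(m,t)$. Adding $\dim\cell_\nk(\oo)=\sum_t \T(m,t)$ yields $\sum_t t\,\T(m,t)$, which is the claim.

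For (II), I will bound each exponent $\beta_{\delta_c}$ in (\ref{eq:gram21}) from below. First, $\beta_0\geq\dim\cell_\nk(\oo)$. Every entry $\langle x,y\rangle_{e_{(\oo)}}$ is $\delta^c$ with $c\geq1$, because composing $x^*$ and $y$ through the top row always closes at least one interior component. So every row of the Gram matrix is divisible by $\delta$, and $\delta^{\dim}$ divides the determinant. Second, for $\kn\in\N$ we have $\kappa_{E_0}(\kn)=\kn\neq0$. So Proposition~\ref{pr:hid22}, applied with $A=P^2_\nk$ and the heredity preidempotent $e=E_0$ (using (\ref{eq:EPE}) restricted to $P^2_\nk$ and $E_0^*=E_0$), gives $\beta_\kn\geq\dim\cell_\nk(\oo)-\dim L_\nk^{\delta=\kn}(\oo)$. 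By (I), the sum of these lower bounds equals $\sum_t t\,\T(m,t)$, which by Lemma~\ref{lem:degg} is the total degree of the determinant. Hence all the bounds are equalities and no other roots occur. The exponents vanish for $\kn\geq m$, which gives the product range $1\leq\kn\leq \nk/2-1$.

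The remaining point, and the one I expect to need the most care, is the leading constant $\varkappa$, which must be shown to equal $1$. I would examine the top-degree part of the Laplace expansion. A term indexed by a permutation $\sigma$ of the basis has degree $\sum_x \deg\langle x,\sigma x\rangle$. The number of components of the join of $x$ and $\sigma x$ is at most the number of parts of $x$, with equality iff $\sigma x$ refines $x$. So a top-degree term requires $\sigma x\succeq x$ for all $x$. A permutation of a finite poset satisfying this must be the identity, since the order is antisymmetric and $\sigma$ has finite order. Thus the top coefficient is the product of the diagonal leading coefficients, which is $1$, giving $\varkappa=1$ and hence (\ref{eq:detfact2}).
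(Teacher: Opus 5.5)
Your proposal is correct and follows the paper's proof. Part (I) is the same interchange of summation as in the ordinary case, and part (II) combines $\beta_0\geq\dim\cell_n(\emptyset,\emptyset)$ with the bounds from Proposition~\ref{pr:hid22}, saturated against the degree from Lemma~\ref{lem:degg}. Your Laplace-expansion argument is a worthwhile addition: only the identity permutation contributes in top degree, so the determinant is monic and $\varkappa=1$, which the paper leaves implicit. It also supplies the non-cancellation of top-degree terms that the proof of Lemma~\ref{lem:degg} takes for granted.
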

\begin{proof}
(I) The argument is directly parallel to that for the ordinary case in (\ref{pa:simpleS0}). 
\\
(II) 
First note that, for any two diagram basis  {elements}
$x$ and $y$ of $\cell_\nk(\emptyset,\emptyset)) $ the expression 
$ \langle x, y\rangle_{e_{(\emptyset,\emptyset)}}$ 
is a monomial in $\delta$ of a positive degree. Hence 
$\delta^{\dim \cell_\nk(\emptyset,\emptyset)}$ is a factor of 
$\det (\Gram(\cell_\nk(\emptyset,\emptyset) ))$. 
From (\ref{eq:alphabound2}) 
in Prop.\ref{pr:hid22}
we have that the expression on the right of (\ref{eq:detfact2}) 
must be a factor of the determinant. 
But from 
(\ref{lem:degg}) 
and (I)   
we see that this saturates the bound on degree, giving the identity.
\ignore{{ 
\footnote{
\ppm{[now DELETE the rest???]}\ca{[yes]}
For each $\delta=\kn\geq1$, by Proposition~\ref{dimL=rank} we have 
$\;\dim L_\nk^{\delta=\kn}(\emptyset,\emptyset)
\; =\;  rank( \langle -, -\rangle_{e_{(\emptyset,\emptyset)}}|_{\delta=k})$.
But 
$$
\; rank \langle -, -\rangle_{e_{(\emptyset,\emptyset)}}
=\dim \cell_\nk(\emptyset,\emptyset)-\dim
rad(\cell_\nk(\emptyset,\emptyset)),
$$
\ppm{by ????}.
Hence 
\ppm{[-explain?! - do we need to say that $\C[\delta]$ is a PID so gram matrix has a Smith form with entries in $\C[\delta]$? But evaluating at $k$ the rank is determined by the number of diagonal entries in the Smith form that vanish...]}
$\; (\delta-\kn)^{\dim
rad(\cell_\nk(\emptyset,\emptyset))}$ is also a factor of 
$\det
(\Gram(\cell_\nk(\emptyset,\emptyset) ))$. 
Hence, the degree of $\det
(\Gram(\cell_\nk(\emptyset,\emptyset) ))$ is bounded below by $\dim \cell_\nk(\emptyset,\emptyset)+\sum \dim
rad(\cell_\nk(\emptyset,\emptyset))$, and comparing these two expressions one can see that the bound in fact saturates. } 
}}
\end{proof}



\begin{eg}
\ignore{{ 
\item The 
\ppm{standard-basis [??]}
Gram matrix of the $P^1_2(\delta)$ module $\mathcal{S}(\emptyset)$ is 
$\begin{pmatrix}
\delta^2 & \delta\\
\delta & \delta\\
\end{pmatrix}$ 
and has determinant $\delta^2(\delta-1)$.
\item 
The Gram matrix of the $P^1_3(\delta)$ module $\mathcal{S}(\emptyset)$ is 
$\begin{pmatrix}
\delta^3 & \delta^2 & \delta^2 & \delta^2 & \delta\\
\delta^2 & \delta^2 & \delta & \delta & \delta\\
\delta^2 & \delta & \delta^2 & \delta & \delta\\
\delta^2 & \delta & \delta & \delta^2 & \delta\\
\delta & \delta & \delta & \delta & \delta\\
\end{pmatrix}$ and has determinant $\delta^5(\delta-1)^4(\delta-2)$.
    \item The Gram matrix of the $P^1_4(\delta)$ module $\mathcal{S}(\emptyset)$ is \[
\begin{pmatrix}
\delta^4 & \delta^3 & \delta^3 & \delta^3 & \delta^3 & \delta^3 & \delta^3 & \delta^2 & \delta^2 & \delta^2 & \delta^2 & \delta^2 & \delta^2 & \delta^2 & \delta \\
\delta^3 & \delta^3 & \delta^2 & \delta^2 & \delta^2 & \delta^2 & \delta^2 & \delta^2 & \delta & \delta & \delta^2 & \delta^2 & \delta & \delta & \delta \\
\delta^3 & \delta^2 & \delta^3 & \delta^2 & \delta^2 & \delta^2 & \delta^2 & \delta & \delta^2 & \delta & \delta^2 & \delta & \delta^2 & \delta & \delta \\
\delta^3 & \delta^2 & \delta^2 & \delta^3 & \delta^2 & \delta^2 & \delta^2 & \delta & \delta & \delta^2 & \delta & \delta^2 & \delta^2 & \delta & \delta \\
\delta^3 & \delta^2 & \delta^2 & \delta^2 & \delta^3 & \delta^2 & \delta^2 & \delta & \delta & \delta^2 & \delta^2 & \delta & \delta & \delta^2 & \delta \\
\delta^3 & \delta^2 & \delta^2 & \delta^2 & \delta^2 & \delta^3 & \delta^2 & \delta & \delta^2 & \delta & \delta & \delta^2 & \delta & \delta^2 & \delta \\
\delta^3 & \delta^2 & \delta^2 & \delta^2 & \delta^2 & \delta^2 & \delta^3 & \delta^2 & \delta & \delta & \delta & \delta & \delta^2 & \delta^2 & \delta \\
\delta^2 & \delta^2 & \delta & \delta & \delta & \delta & \delta^2 & \delta^2 & \delta & \delta & \delta & \delta & \delta & \delta & \delta \\
\delta^2 & \delta & \delta^2 & \delta & \delta & \delta^2 & \delta & \delta & \delta^2 & \delta & \delta & \delta & \delta & \delta & \delta \\
\delta^2 & \delta & \delta & \delta^2 & \delta^2 & \delta & \delta & \delta & \delta & \delta^2 & \delta & \delta & \delta & \delta & \delta \\
\delta^2 & \delta^2 & \delta^2 & \delta & \delta^2 & \delta & \delta & \delta & \delta & \delta & \delta^2 & \delta & \delta & \delta & \delta \\
\delta^2 & \delta^2 & \delta & \delta^2 & \delta & \delta^2 & \delta & \delta & \delta & \delta & \delta & \delta^2 & \delta & \delta & \delta \\
\delta^2 & \delta & \delta^2 & \delta^2 & \delta & \delta & \delta^2 & \delta & \delta & \delta & \delta & \delta & \delta^2 & \delta & \delta \\
\delta^2 & \delta & \delta & \delta & \delta^2 & \delta^2 & \delta^2 & \delta & \delta & \delta & \delta & \delta & \delta & \delta^2 & \delta \\
\delta & \delta & \delta & \delta & \delta & \delta & \delta & \delta & \delta & \delta & \delta & \delta & \delta & \delta & \delta \\
\end{pmatrix}
\] and has determinant $\delta^{15}(\delta-1)^{{14}}(\delta-2)^7(\delta-3)$.
}}  
The Gram matrix of the $P^2_6(\delta)$ module $\cell_6(\emptyset,\emptyset)$ is a $31\times 31$ matrix and its determinant is a polynomial of degree $3\cdot15+2\cdot15+1=76$. 
Taking into account the known simple modules 
we thus have
{
$$
\det(
\Gram 
(\cell_6(\emptyset,\emptyset))) \; = \; \delta^{31}(\delta-1)^{30=31-1}(\delta-2)^{15=31-(15+1)}
$$
}

\end{eg}

\begin{prop}\label{simpS}
Fixing the ground field to be $\C$, \ppmm{and fixing $\delta \in \C$,}
\begin{enumerate}
    \item[(I)] if $n$ is even then the $P^2_{\nk}(\delta)$-module $\cell_{\nk}(\emptyset,\emptyset)$ is simple if and only if  $\delta\not\in\{ 
1,2,\dots,\nk/2 -1\}$.
\item[(II)] if $n$ is odd then the $P^2_{\nk}(\delta)$-module $\cell_{\nk}({(1)},\emptyset)$ is simple if and only if  $\delta\not\in\{ 
1,2,\dots,(\nk-1)/2\}$.
\end{enumerate}

    \end{prop}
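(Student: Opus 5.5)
The plan is to read off simplicity directly from the factorisation (\ref{eq:detfact2}) of Lemma~\ref{gramfact}(II), using Prop.~\ref{pr:hid22} to translate singularity of the Gram matrix into non-simplicity, and vice versa. For (I) we take $n$ even, $A=P^2_n(\delta)$ and $e=E_0$, which is a heredity preidempotent with $\kappa_e=\delta$. Fix $\delta_c\in\C^\times$. Prop.~\ref{pr:hid22} then says that $\cell_n(\emptyset,\emptyset)=P^2_nE_0$ is simple if and only if $\Gram^2_n(\emptyset,\emptyset)(\delta_c)$ is non-singular. By (\ref{eq:detfact2}) this happens exactly when $\delta_c$ is not an integer $k\ge1$ with $\dim\cell_n(\emptyset,\emptyset)>\dim L^{\delta=k}_n(\emptyset,\emptyset)$. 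To identify these $k$ we compare Lemma~\ref{dimL} with Lemma~\ref{lem:dimS}:
\[
\dim\cell_n(\emptyset,\emptyset)-\dim L^{\delta=k}_n(\emptyset,\emptyset)=\sum_{t=k+1}^{n/2}\T(\tfrac n2,t).
\]
Every $\T(\frac n2,t)$ with $1\le t\le n/2$ is positive (for example, group the $n$ points into $t$ even blocks). So the difference is nonzero precisely when $k\le n/2-1$, and vanishes for $k\ge n/2$. This gives (I) for all $\delta\ne0$.

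For (II) we take $n$ odd and use $e=E_1$, which is a genuine idempotent for every $\delta$, so Prop.~\ref{pr:hid22} applies uniformly. We relate $\Gram(\cell_n((1),\emptyset))$ to $\Gram(\cell_{n+1}(\emptyset,\emptyset))$ via (\ref{odd-even}). The appropriate reading is entrywise: there is a natural bijection of bases $P^2_nE_1\leftrightarrow P^2_{n+1}E_0$ (join the last top vertex to the bottom block), and under it the $(n+1)$ Gram matrix is $\delta$ times the $n$ Gram matrix. Hence, away from $\delta=0$, the two determinants have the same roots. Applying (I) with $n+1$ in place of $n$, the singular values are $\{1,\dots,(n+1)/2-1\}=\{1,\dots,(n-1)/2\}$. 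At $\delta=0$ we use instead that the odd Gram matrix is obtained from the even one by dividing each entry by $\delta$. Those even entries are monomials of positive degree, with diagonal degree equal to the number of parts. After dividing by $\delta$, each entry has degree equal to (number of parts of the corresponding partition in the $(n+1)$ picture) $-1$. I would check that this quotient matrix is nonsingular at $0$ using a triangularity argument with respect to the refinement order $\prec$: its degree-$0$ entries are where the join of the two partitions is a single block, and I expect this to be unitriangular after suitable ordering. If that fails, I would fall back on the degree count of Lemma~\ref{lem:degg} shifted by $\dim$ to show the $\delta$-exponent is zero.

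The main obstacle is the case $\delta=0$ in (I), where $E_0$ is nilpotent and Prop.~\ref{pr:hid22} does not apply. Here I would try first to construct a surjection onto $\cell_n(\emptyset,\emptyset)$ from a standard module of higher rank whose head is known, generalising the $n=4$ observation that $\cell_4((2),\emptyset)\twoheadrightarrow\cell_4(\emptyset,\emptyset)$. Concretely, I would show that at $\delta=0$ any nonzero $v\in P^2_nE_0$ generates, by exhibiting, for a diagram $x$ of minimal number of parts in the support of $v$, an element $a\in P^2_n$ with $av$ a nonzero multiple of the single-block basis vector. That vector generates the whole module, since $\{\underline n\}\cup\{\underline n'\}$ times $P^2_n$ on the left yields all top-row even partitions. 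If this cyclicity argument works, (I) holds at $\delta=0$ as well; otherwise the statement's $\delta=0$ case would need the convention, mentioned after Figure~\ref{gram}, that $\cell_n(\emptyset,\emptyset)$ is viewed as a quotient of $\cell_n((2),\emptyset)$.
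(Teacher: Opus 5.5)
\textbf{Where your proposal agrees with the paper.} Your treatment of (I) for $\delta\neq0$, and of (II), follows the paper. You use Prop.~\ref{pr:hid22} with $e=E_0$ (respectively $E_1$, which for odd $n$ is idempotent for every $\delta$), the factorisation (\ref{eq:detfact2}), positivity of the $\T(\frac n2,t)$, and the entrywise reading of (\ref{odd-even}).

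One correction in (II): $(\Gram/\delta)(0)$ is not unitriangular in any ordering. For $n+1=4$ it is $J-\mathrm{diag}(0,1,1,1)$, with $J$ the all-ones matrix, which has $13$ nonzero entries. You do not need triangularity, though. By (\ref{eq:detfact2}) the $\delta$-exponent of $\det\Gram(\cell_{n+1}(\oo))$ is exactly $\dim\cell_{n+1}(\oo)$. Hence $\det\Gram(\cell_n((1),\emptyset))$ is a product of factors $(\delta-k)$ with $k\geq1$, and does not vanish at $0$. This is your fallback, and it is the paper's argument.

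\textbf{The gap: (I) at $\delta=0$.} Your reduction is sound: $E_0$ generates, so simplicity is equivalent to every nonzero $v$ being sent by some $a\in P^2_n$ to a nonzero multiple of $E_0$. But you never produce $a$. Picking a diagram $x$ of minimal number of parts in the support of $v$ is not a construction, and you yourself leave open that this case might need a convention.

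The missing idea is the paper's ``non-singular rescaling''. The matrix $\Gram(\cell_n(\oo))/\delta$ equals $\Gram(\cell_{n-1}((1),\emptyset))$ by (\ref{odd-even}), so by the computation you already did for (II) it is non-singular at $\delta=0$.

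To turn this into cyclicity you also need algebra elements that realise the rescaled form. Write $x\vee y$ for the join of top-row partitions. For a basis diagram $y$ with a chosen block $y_1$, let $\tilde y\in\rmP^2_n$ be the flip $y^*$ with its top block $\underline n$ merged with $y_1'$; this is still even, since $n$ and $|y_1|$ are even. Then for every basis diagram $x$,
\[
\tilde y\,x=\delta^{|x\vee y|-1}E_0 .
\]
This holds because the component of $x\vee y$ containing $y_1$ reaches the top, while every other component is closed. So at $\delta=0$, for $v=\sum_x c_x x$, the element $\tilde y\,v$ is the $y$-th entry of $(\Gram/\delta)(0)\,c$ times $E_0$. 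Non-singularity then supplies a $y$ for which this is nonzero.

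With this, your cyclicity argument works uniformly for all even $n$, including $n=2$. It is more elementary than the paper's route. The paper uses the rescaling only to get contravariant self-duality, and then needs the $G$-functor of the idempotent $\oWWW$ (so $n\geq4$) plus a hand check in low rank to exclude non-simple self-dual modules.
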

\begin{proof}
First note that $\cell_\nk(\emptyset/{(1)},\emptyset)$ satisfies the conditions of 
   Prop.\ref{pr:hid22}, unless $\delta=0$ and $n$ even. 
For part (I), using Lemma~\ref{gramfact} 
(putting aside $\delta = 0$ which has a non-singular rescaling, see \ref{pa:delta=0})
the form is singular if and only if   $\delta\in\{1,2,\dots,\nk/2 -1\}$.
\ppmm{For the $\delta = 0$ case the Gram matrix has, as noted 
((\ref{odd-even}) and cf. (\ref{pa:delta=0}) {\em{et seq}}), 
the non-singular rescaling, so the module is isomorphic to its contravariant dual, however since the generating element for the module is not (primitive) idempotent we need to eliminate the possibility of a non-simple such module (for example a module with Loewy structure $a/b/a$). For this we can use the $G$-functor associated to $\oWWW$ (which is idempotent for all $\delta$). The caveat here is 
that this requires $n>2$ and hence $n \geq 4$, however the low-rank case can be dealt with by explicit calculation  
(on the module $\cell_{1}({(1)},\emptyset)$, 
resp. $\cell_{2}(\emptyset,\emptyset)$).}
\\ 
(II) Using Eq.\ref{odd-even} and Lemma~\ref{gramfact}, the form 
$\langle-,-\rangle_{e_{((1),\emptyset)}}$ is singular if and only if $\delta\in\{1,2,\dots,(\nk-1)/2\}$.
\end{proof}


\begin{thm}
\label{th:main}
Fix 
{the ground field to be $\C$, and }
let $\delta\in \mathbb{C}$. 
{
The 2-tonal partition algebras $P^2_\nk(\delta)$ are semisimple for all $\nk \in \N$
if and only if  
$\delta\not\in\N_0$. 
\ppmm{(If $\delta=0$ then $P^2_\nk(\delta)$ is semisimple for $\nk>0$ if and only if  $\nk$ is odd.)}}
\end{thm}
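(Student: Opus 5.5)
The plan is to assemble Theorem~\ref{th:main} from Lemma~\ref{frecip} and Proposition~\ref{simpS}, treating the cases $\delta\notin\N_0$, $\delta\in\N$ and $\delta=0$ separately.

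\emph{The case $\delta\notin\N_0$.} Here $\delta\in\C^\times$, so Lemma~\ref{frecip} applies. By Proposition~\ref{simpS}(I), $\cell_{2n}(\emptyset,\emptyset)$ is simple for every $n$, since the excluded set $\{1,\dots,n-1\}$ lies in $\N$. Likewise, Proposition~\ref{simpS}(II) shows that $\cell_{2n+1}((1),\emptyset)$ is simple for every $n$. Lemma~\ref{frecip}(a) and (b) then give semisimplicity of $P^2_{2n}(\delta)$ and $P^2_{2n+1}(\delta)$ for all $n$. This settles the `if' direction.

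\emph{The case $\delta=k\in\N$.} Choose $n$ even with $n/2-1\geq k$, for instance $n=2k+2$. By Proposition~\ref{simpS}(I), $\cell_n(\emptyset,\emptyset)=P^2_nE_0$ is not simple. Since $k\neq0$, $E_0$ is a heredity preidempotent with $\kappa_{E_0}=k\neq0$. Proposition~\ref{pr:hid22} then says $P^2_nE_0$ is indecomposable projective (up to the normalisation $E_0/k$) but not simple, so $P^2_n(k)$ is not semisimple. Alternatively one can use the `only if' half of Lemma~\ref{frecip}(a). The same reasoning with Proposition~\ref{simpS}(II) shows that the odd ranks $n\geq 2k+1$ also fail, which gives the `eventually non-semisimple' picture, although only one failing $n$ is needed.

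\emph{The case $\delta=0$.} For even $n=2m>0$, (\ref{pa:Wdnil}) shows that $W^{2m}=E_0$ generates a nonzero nilpotent ideal, so $P^2_n(0)$ is not semisimple. This already completes the `only if' direction of the main statement. The parenthetical claim for odd $n$ is the main obstacle, because Lemma~\ref{frecip} assumes $\delta\in\C^\times$. My approach would be to rerun the proof of Lemma~\ref{frecip}(b) at $\delta=0$ for odd ranks. The two inputs it needs still hold:
\begin{itemize}
\item For odd $n$, the index set and heads in (\ref{pa:P2head}) remain valid.
\item The idempotent $W^2_b$ (rather than the pre-idempotent $W^2$) provides the layer structure (\ref{pa:globloc2}), since $W^2_bP^2_nW^2_b\cong P^2_{n-2}$ and $A^2_n$ is semisimple for all $\delta$.
\end{itemize}
Upper-triangularity and the restriction and induction rules are therefore unaffected. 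The base case is simplicity of $\cell_n((1),\emptyset)=P^2_nE_1$ at $\delta=0$. Proposition~\ref{simpS} gives this: by (\ref{odd-even}), its Gram matrix is the nonsingular rescaling of the even one, and the remark in the proof of Proposition~\ref{simpS} handles the primitive-idempotence caveat using $\overline{W}^2_b$. With these in place, the induction on rank in Step~(IV) of Lemma~\ref{frecip} goes through verbatim within odd parity, giving semisimplicity of $P^2_n(0)$ for odd $n$. The step I would check most carefully is that Frobenius reciprocity for the induction built from $G_{W^2_b}$ still behaves correctly on characters at $\delta=0$.
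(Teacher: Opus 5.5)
Your proof of the main equivalence is the paper's proof. You use Lemma~\ref{frecip} with Proposition~\ref{simpS} for $\delta\notin\N_0$. For $\delta=k\in\N$ you use a non-simple spine module; the paper phrases this as comparing Lemma~\ref{dimL} with Lemma~\ref{lem:dimS}, which is the same fact as Proposition~\ref{simpS}(I). For $\delta=0$ you use the nilpotent ideal generated by $E_0$ in even rank.

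For the parenthetical claim, the paper's proof only records that $E_0$ lies in the radical, which is your even-$n$ argument, and calls the rest immediate. So your rerun of Lemma~\ref{frecip}(b) at $\delta=0$ goes beyond the paper. As written, though, it has a genuine gap exactly at the step you flag, and your earlier statement that the induction rules are ``unaffected'' is not justified.

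Step (IV) needs that $\Ind^{n+2}_n\cell_n(\lambda,\lambda')$ has no composition factor of rank below $|(\lambda,\lambda')|-2$. The paper gets its induction rule from $G_{W^2}$, which is unavailable at $\delta=0$: since $W^2W^2=\delta W^2$, the corner $W^2P^2_{n+1}W^2$ then has zero multiplication. Moreover, the two-step induction passes through the even rank $n+1$, where $E_0$ is nilpotent.

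One way to close the gap:
\begin{enumerate}
\item For every $\delta$, bending one strand should give an isomorphism of $(P^2_n,P^2_{n-1})$-bimodules $P^2_{n+1}W^2_b\cong P^2_n$; you would need to check this. The block $\{1',2',3'\}$ of $W^2_b$ acts as a single odd vertex.
\item Hence $\Ind^n_{n-1}\cong\Res^{n+1}_n\circ G_{W^2_b}$, where $W^2_b$ is a genuine idempotent, and $G_{W^2_b}\cell_{n-1}(\rho)=\cell_{n+1}(\rho)$ as in (\ref{pa:Trep}).
\item So $\Ind^{n+1}_n\cell_n(\lambda,\lambda')\cong\Res^{n+2}_{n+1}\cell_{n+2}(\lambda,\lambda')$, which is $\cell$-filtered by (\ref{res:Tonal}).
\item The functor $\Ind^{n+2}_{n+1}$ is only right exact. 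Applied to this filtration, it still bounds the composition factors of $\Ind^{n+2}_n\cell_n(\lambda,\lambda')$ from above by those of the modules $\Res^{n+3}_{n+2}\cell_{n+3}(\rho)$.
\item Those in turn are bounded by the composition factors of standard modules $\cell_{n+2}(\rho')$ with $|\rho'|\geq|(\lambda,\lambda')|-2$. An upper bound is all Step (IV) uses.
\end{enumerate}

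Finally, your base case needs no caveat. For odd $n$ one has $E_1^2=E_1$ and $E_1P^2_nE_1=\C E_1$ for every $\delta$, so Proposition~\ref{pr:hid22} applies at $\delta=0$ with $\kappa_{E_1}=1$. The $\overline{W}^2_b$ remark in the proof of Proposition~\ref{simpS} concerns even $n$ only.
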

\begin{proof}
{By Lemma~\ref{frecip} the algebras $P^2_\nk(\delta)$ 
with $\delta\neq 0$ 
are semisimple for all $\nk \in \N$ if and only if 
the modules}
{$\cell_\nk({\emptyset}/{(1)},\emptyset)$ are simple for all $\nk \in \N$. }
The case of $\delta=0$ is immediate (observe that $E_0$ lies in the radical),
as is the only-if part 
(compare Lemma~\ref{dimL} with \ref{lem:dimS})
so, 
finally, the result follows from Proposition \ref{simpS}. 
\end{proof}

\medskip 

\medskip 

\appendix 

\ignore{{ 
\section{The ordinary partition algebra}  


\mdef  
For a  positive integer $n$, set $\underline{n}:=\{1,2,\dots,n\}$, $\underline{n}':=\{1',2',\dots,n'\}$ and $\underline{n}'':=\{1'',2'',\dots,n''\}$. 
\blue{Let $P_S$ be the set of all set partitions of a set $S$. }
\ppm{[-isn't it better to use the same font as before?]}

\mdef 
Define 
\begin{align*} 
\sigma^+:P_{\underline{n}\cup\underline{n}'}&\to P_{\underline{n}\cup\underline{n}''} \\ 
p&\mapsto \sigma^{+}(p)
\end{align*}
 where $\sigma^{+}(p)$ is a partition of $\underline{n}\cup\underline{n}''$ obtained from $p$ by replacing each $i'$ by $i''$ and fixing all the elements of $\underline{n}.$ 
For example, $\sigma^+(\{\{1,2'\}, \{2\},\{1'\}\})=\{\{1,2''\}, \{2\},\{1''\}\}$. Analogously, define 
 \begin{align*} 
\sigma^-:P_{\underline{n}\cup\underline{n}'}&\to P_{\underline{n''}\cup\underline{n}'} \\ 
p&\mapsto \sigma^{-}(p)
\end{align*}
 where $\sigma^{-}(p)$ is a partition of $\underline{n''}\cup\underline{n}'$ obtained from the partition $p$ by replacing each $i$ by $i''$ and fixing all the elements of $\underline{n}'.$ 
\\ Let $\overline{p}$ be the equivalence relation on a set $S$ obtained from a partition $p$ of $S$. Set $P_n:=P_{\underline{n}\cup\underline{n}'}$. For  $p,q\in P_n$ define $p\circ q$ to be the partition of ${\underline{n}\cup\underline{n}'}$ corresponding to the equivalence relation $(\overline{\sigma^+(p)}\cup_{cl}\overline{\sigma^-(q)})\downharpoonleft_{{\underline{n}\cup\underline{n}'}}$, where $\overline{\sigma^+(p)}\cup_{cl}\overline{\sigma^-(q)}$ is the smallest equivalence relation which contains both $\overline{\sigma^+(p)}$ and $\overline{\sigma^-(q)}$, and $\downharpoonleft_{{\underline{n}\cup\underline{n}'}}$ is restriction of the relation to the set ${{\underline{n}\cup\underline{n}'}}.$ 

Let $K$ be a commutative ring and $\delta\in K$. Then the partition algebra $P_\nk(\delta)$ is a free $K$-module with basis $P_n$ and the multiplication of basis elements is given by $$p.q=\delta^{ \alpha{(p,q)} }p\circ q$$
where $\alpha{(p,q)}=$ number of parts removed in the restriction step, $\downharpoonleft_{{\underline{n}\cup\underline{n}'}}$. 
\blue{[not finished yet CA...]}

}}





\bibliographystyle{amsplain}
\bibliography{bib/local}

\end{document}